\numberwithin{equation}{section}
\newtheorem{theorem}{Theorem}[section]
\newtheorem{lemma}[theorem]{Lemma}
\newtheorem{proposition}[theorem]{Proposition}
\newtheorem{corollary}[theorem]{Corollary}
\newtheorem{question}[theorem]{Question}
\theoremstyle{definition}
\newtheorem{definition}[theorem]{Definition} 
\newtheorem{remark}[theorem]{Remark}
\newtheorem{example}[theorem]{Example}
\newtheorem{acknowledgement}{Acknowledgement}
\newcommand{\K}{\mathbb{K}}
\newcommand{\C}{\mathbb{C}}
\newcommand{\N}{\mathbb{N}}
\newcommand{\R}{\mathbb{R}}
\newcommand{\Q}{\mathbb{Q}}
\newcommand{\Pp}{\mathbb{P}}
\newcommand{\m}{\mathfrak{m}}
\newcommand{\supp}{\text{supp}}
\newcommand{\tuple}[1]{\langle #1\rangle}
\newcommand{\st}{\colon}
\newcommand{\p}{\mathfrak{p}}
\DeclareMathOperator{\rk}{\text{rank}~}
\DeclareMathOperator{\HS}{HS}
\DeclareMathOperator{\HF}{HF}
\DeclareMathOperator{\lk}{link}
\DeclareMathOperator{\Soc}{Soc}
\DeclareMathOperator{\Ass}{Ass}
\DeclareMathOperator{\hesd}{hesd}
\newcommand{\qand}{\quad \mbox{and} \quad}
\newcommand{\qfor}{\quad \mbox{for} \quad}
\newcommand{\qforevery}{\quad \mbox{for every} \quad}
\newcommand{\qforall}{\quad \mbox{for all} \quad}
\newcommand{\qif}{\quad \mbox{if} \quad}
\newcommand{\F}{\mathcal{F}}
\newcommand{\e}{\varepsilon}
\newcommand{\ba}{\mathbf{a}}
\newcommand{\bb}{\mathbf{b}}
\newcommand{\be}{\mathbf{e}}
\begin{document}
 
\title{From points to complexes: a concept of unexpectedness for simplicial complexes}

\author[T. Holleben]{Thiago Holleben}
\address[T. Holleben]
{Department of Mathematics \& Statistics,
Dalhousie University,
6297 Castine Way,
PO BOX 15000,
Halifax, NS,
Canada B3H 4R2}
\email{hollebenthiago@dal.ca}

\keywords{Systems of parameters, Lefschetz properties, Stanley-Reisner rings}
\subjclass[2020]{13E10, 13F55, 05E45, 05E40}

 
\begin{abstract}
    In 2018, Cook, Harbourne, Migliore and Nagel introduced the concept of unexpected hypersurfaces, which connects the study of Lefschetz properties of artinian algebras defined by powers of linear forms, to a family of interpolation problems. 
    In this paper, inspired by the theory of unexpected hypersurfaces, we introduce the concept of unexpected systems of parameters for squarefree monomial ideals. Similarly to the setting of points, we show that the existence of an unexpected system of parameters causes a certain algebra to fail the weak Lefschetz property. We then explore combinatorial interpretations of unexpected systems of parameters, and show that this notion is intrinsically related to the theory of balanced complexes. 

    A consequence of our results is that the theory of Rees algebras turns out to be a powerful tool for studying the existence of systems of parameters satisfying special properties. 
\end{abstract}

\maketitle


\section{Introduction}
   
\subsection{Simplicial complexes in commutative algebra}\label{sub:intro1} In 1975, Stanley~\cite{S1975} showed the $h$-polynomial of a simplicial complex $\Delta$ is the numerator of the Hilbert series of an algebra, which is nowadays called the \emph{face ring} or \emph{Stanley-Reisner ring} of $\Delta$. In view of this observation, several tools from commutative algebra became available for the study of inequalities of $f$- and $h$-vectors of simplicial complexes.
 
The earliest application of the idea above can be found already in~\cite{S1975}, where Stanley proves the Upper Bound Conjecture by applying a characterization of Cohen-Macaulay complexes by Reisner~\cite{R1976}. Since then, several results have been shown relying on this idea, including lower bound theorems~\cite{MN2013} and the necessity part of McMullen's $g$-conjecture~\cite{S80,A2018,PP2020,APP2021}.
A property of Cohen-Macaulay rings that plays a key role in these applications is that for an infinite field $\K$ and an $(d+1)$--dimensional Cohen-Macaulay $\K$--algebra $A$, there exists sequences of elements $\theta_1, \dots, \theta_{d+1}$ such that $A/(\theta_1, \dots, \theta_{d+1})$ is a finite dimensional $\K$--vector space. Such sequences of elements are called \emph{systems of parameters} (abbreviated sop).
For the purpose of the applications mentioned, it is often the case that taking a sop given by general linear forms is enough. One of the main advantages of assuming the elements $\theta_1, \dots, \theta_{d+1}$ are general, is that the same procedure for generating the forms works for arbitrary $(d+1)$--dimensional Cohen-Macaulay rings, even though it is not always the same set of forms. 

It is often the case, however, that one wants to prove finer inequalities for special classes of complexes. In order to work in a more specialized setting, a special sop that carries more information on the combinatorics of the object is usually extremely useful. A famous example of such phenomenon is the study of \emph{balanced complexes}.

In 1979, Stanley~\cite{S1979} introduced the class of $d$-dimensional (Cohen-Macaulay) balanced complexes as the class of (Cohen-Macaulay) complexes whose vertices can be colored by $d+1$ colors such that every maximal face of the complex contains exactly one vertex of each color. This class is sometimes called strongly balanced in the literature. The same property that defines balanced complexes naturally defines a set of linear forms that Stanley showed is a sop for the face ring of the underlying complex. Given a balanced complex $\Delta$, this sop is now known as the \emph{colored sop} (of the face ring) of $\Delta$. Over the years, the colored sop (and its variations) have been extensively applied to prove "balanced" versions of known results that hold in a more general setting for complexes that are for example the boundary complex of some simplicial polytope, or homeomorphic to a sphere~\cite{JKM2018,O2024,CJKMN2018}.

Even though special sops are extremely useful in many areas of combinatorics that use tools from commutative algebra, finding specific sops can be a very hard task. This is especially noticeable in the literature, where one easily finds several tools for computing related invariants of monomial ideals (such as their depth), but results around generating regular sequences are extremely rare~\cite{HM2021,FHM2020}.

In this paper we focus on the problem of determining systems of parameters for Cohen-Macaulay or Gorenstein squarefree monomial ideals satisfying special properties. More specifically, for a Cohen-Macaulay (or Gorenstein) simplicial complex $\Delta$ with Stanley-Reisner ideal $I_\Delta \subset R = \K[x_1, \dots, x_n]$, we study the following question:

\begin{question}\label{q:1}
     Given two integers $a,t > 0$ and a homogeneous polynomial $f \in R$, when can we guarantee the existence (or nonexistence) of a system of parameters $\theta_1, \dots, \theta_{d+1}$ of $I_\Delta$ over a field of characteristic zero such that:
     \begin{enumerate}
        \item $x_1^a, \dots, x_n^a, f \in I_\Delta + (\theta_1, \dots, \theta_{d+1})$, and 
        \item $\deg \theta_1 + \dots + \deg \theta_{d+1} + \deg h_\Delta(x) - d - 1 = t$,
     \end{enumerate}
     where $h_\Delta(x)$ is the $h$-polynomial of $\Delta$.
\end{question}

Throughout this paper, we consider~\cref{q:1} where $f = x_1 + \dots + x_n$ and with the following extra condition on the degree $t$ and $t-1$ graded pieces of the ring $A_\Delta(a) = \frac{R}{I_\Delta + (x_1^a, \dots, x_n^a)}$

\begin{enumerate}
    \item[(3)] $\displaystyle \dim_\K A_\Delta(a)_{t-1} \geq \dim_\K A_\Delta(a)_{t}$.
\end{enumerate}

Condition $(3)$, together with our chosen $f$ makes it so general sops will usually not have properties $(1)$ and $(2)$ from~\cref{q:1}, so that we have to look for special sops. Note that conditions $(1)$, $(2)$ and $(3)$ can be interpreted as: $A = \frac{R}{I_\Delta + (\theta_1, \dots, \theta_{d+1})}$ has a high socle degree by $(2)$, and the standard monomials of $A$ do not have high powers by $(1)$ and $(3)$.

Our motivation for looking at~\cref{q:1} with $f = x_1 + \dots + x_n$ comes from an analogy between the recently developed notion of unexpected hypersurfaces~\cite{CHMN2018} and the monomial setting, which we now explain.

\subsection{Points in projective space, Lefschetz properties and unexpectedness}

Given a set of points $X_1, \dots, X_s$ in projective space and integers $\alpha_1, \dots, \alpha_s > 0$, a classical problem in algebraic geometry is to study hypersurfaces of a given degree passing through $X_i$ with multiplicity at least $\alpha_i$. Algebraically, this problem can be seen as studying the graded pieces of the ideal $I = I_{X_1}^{\alpha_1} \cap \dots \cap I_{X_s}^{\alpha_s}$, where $I_{X_i}$ is the defining ideal of the point $X_i$.

One of the many questions that one can ask regarding the interpolation problem above (and its variations) is the dimension of the space of forms of degree $d$ that vanish at points $X_1, \dots, X_s$ and at an extra general point $P$ with multiplicity $m$. In this case, Cook, Harbourne, Migliore and Nagel~\cite{CHMN2018} introduced the notions of "expected" and "actual" dimensions for this space of forms, and introduced the concept of "unexpectedness" exactly when the expected and actual dimension do not match. More concretely, a set of points $X = \{X_1, \dots, X_s\} \subset \Pp^n$ admits an~\emph{unexpected hypersurface} (or \emph{unexpected curve}, if $n = 2$) of degree $d$ with a general point $P$ of multiplicity $m$ if the actual dimension $\dim (I_X \cap I_P^m)_d$ is strictly larger than the expected dimension $\max (0, \dim (I_X)_d - \binom{m + n - 1}{n})$.

Since the definition of unexpected hypersurfaces in 2018, several connections to other areas of mathematics have been found. The connection most relevant to us was in fact already mentioned in~\cite[Theorem 7.5]{CHMN2018}. In 1995 Emsalem and Iarrobino~\cite{EI1995} showed that in order to study the dimension of graded pieces of ideals of the form $I_{X_1}^{\alpha_1} \cap \dots \cap I_{X_s}^{\alpha_s}$, one may use the theory of inverse systems to study graded pieces of an algebra defined by an ideal generated by powers of linear forms that are dual to the points $X_1, \dots, X_s$ instead. The authors in~\cite{CHMN2018} then use Emsalem and Iarrobino's results to show that the existence of an unexpected curve can be understood from the multiplication maps of an artinian algebra, connecting the notion of unexpected hypersurfaces to the theory of Lefschetz properties.

A standard graded artinian algebra $A$ is said to satisfy the \emph{weak Lefschetz property} (abbreviated WLP) if the multiplication maps $\times L: A_i \to A_{i + 1}$ have full rank for every $i \geq 0$, where $L$ is a general linear form. Moreover, $A$ is said to satisfy the \emph{strong Lefschetz property} (abbreviated SLP) if the multiplication maps $\times L^j: A_i \to A_{i + j}$ have full rank for every $i, j \geq 0$. The theory of Lefschetz properties has its roots in the study of cohomology rings of smooth irreducible complex projective varieties, but has deep implications to combinatorics. As an example, if $A$ satisfies the WLP, then the coefficients $h_i$ of its Hilbert series form a unimodal sequence: there exists an integer $k$ such that $h_0 \leq \dots \leq h_k \geq \dots \geq h_d$, where $h_t = 0$ for every $t \geq d$. The theory of Lefschetz properties of Stanley-Reisner rings is in fact the most important tool in the proof of most of the results mentioned in~\cref{sub:intro1}.

In this paper, we replace the linear forms from~\cite[Theorem 7.5]{CHMN2018} by an artinian monomial ideal of the form $J = I_\Delta + (x_1^{a_1}, \dots, x_n^{a_n})$, where $I_\Delta$ is the Stanley-Reisner ideal of a Cohen-Macaulay complex $\Delta$. In~\cref{p:unexpectedfailure} we show that the existence of a system of parameters $\theta_1, \dots, \theta_{d+1}$ satisfying conditions $(1), (2), (3)$ from~\cref{q:1} implies the algebra defined by $J$ fails the WLP due to surjectivity, and failure is caused exactly by the elements of the inverse system of $I_\Delta + (\theta_1, \dots, \theta_{d+1})$. Because of this failure and~\cite[Theorem 7.5]{CHMN2018}, we may view the inverse systems of such sops as analogues of unexpected hypersurfaces. We call such sops \emph{unexpected systems of parameters} of $\Delta$.

Although at first it is not clear which combinatorial properties of $\Delta$ are related to the existence of unexpected sops, one of our goals is to find families of unexpected sops of special families of complexes. We show the following relation between balanced complexes and unexpected linear sops.

\begin{theorem}[\cref{t:coloredunexpected,c:unexpectedcolored}]\label{t:coloredunexpectedintro}
    Let $\Delta$ be a simplicial complex homeomorphic to a $d$-dimensional sphere, where $d > 0$. Then $\Delta$ has an unexpected linear sop if and only if $\Delta$ is balanced. Moreover, in this case the colored sop of $\Delta$ is the unique unexpected linear sop of $\Delta$ up to equality of ideals.
\end{theorem}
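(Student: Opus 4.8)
The plan is to prove the two implications separately, matching the two referenced results, after first pinning down the parameters $a$ and $t$. Since a linear sop has $\deg\theta_i=1$ for all $i$, condition $(2)$ forces $t=\deg h_\Delta=d+1$ (a $d$-sphere is Gorenstein, so its $h$-vector is symmetric with $h_{d+1}=1$). With $t=d+1$ I would verify that the operative power is $a=2$: the algebra $A_\Delta(2)=R/(I_\Delta+(x_1^2,\dots,x_n^2))$ has the faces of $\Delta$ as monomial basis, so $\dim_\K A_\Delta(2)_k=f_{k-1}(\Delta)$, and the pseudomanifold identity $(d+1)f_d=2f_{d-1}$ gives $\dim A_\Delta(2)_{d}=f_{d-1}\ge f_d=\dim A_\Delta(2)_{d+1}$ exactly when $d\ge 1$, while for $a\ge 3$ the top piece grows and $(3)$ fails. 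Thus an \emph{unexpected linear sop} means a linear system of parameters $\theta_1,\dots,\theta_{d+1}$ such that, in $A=R/(I_\Delta+(\theta))$, one has $x_v^2=0$ for every $v$ and $\sum_v x_v=0$. As a sphere has no ghost vertices, $(I_\Delta)_1=0$, so the last condition is equivalent to $f\in V:=\langle\theta_1,\dots,\theta_{d+1}\rangle$; since $(\theta)$ depends only on $V$, ``uniqueness up to equality of ideals'' is uniqueness of the subspace $V$. Finally, $x_v^2\in I_\Delta+(\theta)$ for all $v$ is equivalent to $A=A_\Delta(2)/(\bar\theta)$.

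For the direction $\Delta$ \emph{balanced} $\Rightarrow$ \emph{an unexpected linear sop exists}, I would take the colored sop $\theta_j=\sum_{v\in C_j}x_v$ of a balanced coloring $[n]=C_1\sqcup\cdots\sqcup C_{d+1}$. By Stanley this is a linear sop, and $f=\sum_j\theta_j\in(\theta)$ gives the $f$-part of $(1)$. For the squares, writing $j=c(v)$ for the color of $v$, one has $x_v\theta_j=x_v^2+\sum_{w\in C_j\setminus v}x_vx_w$; since two distinct vertices of the same color lie in no common face, each $x_vx_w\in I_\Delta$, whence $x_v^2\equiv x_v\theta_j\equiv 0$ in $A$. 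Together with the determination of $t$ and of $(3)$ above, this verifies $(1),(2),(3)$, so the colored sop is unexpected.

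The substance is the converse: $\Delta$ a $d$-sphere with an unexpected linear sop $\Rightarrow$ $\Delta$ balanced and $V=V_{\mathrm{col}}$. The governing rigidity is that $A$ is simultaneously the l.s.o.p.\ quotient (so $\dim A_k=h_k$, and $A$ is artinian Gorenstein of socle degree $d+1$) and a quotient of the squarefree algebra $A_\Delta(2)$. I would aim to show this forces $V$ to have a basis of $0/1$ indicator vectors with disjoint supports partitioning $[n]$, i.e.\ a vertex coloring; the l.s.o.p.\ condition then says that for each facet $F$ the $(d+1)\times(d+1)$ coefficient matrix restricted to $F$ is invertible, which for an indicator basis means $F$ meets each color exactly once, i.e.\ the coloring is balanced, whence $V=V_{\mathrm{col}}$. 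To extract the coloring I would argue by induction on $d$ via vertex links: $\lk_\Delta(v)$ is a $(d-1)$-sphere, and I would use the structure of $x_v$-multiplication on $A$ (governed by the link through the exact sequence $0\to\K[\lk_\Delta v](-1)\xrightarrow{\,x_v\,}\K[\Delta]\to\K[\Delta\setminus v]\to 0$) together with $x_v^2=0$ to descend the unexpected linear sop to one on $\lk_\Delta(v)$, concluding by induction that each link is balanced with a unique coloring; the base case $d=1$ is a cycle, which admits an unexpected linear sop exactly when it is even, i.e.\ balanced. Complementarily, and more explicitly, I would fix a facet $F$, pass to the adapted basis $\theta_v^{(F)}=x_v+\sum_{w\notin F}\gamma_{vw}x_w$ ($v\in F$), and propagate the color assignment across ridges to adjacent facets, using that $\Delta$ is a strongly connected pseudomanifold in which each ridge lies in exactly two facets; the square condition $x_v^2\in I_\Delta+(\theta)$ is what forces each transition to respect one global coloring and the $\gamma_{vw}$ to be $0/1$.

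I expect the main obstacle to be precisely this last step: extracting an honest coloring from the square condition and proving its global consistency. Unlike the easy direction, it genuinely uses that $\Delta$ is a sphere (a pure, strongly connected pseudomanifold) rather than merely Cohen--Macaulay, and the delicate point is showing that the locally adapted bases glue to a single $V$ spanned by indicator vectors without a parity/monodromy obstruction around the cycles of the dual graph---exactly the obstruction that excludes odd cycles in the base case. Once the coloring is produced and shown balanced, uniqueness is immediate, since a connected balanced pseudomanifold has a unique balanced coloring up to permuting colors, and $V_{\mathrm{col}}$ depends only on the induced partition.
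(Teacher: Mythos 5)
Your reduction to $t=d+1$, $a=2$, and your proof of the direction ``balanced $\Rightarrow$ unexpected linear sop'' are correct, and in fact more elementary than the paper's: the identity $x_v\theta_{c(v)} = x_v^2 + \sum_{w\in C_{c(v)}\setminus\{v\}} x_vx_w$ with each $x_vx_w\in I_\Delta$ verifies \ref{i:4} directly, whereas the paper gets it by computing the full Macaulay dual generator of the colored-sop ideal (\cref{l:macaulaydualcolored}, the signed sum of facet monomials over a bipartition of the facet-ridge graph $G(\Delta)$). Your count $(d+1)f_d=2f_{d-1}$ for \ref{i:5} is the same as the paper's.

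The gap is exactly where you predicted it, and it is genuine: in both of your variants for the converse (induction on vertex links, or propagation of an adapted basis across ridges), nothing you propose forces the locally extracted $0/1$ structures to glue into one global coloring; the parity obstruction around cycles of the dual graph is precisely the issue, and strong connectivity of a pseudomanifold alone cannot resolve it. Even if the descent to links could be carried out---and it is not clear how conditions \ref{i:3}/\ref{i:4} descend, since restricting the lsop to $\lk_\Delta(v)$ does not obviously place the sum of the link's variables in the resulting ideal---you would only conclude that every link is balanced, which leaves the same gluing problem one dimension up. The paper closes this hole by a completely different route: by Macaulay duality and \cref{t:sopfailure}, the unexpected sop yields a nonzero element of $\ker\bigl(\times L^T\colon A_\Delta(2)_{d+1}\to A_\Delta(2)_d\bigr)$, i.e.\ failure of surjectivity; the Dao--Nair theorem (\cref{t:daonair}) identifies this failure with bipartiteness of $G(\Delta)$; and Joswig's theorem (\cref{t:bipartitebalanced}) converts bipartiteness of $G(\Delta)$ into balancedness---and it is inside that last result that simple connectivity of the sphere (not just the pseudomanifold structure) kills your monodromy obstruction. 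Note also that your uniqueness argument inherits the gap: uniqueness of the balanced coloring up to permuting colors only shows the colored sop is unique \emph{among colored sops}; to show an arbitrary unexpected linear sop spans $V_{\mathrm{col}}$ you would need your strong form of the converse ($V$ spanned by disjoint indicator vectors), whereas the paper instead observes that $\ker(\times L^T)$ is one-dimensional (the signless incidence matrix of the connected bipartite graph $G(\Delta)$ has corank one) and applies Macaulay duality to equate the ideals (\cref{c:unexpectedcolored}).
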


In order to prove~\cref{t:coloredunexpectedintro} we first combine a result of Joswig~\cite{J2002} on the facet-ridge graph of a complex homeomorphic to a sphere together with a recent result of Dao and Nair~\cite{DN2024} on the Lefschetz properties of special monomial ideals to compute the inverse system of the colored sop of a balanced sphere.

\begin{lemma}[\cref{l:macaulaydualcolored}]
    Let $\Delta$ be a simplicial complex homeomorphic to a $d$-dimensional sphere, where $d > 0$, and assume $\Delta$ is a balanced complex with colored sop $\theta_1, \dots, \theta_{d+1}$. Then the Macaulay dual generator of $I_\Delta + (\theta_1, \dots, \theta_{d+1})$ is given by 
    $$
        \sum_{F \in B_1} x_F - \sum_{G \in B_2} x_G,
    $$
    where $x_F = \prod_{i \in F} x_i$ and $B_1 \cup B_2$ is a bipartition of the facet-ridge graph of $\Delta$. 
\end{lemma}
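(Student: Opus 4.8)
The plan is to compute the Macaulay inverse system directly through the apolarity (contraction) action, using that the relevant quotient is Gorenstein. Since $\Delta$ is homeomorphic to a $d$-sphere, $\K[\Delta] = R/I_\Delta$ is Gorenstein of Krull dimension $d+1$, and the colored sop $\theta_1,\dots,\theta_{d+1}$ is a linear system of parameters; as $\Delta$ is Cohen--Macaulay this is a regular sequence, so $A = R/(I_\Delta + (\theta_1,\dots,\theta_{d+1}))$ is artinian Gorenstein. By Dehn--Sommerville the $h$-vector of $\Delta$ is symmetric with $h_{d+1}=1$, hence $\Soc(A)$ is one-dimensional and concentrated in degree $d+1$. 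By Macaulay duality the inverse system of $A$ is the cyclic module generated by a single form of degree $d+1$, and its top graded piece $M_{d+1}$ is one-dimensional. It therefore suffices to exhibit one nonzero element of $M_{d+1}$ and identify it with the claimed expression.

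To describe $M_{d+1}$, I would use that $M = (I_\Delta + (\theta_1,\dots,\theta_{d+1}))^\perp = I_\Delta^\perp \cap (\theta_1,\dots,\theta_{d+1})^\perp$ under the contraction action of $R$ on its graded dual, where $x_v \circ x_F = x_{F\setminus v}$ when $v\in F$ and $0$ otherwise. Because $I_\Delta$ is a monomial ideal, $I_\Delta^\perp$ is spanned by the monomials supported on faces of $\Delta$; in degree $d+1$ these include the squarefree facet monomials $x_F = \prod_{i\in F} x_i$. Testing a signed sum $g = \sum_F \epsilon_F\, x_F$ over the facets against the remaining generators, and using that $\Delta$ is balanced so that each facet $F$ meets color $i$ in a unique vertex $v_i(F)$, I would compute $\theta_i \circ x_F = x_{F \setminus v_i(F)}$, a ridge of $\Delta$.

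The crux is then combinatorial. The ridge $\rho = F\setminus v_i(F)$ omits exactly one color, namely $i$, and by the pseudomanifold property of a simplicial sphere it lies in exactly two facets $F$ and $F'$, each of which recovers $\rho$ by deleting its color-$i$ vertex. Hence the coefficient of $x_\rho$ in $\theta_i \circ g$ equals $\epsilon_F + \epsilon_{F'}$, and the requirement $\theta_i \circ g = 0$ for all $i$ is exactly the condition $\epsilon_F = -\epsilon_{F'}$ for every pair of facets adjacent in the facet-ridge graph. A solution with values in $\{\pm 1\}$ is precisely a proper $2$-colouring, so I would invoke Joswig's theorem that the facet-ridge graph of a balanced sphere is bipartite: fixing the bipartition $B_1 \sqcup B_2$ and setting $\epsilon_F = +1$ on $B_1$ and $\epsilon_F = -1$ on $B_2$ yields a nonzero $g = \sum_{F\in B_1} x_F - \sum_{G\in B_2} x_G$ in the one-dimensional space $M_{d+1}$, which is therefore the Macaulay dual generator.

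The main obstacle is making this reduction watertight rather than any single hard computation: one must confirm that the socle is one-dimensional and sits in degree $d+1$, so that producing a single explicit element pins down the generator, and one must match the linear forms $\theta_i$ with the edges of the facet-ridge graph through the identification $\theta_i \circ x_F = x_{F\setminus v_i(F)}$, which hinges on each ridge omitting exactly one colour and lying in exactly two facets. The topological content, namely bipartiteness of the facet-ridge graph, is exactly what Joswig's result supplies, while the clean inverse-system description in this monomial setting is what the Dao--Nair machinery makes available; once these are in hand, the identification of $g$ follows at once from the one-dimensionality of $M_{d+1}$.
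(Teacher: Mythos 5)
Your proof is correct, and it reaches the same candidate polynomial as the paper but verifies it by a different mechanism. The paper never computes $\theta_i \circ F$ directly: it first shows $L \circ F = 0$ by identifying the matrix of $\times L^T \colon A_{d+1} \to A_d$ (with $A = A_\Delta(2)$) with the signless incidence matrix of the connected bipartite graph $G(\Delta)$, whose kernel is spanned exactly by the bipartition vector, and then upgrades this to $\theta_i \circ F = 0$ for each $i$ by a contradiction argument: each ridge omits exactly one color, so the monomials appearing in $\theta_i^T(F)$ for distinct $i$ are supported on disjoint sets of ridges and cannot cancel in the sum $\sum_i \theta_i^T(F) = L^T(F) = 0$. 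You instead compute $\theta_i \circ x_F = x_{F \setminus v_i(F)}$ directly and observe that a ridge $\rho$ omitting color $i$ arises from exactly its two containing facets (the pseudomanifold property), so the coefficient of $x_\rho$ is $\epsilon_F + \epsilon_{F'}$, which the bipartition signs kill; your uniqueness step uses $h_{d+1}(\Delta) = 1$ and one-dimensionality of the socle, where the paper instead gets uniqueness of the kernel element from the incidence-matrix theorem. Your route is more self-contained and elementary (it needs neither \cite{H2024}'s identification of $\times L^T$ with the incidence matrix nor the graph-theoretic kernel result), and your per-ridge computation makes the constructive content of the color-separation argument explicit rather than running it by contradiction; what the paper's detour buys is the explicit link between $\times L^T$ and the boundary map of $\Delta$ (see \cref{r:boundary}), and in particular the fact that $\ker \times L^T$ is one-dimensional, which is reused later in \cref{c:unexpectedcolored}. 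Two small quibbles: bipartiteness of $G(\Delta)$ for a balanced sphere is the Klee--Novak half of \cref{t:bipartitebalanced} (Joswig's result is the converse direction), and your closing mention of the Dao--Nair machinery is a red herring --- nothing in your argument uses \cref{t:daonair}; the inverse-system facts you need ($(I+J)^{-1} = I^{-1} \cap J^{-1}$ and the monomial description of $I_\Delta^{-1}$) are classical Macaulay duality.
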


Colored sops are one of two known families of systems of parameters for large families of spheres. The other family is often called the universal sop~\cite{HM2021,AR2023} (see also~\cite{S1990,GS1984,DCEP1982}), and comes from elementary symmetric polynomials. The universal sop was shown to be unexpected in~\cite{H2025B} (see~\cref{t:universalunexpected}).

\subsection{Interpolation problems, half-hollow edgewise subdivisions and analytic spread}

Given an integer $m$, one setting where the interpolation problem has been extensively explored is the study of hypersurfaces passing through a set of points $X_1, \dots, X_s$ with multiplicity at least $m$. This setting is one of the main motivations for the study of \emph{symbolic powers} in commutative algebra. In both settings that are relevant to us, when $I = I_{X_1} \cap \dots \cap I_{X_s}$ and when $I$ is a squarefree monomial ideal, the $m$-th symbolic power of $I$ can be computed by 
$$
    I^{(m)} = \bigcap_{\p \in \Ass(I)} \p^m. 
$$ 
In the monomial setting, it is known (see~\cite{MV2012}) that a sufficient condition to guarantee $I^m \neq I^{(m)}$ is to understand the analytic spread $\ell(I)$ of $I$. For similar connections in the setting of points see for example~\cite{HKZ2022}. In~\cite{H2024,H2025} the author used well known interpretations of the analytic spread of an equigenerated monomial ideal as the rank of a matrix to relate the analytic spread of a squarefree monomial ideal (and hence its symbolic powers) to the study of Lefschetz properties. In view of the analogies mentioned previously between interpolation problems of points and Lefschetz properties, the use of analytic spread techniques to understand Lefschetz properties of monomial ideals should be seen as another aspect of the analogy between the monomial setting and the setting of ideals of points.

Our second goal in this paper, is to explore settings where we can guarantee the nonexistence of unexpected systems of parameters. In this direction, we show in~\cref{p:highermatrices} that for a sop satisfying $(1), (2), (3)$ in~\cref{q:1} to exist, a special squarefree monomial ideal must not have maximal analytic spread. In particular, our results show that the theory of Rees algebras can be a powerful tool for finding special regular sequences of Cohen-Macaulay squarefree monomial ideals. 
The notions we use to prove our results are the incidence complexes introduced in~\cite{H2024}, together with special subcomplexes of subdivisions of simplicial complexes. 

Given a simplicial complex $\Delta$, in 2000 Edelsbrunner and Grayson~\cite{EG2000} introduced a complex called the edgewise subdivision of $\Delta$. This construction has since then appeared in many distinct scenarios and is often viewed as the algebraic version of barycentric subdivision, in part due to its connections to Veronese algebras of face rings~\cite{BR2005,KW2012,BW2009}. In this paper, we show that one can understand the rank of multiplication maps of artinian monomial algebras by understanding the analytic spread of a special subcomplex of edgewise subdivisions of incidence complexes. This complex appeared before in the context of hypergraph theory~\cite{MV2015} (in the case of a simplex). We call this construction the half-hollow edgewise subdivision. 

As a corollary, we are able to exploit topological and combinatorial properties of a complex $\Delta$ to guarantee the nonexistence of unexpected sops.

\begin{corollary}[\cref{c:nonexistence}]
    Let $\Delta$ be a $d$-dimensional Cohen-Macaulay collapsible complex with Stanley-Reisner ideal $I_\Delta$. Then for a field $\K$ of characteristic zero and a fixed $a > 0$, $I_\Delta$ does not admit a sop as in~\cref{q:1}, where $t > d(a-1)$.
\end{corollary}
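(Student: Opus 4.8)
The statement is a corollary of \cref{p:highermatrices}, so the plan is to show its hypothesis can never be met under the present assumptions: I will prove that the relevant special squarefree monomial ideal \emph{does} have maximal analytic spread, whence by the contrapositive of \cref{p:highermatrices} no system of parameters as in \cref{q:1} can exist. By the correspondence the paper sets up between analytic spread and the rank of multiplication maps, maximal analytic spread of this ideal is equivalent to the surjectivity of $\times f$ from the degree $t-1$ to the degree $t$ piece of $A_\Delta(a)$, where $f = x_1 + \dots + x_n$. So the concrete goal is to prove this surjectivity.

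The first step is a support reduction, and this is exactly where the hypothesis $t > d(a-1)$ enters. Every standard monomial of $A_\Delta(a)$ of degree $t$ has the form $\prod_{i \in G} x_i^{b_i}$ with $G \in \Delta$, $1 \le b_i \le a-1$, and $\sum_i b_i = t$; since each exponent is at most $a-1$ we get $t \le |G|(a-1)$, so $|G| \ge t/(a-1) > d$ and therefore $|G| = d+1$, i.e.\ $G$ is a facet of $\Delta$. Thus in the top range $t > d(a-1)$ every degree $t$ monomial is supported on a facet, and the half-hollow edgewise subdivision governing this graded piece is assembled solely from the facets of $\Delta$ and their ridges. In particular the incidence combinatorics in this range is controlled entirely by the facet–ridge structure of $\Delta$.

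The second step translates the desired surjectivity into a topological vanishing. In the incidence-complex formalism the matrix of $\times f$ from degree $t-1$ to degree $t$ is the top incidence map of the half-hollow edgewise subdivision, so its cokernel is measured by a single reduced (relative) homology group of that subdivision in top degree. The third step then uses collapsibility to make this group vanish. Since $\Delta$ is collapsible it is contractible, hence acyclic, and the solid edgewise subdivision, being a triangulation of the same underlying space, is likewise acyclic; because we work in characteristic zero, rank equals rational rank, so the vanishing of the relevant rational homology group of the hollow subdivision is precisely the full-rank statement we need, and it follows from acyclicity of the solid subdivision via the long exact sequence of the pair. Collapsibility moreover gives an explicit certificate: a collapsing sequence of $\Delta$ orders its facets so that each is attached along a free, collapsible union of ridges, and peeling the facets off in this order exhibits, for each new degree $t$ monomial on an incoming facet, a pivot under $\times f$ supported on the free attaching ridge, so no degree $t$ monomial is left unhit.

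I expect the genuine obstacle to lie in the second step: identifying precisely which homology group of the half-hollow edgewise subdivision is the cokernel of $\times f$, and checking that the half-hollow truncation shifts the relevant degree so that it is acyclicity (equivalently, collapsibility) of $\Delta$ that is being invoked, rather than some lower-dimensional condition. The reduction to \cref{p:highermatrices} and the support reduction are formal; the real content is that contractibility of $\Delta$ forces the top incidence matrix of the subdivision to have full rank over a field of characteristic zero, and one must track the truncation carefully so as not to over- or under-use the topological hypothesis.
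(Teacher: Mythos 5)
There is a genuine gap, and it sits exactly where you suspected: in your second and third steps. You claim that the matrix of $\times L\colon A_\Delta(a)_{t-1}\to A_\Delta(a)_{t}$ is the ``top incidence map'' of the half-hollow edgewise subdivision, so that its cokernel is a reduced (relative) homology group, which you then kill via collapsible $\Rightarrow$ contractible $\Rightarrow$ acyclic. But this matrix is a \emph{signless} vertex--facet incidence matrix (the log matrix of $\F(\hesd(\Delta(d),a-1))$, by \cref{p:highermatrices}); it is not a boundary map of any chain complex, and its rank is not a homotopy or homology invariant. The simplest counterexample: an even cycle and an odd cycle are both homeomorphic to $S^1$, yet the signless incidence matrix of the odd cycle has full rank while that of the even cycle drops rank by one. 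Inside the paper this same phenomenon is \cref{t:daonair}, where surjectivity at the top of $A_\Delta(2)$ is governed by bipartiteness of the facet--ridge graph, a condition invisible to homology. Indeed, if acyclicity alone forced full rank, you would have answered \cref{q:nonexistence}, which the paper explicitly leaves open. The paper uses collapsibility not through its topological consequences but combinatorially: the collapsing sequence orders rows and columns so that the log matrix becomes upper triangular with nonzero diagonal (\cref{t:collapsiblespread}), and that explicit pivoting, not acyclicity, is what yields maximal analytic spread.

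Your backup ``explicit certificate'' is the right idea---it is essentially the paper's argument---but it is pitched at the wrong degree: when $t-1>d(a-1)$, every nonzero monomial of degree $t-1$ is supported on a full facet with all exponents positive, so there are no degree $t-1$ monomials ``supported on the free attaching ridge'' to serve as pivots, and \cref{p:highermatrices} itself only identifies the multiplication matrix with a log matrix of $\F(\hesd(\Delta(d),a-1))$ at the single degree $d(a-1)\to d(a-1)+1$. The paper's route is to prove surjectivity only at that one degree (via \cref{p:highermatrices}, \cref{l:analyticspreadrank} and \cref{t:collapsiblespread}) and then invoke the standard propagation fact that once $\times L\colon (R/J)_i\to (R/J)_{i+1}$ is surjective, all higher multiplication maps are surjective as well~\cite[Proposition 2.1]{MMN2011}; combined with \cref{p:unexpectedfailure}, an unexpected sop of total degree $t>d(a-1)$ would force non-surjectivity in degree $t-1\to t$, a contradiction. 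Your write-up is missing both the correct pivot degree and this propagation step, and the homological shortcut you propose in their place cannot be repaired.
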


\section{Preliminaries}

\subsection{Simplicial complexes}
A \emph{simplicial complex} on vertex set $V$ is a collection of subsets $\Delta$ of $V$ such that $\tau \subset \sigma \in \Delta$ implies $\tau \in \Delta$. Elements of $\Delta$ are called \emph{faces}, maximal faces are called \emph{facets} and we write $\Delta = \tuple{F_1, \dots, F_s}$ when the facets of $\Delta$ are $F_1, \dots, F_s$. The \emph{dimension} of a face $\sigma \in \Delta$ is $\dim \sigma = |\sigma| - 1$ and the dimension of $\Delta$ is $\dim \Delta = \max(\dim \sigma \st \sigma \in \Delta)$, $1$-dimensional complexes are called \emph{graphs}. We say $\Delta$ is \emph{pure} if every facet of $\Delta$ has the same dimension. Faces of dimension $0$ are called \emph{vertices}, dimension $1$ faces are called \emph{edges} and for a pure $d$-dimensional complex, the $(d-1)$--faces are called \emph{ridges}. The $f$-vector of $\Delta$ is the sequence $f(\Delta) = (f_0(\Delta), \dots, f_d(\Delta))$, where $f_i(\Delta)$ denotes the number of $i$-dimensional faces of $\Delta$. The $h$-vector $(h_0(\Delta), \dots, h_{d+1}(\Delta))$ of a $d$-dimensional complex $\Delta$ is the vector obtained from $f(\Delta)$ by the polynomial relations 
$$ 
    \sum_{i = 0}^{d + 1} f_{i - 1}(\Delta)(x - 1)^{d + 1 - i} = \sum_{i = 0}^{d + 1} h_i(\Delta) x^{d + 1 - i}.
$$
We similarly define the $h$-polynomial $h_\Delta(x)$ of $\Delta$ by taking the $h_i$ to be the coefficient of $x^i$. If it is clear from the context, we will ommit $\Delta$ when mentioning entries of the $f$- and $h$-vectors of $\Delta$. Given a face $\sigma \in \Delta$, the \emph{link} of $\sigma$ in $\Delta$ is the complex $\lk_\Delta(\sigma) = \{\tau \in \Delta \st \tau \cup \sigma \in \Delta \mbox{ and } \tau \cap \sigma = \emptyset\}$. A simplicial complex $\Delta$ is \emph{Cohen-Macaulay} over a field $\K$ if for every $\sigma \in \Delta$ it satisfies 
$$
    \dim \tilde H_i(\lk_\Delta(\sigma); \K) = 0 \qforall i < \dim \lk_\Delta(\sigma),
$$
where $\tilde H_i(-; \K)$ denotes reduced simplicial homology with coefficients in $\K$. Note that a graph is Cohen-Macaulay if and only if it is connected. A simplicial complex $\Delta$ is a \emph{pseudomanifold} if it satisfies the following properties:

\begin{enumerate}
    \item $\Delta$ is pure,
    \item $\Delta$ is \emph{strongly connected}, in other words, for any two facets $F, G$ of $\Delta$ there exists a sequence of facets $F_1, \dots, F_s$ such that $F_1 = F$, $F_s = G$ and $F_{i} \cap F_{i + 1}$ is a ridge of $\Delta$ for every $i$, and 
    \item every ridge of $\Delta$ is contained in at most $2$ facets of $\Delta$.
\end{enumerate}
When $\Delta$ is a pseudomanifold, the simplicial complex $\partial \Delta$ with the facets being the ridges of $\Delta$ contained in only one facet of $\Delta$ is called the \emph{boundary complex} of $\Delta$. If $\partial \Delta = \emptyset$ we say $\Delta$ is a pseudomanifold without boundary, and with boundary otherwise. A $d$-dimensional pseudomanifold without boundary $\Delta = \tuple{F_1, \dots, F_s}$ is \emph{orientable over a field $\K$} if $\tilde H_d(\Delta; \K) = \K$. In this case, an element $\e = \e_1 F_1 + \dots + \e_s F_s \neq 0$ in $\tilde H_d(\Delta; \K)$ is called an \emph{orientation} of $\Delta$. Examples of orientable pseudomanifolds are \emph{simplicial spheres}, that is, simplicial complexes homeomorphic to spheres. 

A simplicial complex $\Delta$ is a \emph{$\K$-homology sphere} if
$$
    \tilde H_i(\lk_\Delta(\sigma); \K) \cong \begin{cases}
        \K \qif i = \dim \lk_\Delta(\sigma) \\
        0 \quad \mbox{otherwise}
    \end{cases} \qforevery \sigma \in \Delta.
$$
Every simplicial sphere is an example of a $\K$-homology sphere. Other examples include for example triangulations of the Poincaré sphere, which satisfies all the homology conditions but has a nontrivial fundamental group and hence can not be homeomorphic to a sphere.

Given a $d$-dimensional pseudomanifold without boundary $\Delta$ on vertex set $V$, the \emph{facet-ridge graph} of $\Delta$ is the graph $G(\Delta)$ with vertex set the facets of $\Delta$, and two facets $F_1, F_2$ are adjacent in $G(\Delta)$ if and only if $F_1 \cap F_2$ is a ridge of $\Delta$. A coloring of $\Delta$ on $n$ colors is a function $\rho: V \to [n]$ such that if $\{u, v\} \in \Delta$, then $\rho(u) \neq \rho(v)$. If the $1$-skeleton of $\Delta$ is $(d+1)$-colorable we say $\Delta$ is a \emph{balanced complex}. 
We will need the following combination of a result of Klee and Novak~\cite{KN2016}, and a result of Joswig~\cite{J2002} (see also~\cite[p. 22]{KNN2016}). We include a proof of this statement for the sake of completeness, and refer the reader to~\cite{KN2016} for the definition of normal pseudomanifolds that will only appear in the proof below.

\begin{theorem}[{\cite[Lemma 6.2 1.]{KN2016},\cite[Proposition 11]{J2002}}]\label{t:bipartitebalanced}
    Let $\Delta$ be a balanced $\K$-homology sphere. Then the facet-ridge graph of $\Delta$ is bipartite. 
    Moreover, if $\Delta$ is a simplicial sphere, then $\Delta$ is balanced if and only if the facet-ridge graph of $\Delta$ is bipartite.
\end{theorem}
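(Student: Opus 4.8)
I would prove the two statements by separate methods: the first (balanced $\K$-homology sphere $\Rightarrow$ bipartite) via a fundamental-cycle/orientation computation in the color-ordering, and the second (simplicial sphere: balanced $\iff$ bipartite) via Joswig's group of projectivities, reducing bipartiteness to the parities of the links of $(d-2)$-faces.

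\textbf{Balanced homology spheres are bipartite.} First I would record that a $d$-dimensional $\K$-homology sphere $\Delta$ is a pseudomanifold without boundary: the link of a ridge is a $\K$-homology $0$-sphere, so each ridge lies in exactly two facets. It is also strongly connected and orientable, so $\tilde H_d(\Delta;\K)\cong\K$ is generated by a fundamental cycle $\mu=\sum_F a_F[F]$ in which every $a_F$ is nonzero (if one coefficient vanished, the edge relations derived below together with strong connectivity would force $\mu=0$). Since $\Delta$ is balanced, each facet $F$ contains exactly one vertex of each color $1,\dots,d+1$, so I orient $F$ by listing its vertices in increasing order of color. With this choice the ridge obtained by deleting the color-$i$ vertex of $F$ is itself color-ordered and occurs in $\partial[F]$ with sign $(-1)^{i-1}$. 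The crucial observation is that if $F$ and $F'$ are adjacent in $G(\Delta)$ with shared ridge $R$, then the vertex of $F$ and the vertex of $F'$ lying outside $R$ both carry the unique color $k$ missing from $R$; hence $R$ occurs in both $\partial[F]$ and $\partial[F']$ with the \emph{same} sign $(-1)^{k-1}$. Reading off the coefficient of $R$ in $\partial\mu=0$ then forces \(a_F=-a_{F'}\) along every edge of $G(\Delta)$. Propagating this around a cycle of length $\ell$ gives \(a_{F}=(-1)^{\ell}a_{F}\), so $\ell$ is even (using $\operatorname{char}\K\neq 2$, which is automatic for a $\K$-homology sphere in odd characteristic since orientability holds there); thus $G(\Delta)$ has no odd cycle and is bipartite, the two sign classes of $\mu$ realizing the bipartition.

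\textbf{Bipartite simplicial spheres are balanced.} For the converse I would set up Joswig's \emph{group of projectivities}. Fixing a base facet $F_0$, each edge $F\!-\!F'$ of $G(\Delta)$ with shared ridge $R$ defines the perspectivity $F\to F'$ fixing the $d$ vertices of $R$ and sending the vertex of $F$ outside $R$ to that of $F'$ outside $R$; composing perspectivities along a facet-ridge loop based at $F_0$ yields a permutation of the vertices of $F_0$, and these generate a subgroup $\Pi\le S_{d+1}$. The easy step is that $\Delta$ is balanced if and only if $\Pi$ is trivial: a balanced coloring is preserved by every perspectivity (the two apex vertices across a ridge share the missing color), so $\Pi$ acts as the identity; conversely, if $\Pi$ is trivial the path-independent perspectivities transport the labels of $F_0$ to a well-defined color on every vertex, which is a balanced coloring.

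\textbf{Reducing to small loops, and the main obstacle.} Next I would compute the projectivity around the link of a $(d-2)$-face $\tau$. In a simplicial sphere $\lk_\Delta(\tau)$ is a $1$-sphere, i.e.\ an $m$-cycle, and the facets containing $\tau$ form a cycle of length $m$ in $G(\Delta)$; tracking the two vertices outside $\tau$ once around this cycle shows the resulting projectivity fixes the vertices of $\tau$ and acts on the remaining two positions as a transposition when $m$ is odd and as the identity when $m$ is even. The topological heart, and the step I expect to be the main obstacle, is the claim that these \emph{small} projectivities generate all of $\Pi$; this is exactly where simple connectivity of the sphere enters (for $d\ge 2$) and is the content of Joswig's proposition. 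Granting it, the conclusion is immediate: if $G(\Delta)$ is bipartite then every cycle—in particular each small loop around a $(d-2)$-face—has even length, so each generating projectivity is the identity, whence $\Pi$ is trivial and $\Delta$ is balanced. Finally I would dispose of $d=1$ directly, since a $1$-sphere is itself a single cycle whose facet-ridge graph is that same cycle, bipartite precisely when its length is even, which is exactly when the two-coloring of its vertices exists.
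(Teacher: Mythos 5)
Your proposal follows essentially the same route as the paper: the paper disposes of the first claim by citing \cite{KN2016}*{Lemma 6.2(1)} (whose proof is exactly your color-ordered fundamental-cycle argument) and of the second by citing \cite{J2002}*{Proposition 11} after handling $d=1$ directly and observing that bipartiteness of $G(\Delta)$ passes to the facet-ridge graphs of links; you reprove the former from scratch and unpack the projectivity machinery behind the latter, but, like the paper, you defer the genuinely topological step (that the projectivities around codimension-$2$ faces generate the whole group of projectivities of a simply connected manifold) to Joswig. So structurally the two proofs match, and your second part -- the $d=1$ reduction, the ``balanced $\iff$ trivial group of projectivities'' equivalence, and the parity computation around a $(d-2)$-face -- is sound.

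There is, however, one genuine gap: the first claim is stated for an arbitrary field $\K$, and your fundamental-cycle argument collapses when $\operatorname{char}\K = 2$. In that case the edge relation $a_F = -a_{F'}$ carries no information (since $-1=1$), so no parity obstruction on cycles of $G(\Delta)$ is produced; your parenthetical remark that $\operatorname{char}\K \neq 2$ ``is automatic \dots in odd characteristic'' is circular and does not address this case at all. The fix is a standard Bockstein argument: a connected closed normal pseudomanifold always satisfies $H_d(\Delta;\mathbb{F}_2)\cong\mathbb{F}_2$, and if $\Delta$ were non-orientable over $\Z$, the long exact sequence associated to $0 \to \Z \xrightarrow{\times 2} \Z \to \Z/2 \to 0$ (together with $H_d(\Delta;\Z)=0$) would embed this $\mathbb{F}_2$ into the $2$-torsion of $H_{d-1}(\Delta;\Z)$, forcing $H_{d-1}(\Delta;\mathbb{F}_2)\neq 0$ and contradicting the hypothesis that $\Delta$ is an $\mathbb{F}_2$-homology sphere. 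Hence a characteristic-$2$ homology sphere is $\Z$-orientable, and you can run your sign argument on an integral fundamental cycle instead. This same fact is what makes the paper's appeal to Klee--Novak legitimate in characteristic $2$, since their lemma assumes orientability; your write-up just needs this one extra step to cover the full statement.
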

 
\begin{proof}
    The first part of the statement follows directly from~\cite[Lemma 6.2.1]{KN2016} since homology spheres are orientable normal pseudomanifolds. 
    
    For the second part of the statement, notice first that if $d = 1$, then both $\Delta$ and $G(\Delta)$ are cycle graphs on the same number of vertices, and so the result holds. Assume now that $d > 1$ and notice that if $\Delta$ is a simplicial $d$-sphere, by definition it is homeomorphic to a $d$-sphere and hence simply connected. Assuming the facet-ridge graph of $\Delta$ is bipartite, we conclude the facet-ridge graph of $\lk_\Delta(\sigma)$ is bipartite for every $\sigma \in \Delta$. The result then follows directly by~\cite[Proposition 11]{J2002}. 
\end{proof}

For a simplicial complex $\Delta = \tuple{F_1, \dots, F_s}$ with vertex set $[n]$, we may associate to it two ideals:

\begin{enumerate}
    \item The \emph{Stanley-Reisner ideal} of $\Delta$, denoted by $I_\Delta = (\prod_{i \in \sigma} x_i \st \sigma \not \in \Delta) \subset \K[x_1, \dots, x_n]$, and
    \item the \emph{facet ideal} of $\Delta$, denoted by $\F(\Delta) = (\prod_{i \in F_1} x_i, \dots, \prod_{i \in F_s} x_i) \subset \K[x_1, \dots, x_n]$. 
\end{enumerate}

The following result of Stanley~\cite{S1996B} will be useful throughout this paper.

\begin{theorem}[{\cite[Theorem II.5.1]{S1996B}}]
    Let $\Delta$ be a simplicial complex on vertex set $V$ such that $\Delta$ is not a cone, $\K$ a field and $I_\Delta \subset R = \K[x_v \st v \in V]$ its Stanley-Reisner ideal. The following are equivalent
    \begin{enumerate}
        \item $\Delta$ is a Cohen-Macaulay orientable pseudomanifold (without boundary) over $\K$,
        \item $\Delta$ is a $\K$-homology sphere, and 
        \item $R/I_\Delta$ is Gorenstein.
    \end{enumerate}
\end{theorem}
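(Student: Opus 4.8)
The plan is to establish the two equivalences $(1) \Leftrightarrow (2)$ and $(2) \Leftrightarrow (3)$ separately, treating the first by translating the topological hypotheses into homological ones via Reisner's criterion, and the second by the theory of canonical modules together with Hochster's formula for local cohomology. Throughout, the key observation is that the vanishing clause in the definition of a $\K$-homology sphere, namely $\tilde{H}_i(\lk_\Delta(\sigma);\K) = 0$ for $i < \dim \lk_\Delta(\sigma)$ and all $\sigma$, is exactly Reisner's criterion, so condition $(2)$ already contains ``$\Delta$ is Cohen-Macaulay''. Hence for $(1) \Leftrightarrow (2)$ it remains only to match the pseudomanifold, boundary, and orientability data against the top-degree homology $\tilde{H}_{\dim \lk_\Delta(\sigma)}(\lk_\Delta(\sigma);\K)$.

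First I would prove $(2) \Rightarrow (1)$. Cohen-Macaulayness gives purity, and purity together with the connectivity of links of codimension-two faces (which are $1$-dimensional with vanishing $\tilde{H}_0$) yields strong connectedness. For a ridge $\tau$, the link $\lk_\Delta(\tau)$ is $0$-dimensional, so the homology sphere condition $\tilde{H}_0(\lk_\Delta(\tau);\K) \cong \K$ forces $\lk_\Delta(\tau)$ to consist of exactly two vertices; thus every ridge lies in exactly two facets and $\Delta$ is a pseudomanifold without boundary. Taking $\sigma = \emptyset$ gives $\tilde{H}_d(\Delta;\K) \cong \K$, i.e.\ orientability. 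For the converse $(1) \Rightarrow (2)$, I would show that each $\lk_\Delta(\sigma)$ is again a Cohen-Macaulay orientable pseudomanifold without boundary and conclude by induction on dimension. The pseudomanifold-without-boundary property of links is combinatorial: a ridge of $\lk_\Delta(\sigma)$ has the form $\tau$ with $\sigma \cup \tau$ a ridge of $\Delta$, hence lies in exactly two facets. Orientability of links is the crucial point: since $\Delta$ is strongly connected and boundaryless, any orientation $\e = \sum_i \e_i F_i$ has all coefficients $\e_i$ nonzero (adjacent facets across an interior ridge force $\e_i = \pm \e_j$), and restricting $\e$ to the facets through $\sigma$ produces a nonzero $d$-cycle of $\lk_\Delta(\sigma)$. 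This gives $\tilde{H}_{\dim \lk_\Delta(\sigma)}(\lk_\Delta(\sigma);\K) \cong \K$ for every $\sigma$, which together with Reisner's criterion is precisely $(2)$.

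For $(2) \Leftrightarrow (3)$ I would first reduce to the case of an infinite field: homology over $\K$ and over its algebraic closure have the same dimensions, and Gorensteinness is preserved under the faithfully flat base change $R/I_\Delta \to \overline{\K}[\Delta]$, so neither condition changes. Writing $A = R/I_\Delta$ with $\dim A = d+1$, Gorensteinness is equivalent to $A$ being Cohen-Macaulay with canonical module $\omega_A$ isomorphic to a shift of $A$. The canonical module is the graded $\K$-dual of the top local cohomology $H^{d+1}_\m(A)$, whose fine-graded pieces are computed by Hochster's formula in terms of the top reduced homologies $\tilde{H}_{\dim \lk_\Delta(\sigma)}(\lk_\Delta(\sigma);\K)$ of the links. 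The homology sphere condition makes every such space equal to $\K$; I would verify that, under the hypothesis that $\Delta$ is not a cone, this forces $\omega_A$ to have a single generator, hence $A$ Gorenstein, and that conversely the cyclicity of $\omega_A$ together with the non-cone hypothesis forces each top homology to be one-dimensional and nonzero, i.e.\ condition $(2)$.

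The main obstacle is this last translation in $(2) \Leftrightarrow (3)$: extracting from ``$\omega_A$ is a shift of $A$'' the statement that \emph{every} link (not merely $\Delta$ itself) has one-dimensional top homology. This requires careful bookkeeping of the $\Z^n$-grading in Hochster's formula and genuine use of the non-cone hypothesis, which removes the free polynomial variables that would otherwise let a cone over a homology sphere be Gorenstein without being a homology sphere. A secondary subtlety is the claim in $(1) \Rightarrow (2)$ that an orientation has all facet coefficients nonzero; this is where strong connectedness and the boundaryless condition are used essentially, and it is what allows an orientation of $\Delta$ to restrict to an orientation of each link.
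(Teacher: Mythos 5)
The paper gives no proof of this statement at all --- it is quoted directly from Stanley's book \cite[Theorem II.5.1]{S1996B} --- so your attempt can only be judged against the standard argument. Your treatment of $(1) \Leftrightarrow (2)$ is essentially correct and complete: the link of a ridge having $\tilde H_0 \cong \K$ forces exactly two facets per ridge; contracting an orientation class to a face gives a nonzero top-dimensional cycle on the link; and the one sentence you omit (that a strongly connected pseudomanifold without boundary has top homology of dimension \emph{at most} one, by propagating a coefficient facet-to-facet across ridges) is available from ingredients you already have, since Cohen--Macaulayness makes every link strongly connected. One small slip: connectivity of links of codimension-two faces alone does not yield strong connectedness (a disjoint union of two spheres satisfies it); you need connectivity of $\lk_\Delta(\sigma)$ for all faces of dimension at most $d-2$ \emph{including} $\sigma = \emptyset$, which Reisner's criterion does supply.

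The genuine gap is in $(2) \Leftrightarrow (3)$, and it sits exactly at the step you defer. Hochster's formula computes only the $\Z^n$-graded \emph{Hilbert function} of $H^{d+1}_{\m}(A)$, hence of $\omega_A$. When $\Delta$ is a homology sphere this shows that $\omega_A$ and $A$ have the same fine Hilbert function, but that does not imply $\omega_A \cong A$: the number of generators of $\omega_A$ (the Cohen--Macaulay type) is $\dim_\K \omega_A/\m\omega_A$, which depends on the module structure of $\omega_A$ and is not determined by its Hilbert function. So no amount of "careful bookkeeping of the $\Z^n$-grading" in Hochster's local cohomology formula can close this; an extra input is required. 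The standard ways to finish are: (i) switch to Hochster's formula for $\operatorname{Tor}$, which expresses the type as $\sum_{W \subseteq V} \dim_\K \tilde H_{|W|-n+d}(\Delta_W;\K)$, and then kill every term with $W \subsetneq V$ by Alexander duality inside the homology sphere (using that $\|\Delta\| \setminus \|\Delta_W\|$ deformation retracts onto $\|\Delta_{V \setminus W}\|$), leaving only $\dim_\K \tilde H_d(\Delta;\K) = 1$; (ii) invoke Gr\"abe's explicit description of the $\Z^n$-graded module structure of $\omega_{\K[\Delta]}$; or (iii) for $(3) \Rightarrow (2)$, localize at the face primes, which identifies localizations of $\K[\Delta]$ with (localizations of polynomial extensions of) face rings of links, so that Gorensteinness passes to links and one can induct, with the non-cone hypothesis ruling out free vertices. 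None of these ideas appears in your outline, and your closing paragraph concedes as much. As written, you have proved $(1) \Leftrightarrow (2)$, but for $(2) \Leftrightarrow (3)$ you have restated the theorem as a plan rather than proved it.
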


\begin{example}\label{ex:complex}
    Let $I = (x_1 x_2, x_3 x_4, x_5 x_6) \subset \K[x_1,\dots, x_6]$. Then $I = I_\Delta$, where $\Delta$ is the boundary of a $2$-dimensional crosspolytope pictured below. The complex $\Delta$ is a $2$-dimensional simplicial sphere (and in particular an orientable pseudomanifold), its facet-ridge graph is the $1$-skeleton of a cube. In particular, the facet-ridge graph of $\Delta$ is bipartite and by~\cref{t:bipartitebalanced} we know $\Delta$ is balanced. A coloring $\chi$ of $\Delta$ is given by 
    $$
        \chi(i) = \begin{cases}
            1 \qif i \in \{1,2\} \\
            2 \qif i \in \{3,4\} \\
            3 \qif i \in \{5,6\}.
        \end{cases}
    $$
    \begin{center}

\tikzset{every picture/.style={line width=0.75pt}} 

\begin{tikzpicture}[x=0.75pt,y=0.75pt,yscale=-1,xscale=1]

\draw  [fill={rgb, 255:red, 128; green, 128; blue, 128 }  ,fill opacity=1 ] (136,47) -- (171,123) -- (101,123) -- cycle ;
\draw    (136,47) -- (122.83,109) ;
\draw [color={rgb, 255:red, 0; green, 0; blue, 0 }  ,draw opacity=1 ][fill={rgb, 255:red, 0; green, 0; blue, 0 }  ,fill opacity=1 ]   (136,47) -- (147.83,107) ;
\draw [shift={(136,47)}, rotate = 78.84] [color={rgb, 255:red, 0; green, 0; blue, 0 }  ,draw opacity=1 ][fill={rgb, 255:red, 0; green, 0; blue, 0 }  ,fill opacity=1 ][line width=0.75]      (0, 0) circle [x radius= 3.35, y radius= 3.35]   ;
\draw    (122.83,109) -- (111.6,116.2) -- (101,123.02) ;
\draw [shift={(101,123.02)}, rotate = 147.24] [color={rgb, 255:red, 0; green, 0; blue, 0 }  ][fill={rgb, 255:red, 0; green, 0; blue, 0 }  ][line width=0.75]      (0, 0) circle [x radius= 3.35, y radius= 3.35]   ;
\draw [shift={(122.83,109)}, rotate = 147.33] [color={rgb, 255:red, 0; green, 0; blue, 0 }  ][fill={rgb, 255:red, 0; green, 0; blue, 0 }  ][line width=0.75]      (0, 0) circle [x radius= 3.35, y radius= 3.35]   ;
\draw    (122.83,109) -- (147.83,107) ;
\draw [shift={(147.83,107)}, rotate = 355.43] [color={rgb, 255:red, 0; green, 0; blue, 0 }  ][fill={rgb, 255:red, 0; green, 0; blue, 0 }  ][line width=0.75]      (0, 0) circle [x radius= 3.35, y radius= 3.35]   ;
\draw [shift={(122.83,109)}, rotate = 355.43] [color={rgb, 255:red, 0; green, 0; blue, 0 }  ][fill={rgb, 255:red, 0; green, 0; blue, 0 }  ][line width=0.75]      (0, 0) circle [x radius= 3.35, y radius= 3.35]   ;
\draw    (147.83,107) -- (171,123) ;
\draw [shift={(171,123)}, rotate = 34.63] [color={rgb, 255:red, 0; green, 0; blue, 0 }  ][fill={rgb, 255:red, 0; green, 0; blue, 0 }  ][line width=0.75]      (0, 0) circle [x radius= 3.35, y radius= 3.35]   ;
\draw  [fill={rgb, 255:red, 128; green, 128; blue, 128 }  ,fill opacity=1 ] (136.03,199.01) -- (101,123.02) -- (171,123) -- cycle ;
\draw    (122.83,109) -- (136.03,199.01) ;
\draw [shift={(136.03,199.01)}, rotate = 81.66] [color={rgb, 255:red, 0; green, 0; blue, 0 }  ][fill={rgb, 255:red, 0; green, 0; blue, 0 }  ][line width=0.75]      (0, 0) circle [x radius= 3.35, y radius= 3.35]   ;
\draw [shift={(122.83,109)}, rotate = 81.66] [color={rgb, 255:red, 0; green, 0; blue, 0 }  ][fill={rgb, 255:red, 0; green, 0; blue, 0 }  ][line width=0.75]      (0, 0) circle [x radius= 3.35, y radius= 3.35]   ;
\draw    (147.83,107) -- (137.4,198.34) ;
\draw    (101,123.02) -- (171,123) ;
\draw [shift={(171,123)}, rotate = 359.98] [color={rgb, 255:red, 0; green, 0; blue, 0 }  ][fill={rgb, 255:red, 0; green, 0; blue, 0 }  ][line width=0.75]      (0, 0) circle [x radius= 3.35, y radius= 3.35]   ;
\draw [shift={(101,123.02)}, rotate = 359.98] [color={rgb, 255:red, 0; green, 0; blue, 0 }  ][fill={rgb, 255:red, 0; green, 0; blue, 0 }  ][line width=0.75]      (0, 0) circle [x radius= 3.35, y radius= 3.35]   ;
\draw   (285.83,94.35) -- (322.18,58) -- (407,58) -- (407,159.65) -- (370.65,196) -- (285.83,196) -- cycle ; \draw   (407,58) -- (370.65,94.35) -- (285.83,94.35) ; \draw   (370.65,94.35) -- (370.65,196) ;
\draw   (406.97,159.65) -- (321.48,159.87) -- (321.22,58.77) ;
\draw    (321.48,159.87) -- (285.83,196) ;

\draw (131,25) node [anchor=north west][inner sep=0.75pt]   [align=left] {$\displaystyle 1$};
\draw (132,205) node [anchor=north west][inner sep=0.75pt]   [align=left] {$\displaystyle 2$};
\draw (173,126) node [anchor=north west][inner sep=0.75pt]   [align=left] {$\displaystyle 3$};
\draw (111,92) node [anchor=north west][inner sep=0.75pt]   [align=left] {$\displaystyle 4$};
\draw (149,90) node [anchor=north west][inner sep=0.75pt]   [align=left] {$\displaystyle 5$};
\draw (89,124) node [anchor=north west][inner sep=0.75pt]   [align=left] {$\displaystyle 6$};
\draw (307,38) node [anchor=north west][inner sep=0.75pt]   [align=left] {$\displaystyle 145$};
\draw (393,39) node [anchor=north west][inner sep=0.75pt]   [align=left] {$\displaystyle 135$};
\draw (258,75) node [anchor=north west][inner sep=0.75pt]   [align=left] {$\displaystyle 146$};
\draw (372.65,97.35) node [anchor=north west][inner sep=0.75pt]   [align=left] {$\displaystyle 136$};
\draw (323.48,162.87) node [anchor=north west][inner sep=0.75pt]   [align=left] {$\displaystyle 245$};
\draw (408.97,162.65) node [anchor=north west][inner sep=0.75pt]   [align=left] {$\displaystyle 235$};
\draw (372.65,199) node [anchor=north west][inner sep=0.75pt]   [align=left] {$\displaystyle 236$};
\draw (274,199) node [anchor=north west][inner sep=0.75pt]   [align=left] {$\displaystyle 246$};

\end{tikzpicture}
    \end{center}
\end{example}

\subsection{Artinian algebras and Macaulay duality}

Let $R = \K[x_1, \dots, x_n]$ where $\K$ is a field, $\m = (x_1, \dots, x_n)$ the maximal homogeneous ideal and $I$ an artinian ideal of $R$. The \emph{socle} of the algebra $A = R/I$ is the vector space $\Soc A = \{f \st \m f = 0\}$, and the \emph{socle degree} of $A$ is $\max (d \st A_d \neq 0)$. We say $A$ is \emph{level} if $\Soc A = A_d$, where $d$ is the socle degree of $A$.
Throughout this paper, we denote by $\HF(i, A) = \dim_{\K} A_i$ the \emph{Hilbert function} of an $\K$--algebra $A$, and by $\HS(x, A) = \sum_{i = 0}^\infty x^i \HF(i, A)$ the \emph{Hilbert series} of $A$. It is a well-known fact that for a $d$-dimensional complex $\Delta$, the following equality holds:
$$ 
    \HS(x, R/I_\Delta) = \frac{h_\Delta(x)}{(1 - x)^{d + 1}}.
$$
Let $R/I$ be a $(d+1)$--dimensional algebra. A sequence of homogeneous elements $\theta_1, \dots, \theta_{d+1}$ is called a \emph{system of parameters} (sop for short) of $I$ if $\frac{R}{I + (\theta_1,\dots,\theta_{d+1})}$ is an artinian algebra, that is, a finite dimensional vector space. If moreover $\theta_1, \dots, \theta_{d+1}$ are all linear forms, we say the sequence is a \emph{linear system of parameters} (lsop for short). When the underlying field $\K$ is infinite, one can always guarantee the existence of a lsop of $I$.

Given an integer $a$ we denote by $A_\Delta(a)$ the artinian monomial algebra 
$$
    \frac{\K[x_1, \dots, x_n]}{I_\Delta + (x_1^a, \dots, x_n^a)}.
$$
Several algebraic properties of $A_\Delta(a)$ can be obtained from combinatorial properties of $\Delta$. For example the algebra $A_\Delta(a)$ is level if $\Delta$ is pure, and in this case the socle degree of $A_\Delta(a)$ is given by $(d+1)(a-1)$, where $d = \dim \Delta$~\cite{VTZ2010}.

Let $S = \K[y_1, \dots, y_n]$ and $R = \K[x_1, \dots, x_n]$ be two polynomial rings where $\deg x_i = -\deg y_i = 1$. Let $x^{\ba} = x_1^{a_1}\dots x_n^{a_n} \in R$ and $y^{\bb} = y_1^{b_1} \dots y_n^{b_n} \in S$, the \emph{contraction} of $y^{\bb}$ by $x^{\ba}$ is defined as follows:
$$
    x^{\ba} \circ y^{\bb} = \begin{cases}
    y_1^{b_1 - a_1} \dots y_n^{b_n - a_n} \qif a_i \leq b_i \qforall i \\
    0 \quad \mbox{otherwise}.        
    \end{cases}
$$
By extending the action defined above linearly to arbitrary polynomials in $R$ and $S$ we may view $S$ as an $R$-module. For a homogeneous artinian ideal $I$, the \emph{inverse system} of $I$ is the $R$-module 
$$
    I^{-1} = \{F \st g \circ F = 0 \qforall g \in I\} \subset S.
$$  
When $I^{-1}$ is generated by $F_1, \dots, F_s$, we write $I^{-1} = (F_1, \dots, F_s)$. We note that when the characteristic of the base field is $0$ we may also use differentiation as an action: $x^{\ba} \bullet y^{\bb} = \frac{\partial y^{\bb}}{\partial y^{\ba}}$. In this case we denote by $I^\perp = \{F \st g \bullet F = 0 \qforall g \in I\} \subset S$ the inverse system of $I$ with respect to the action $\bullet$.\looseness-1

The following result due to Macaulay~\cite{macaulaybookinversesystems} is the main result we use throughout the paper. The analogue result using differentiation instead of contraction holds when we replace $I^{-1}$ by $I^\perp$. For a more modern approach to the topic see for example~\cite{lefschetzbook}.

\begin{theorem}[\emph{Macaulay duality}]\label{t:macaulay}
    Let $I$ be an ideal of $R$. Then 
    \begin{enumerate}
        \item $I$ is artinian if and only if $I^{-1}$ is finitely generated,
        \item the algebra $R/I$ is artinian Gorenstein if and only if $I^{-1}$ is a cyclic module, and
        \item there is a bijection (up to multiplication by constants) between artinian Gorenstein algebras $R/I$ of socle degree $d$ and homogeneous polynomials $F$ of degree $d$ given by $I^{-1} = (F)$.
    \end{enumerate}
    Moreover, when $I$ is artinian we have $\dim (R/I)_i = \dim (I^{-1})_{-i}$ for every $i$. 
\end{theorem}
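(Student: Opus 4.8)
\emph{Approach.} The statement is the classical theory of inverse systems, and the plan is to prove it as a form of graded Matlis duality built on a single perfect pairing. The starting point is the observation that the contraction action exhibits $S$ as the graded $\K$-dual of $R$: for each $d$ the pairing $R_d \times S_{-d} \to \K$ sending $(g,F)$ to $g\circ F \in S_0 = \K$ is perfect, because the monomial bases $\{x^\alpha : |\alpha| = d\}$ of $R_d$ and $\{y^\alpha : |\alpha|=d\}$ of $S_{-d}$ are dual to one another under it. Using that $I$ is a graded ideal, I would first check degree by degree that $(I^{-1})_{-i}$ equals $\{F \in S_{-i} : g \circ F = 0 \text{ for all } g \in I_i\}$, the annihilator of $I_i$ inside $(R_i)^\vee \cong S_{-i}$, whence $(I^{-1})_{-i} \cong ((R/I)_i)^\vee$. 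This already yields the final ``moreover'' clause $\dim (R/I)_i = \dim (I^{-1})_{-i}$, and, assembling the pieces, an isomorphism of graded $R$-modules $I^{-1} \cong (R/I)^\vee$, since the contraction action is by construction dual to multiplication on $R/I$.

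\emph{The reflexivity step.} The technical heart, and the step I expect to be the main obstacle, is proving that the two operations $I \mapsto I^{-1}$ and $M \mapsto \operatorname{Ann}_R(M) := \{g \in R : g\circ F = 0 \text{ for all } F \in M\}$ are mutually inverse, order-reversing bijections; concretely that $(\operatorname{Ann}_R(M))^{-1} = M$ and $\operatorname{Ann}_R(I^{-1}) = I$. The subtlety is that these annihilators a priori mix all degrees, so I would first use the \emph{module} structure to localize them in a single degree. For a graded submodule $M \subseteq S$ I would show that $g \in R_i$ lies in $\operatorname{Ann}_R(M)$ as soon as $g \circ F = 0$ for every $F \in M_{-i}$: given $F' \in M_{-j}$ with $j > i$, to see that $g \circ F' = 0$ in $S_{-(j-i)}$ it suffices to pair it with each $h \in R_{j-i}$, and $h \circ (g \circ F') = g \circ (h \circ F') = 0$ since $h \circ F' \in M_{-i}$; perfectness of the pairing in degree $j-i$ then forces $g \circ F' = 0$. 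With this reduction, $(\operatorname{Ann}_R M)_i$ and $(I^{-1})_{-i}$ are ordinary annihilators of subspaces under a perfect pairing, so taking them twice returns the original subspace, and reflexivity follows degree by degree.

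\emph{Deducing the three items.} For (1), if $I$ is artinian then $R/I$ is finite dimensional, so by the dimension identity $I^{-1}$ is finite dimensional, hence finitely generated. Conversely, if $I^{-1}$ is finitely generated by forms of degrees $-d_1, \dots, -d_r$, then since multiplication by each $x_j$ raises the $S$-degree by one toward $0$ (where the action terminates), $I^{-1}$ is contained in the finitely many graded pieces $S_0, \dots, S_{-\max_k d_k}$, each of finite dimension; thus $I^{-1}$ is finite dimensional, and the dimension identity forces $R/I$ finite dimensional, i.e. $I$ artinian. For (2), via $I^{-1} \cong (R/I)^\vee$ I would invoke the standard Matlis-duality fact that for a finite length module $N$ the minimal number of generators of $N^\vee$ equals $\dim_\K \Soc N$; hence $I^{-1}$ is cyclic if and only if $\Soc(R/I)$ is one dimensional, which for an artinian algebra is exactly the Gorenstein condition. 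For (3), combine (2) with reflexivity: a Gorenstein $R/I$ of socle degree $d$ gives a cyclic $I^{-1} = (F)$ whose generator is a form of degree $d$ (its top, socle, degree), while conversely any nonzero form $F$ of degree $d$ produces $I = \operatorname{Ann}_R(F)$ with $I^{-1} = RF$ by reflexivity, so that $R/I$ is artinian Gorenstein of socle degree $d$; finally $RF = RF'$ with both generators in degree $d$ forces $F' \in \K F$, which is the ``up to multiplication by constants'' clause.
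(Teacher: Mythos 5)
The paper does not prove this statement at all: it is quoted as Macaulay's classical theorem, with citations to Macaulay's book and to the modern literature on inverse systems, so there is no internal proof to compare yours against. Judged on its own, your argument is correct and is essentially the standard modern proof via graded duality. The three technical pillars are all handled properly: the perfect pairing $R_i \times S_{-i} \to \K$ with dual monomial bases; the reduction of annihilator conditions to a single degree using the $R$-module structure (the identity $h \circ (g \circ F') = (hg) \circ F'$ plus perfectness, which is also what justifies your claim that $(I^{-1})_{-i}$ is cut out by $I_i$ alone, since $I$ being an ideal gives $R_{i-j} I_j \subseteq I_i$); and the degree-by-degree double-annihilator argument for reflexivity. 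These correctly yield the dimension identity, item (1), and item (3), including the uniqueness of the degree-$d$ generator up to scalars. Two small expository points, neither a genuine gap: in item (2), the fact that the minimal number of generators of $N^\vee$ equals $\dim_\K \Soc N$ requires $N = R/I$ to have finite length, so in the direction ``cyclic $\Rightarrow$ Gorenstein'' you should first observe that cyclic implies finitely generated, hence $I$ is artinian by your item (1), before invoking it; and you implicitly use throughout that $I^{-1}$ is a \emph{graded} submodule of $S$ (so that its generators may be taken homogeneous, e.g.\ in the converse of (1)), which is immediate from homogeneity of $I$ but deserves a sentence.
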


When $R/I$ is an artinian Gorenstein algebra, the polynomial $F \in S$ such that $I^{-1} = (F)$ is called the \emph{Macaulay dual generator} of $R/I$. Even though $F$ is an element of $S$, we may view it as an element in $R$ by simply replacing every $y_i$ by $x_i$. We will often do this throughout the next sections.

\begin{example}
    Let 
    $$
        F = x_1 x_4 x_5 - x_1 x_3 x_5 + x_1 x_3 x_6 - x_1 x_4 x_6 - x_2 x_4 x_5 + x_2 x_4 x_6 - x_2 x_3 x_6 + x_2 x_3 x_5 \in \K[x_1,\dots, x_6].
    $$
    The polynomial $F$ is defined by a bipartition of the facet-ridge graph of the complex $\Delta$ from~\cref{ex:complex} and is the Macaulay dual generator of the algebra 
    $$
        A_F = \frac{R}{I_\Delta + (x_1 + x_2, x_3 + x_4, x_5 + x_6)} \cong \frac{R}{(x_1^2, x_3^2, x_5^2, x_2, x_4, x_6)} \cong \frac{\K[z_1, z_2, z_3]}{(z_1^2, z_2^2, z_3^2)}.
    $$
    In particular, $F$ is the Macaulay dual generator of the complete intersection $I_\Delta + (x_1 + x_2, x_3 + x_4, x_5 + x_6)$. Note that $F$ can be obtained from $x_1 x_2 x_3$ by the linear change of coordinates
    $$
        x_i \mapsto \begin{cases}
            x_{2i} - x_{2i - 1} \qif i \leq 3 \\
            x_i \quad \mbox{otherwise.}
        \end{cases}
    $$
\end{example}

\section{Unexpectedness and Lefschetz properties}

In this section, we define the main concept introduced in this paper: unexpected systems of parameters of squarefree monomial ideals. As is mentioned in the introduction, the main idea behind the definition of these systems of parameters, is to bring recent ideas developed for the study of interpolation problems on sets of points in $\Pp^n$ and unexpected hypersurfaces, to the Stanley-Reisner setting. We will see in later sections that these ideas turn out to give natural combinatorial constructions that have been and still are studied and play key roles in Stanley-Reisner theory.

From the perspective of ideals of points in $\Pp^n$, a result of Iarrobino and Emsalem~\cite{EI1995} uses the theory of inverse systems to show that by applying the usual duality of projective space given by 
$$
    L = a_0 x_0 + \dots + a_n x_n \longleftrightarrow [a_0 : \dots : a_n] \in \Pp^n,
$$
one is able to understand Hilbert functions of ideals defined by powers of linear forms, by studying Hilbert functions of ideals of points with multiplicity. This idea was then exploited in~\cite{CHMN2018} to study when does the multiplication map by (powers of) a general linear form $L$ fails to have full rank in an algebra $A$ defined by powers of linear forms.

Given an artinian algebra $A$ of socle degree $d$, a linear form $L$ is called a \emph{weak Lefschetz element} if the multiplication maps $\times L: A_i \to A_{i + 1}$ have full rank for every $0 \leq i < d$, if $A$ has a Lefschetz element $L$, we say $A$ has (or satisfies) the \emph{weak Lefschetz property} (abbreviated WLP). If the multiplication maps $\times L^j: A_{i} \to A_{i + j}$ have full rank for every $1 \leq j \leq d$ and $0 \leq i \leq d - j$, then we say $L$ is a \emph{strong Lefschetz element} for $A$, and if $A$ has a strong Lefschetz element, we say $A$ has (or satisfies) the \emph{strong Lefschetz property} (abbreviated SLP).
 
Lefschetz properties of artinian algebras have been extensively studied in several different settings. In particular, many techniques have been used to explore the WLP and SLP of artinian algebras of special classes such as the ones defined by monomial ideals, ideals of powers of linear forms, and Gorenstein artinian algebras. In the monomial setting, it is known that a monomial algebra $A$ satisfies the WLP (resp. SLP) if and only if the sum of the variables $L$ is a weak Lefschetz element (resp. strong Lefschetz element)~\cite[Proposition 2.2]{MMN2011}.

In~\cite{H2025B} the author noticed the following connection between Lefschetz properties of artinian monomial algebras and inverse systems.

\begin{theorem}[{\cite[Theorem 5.4]{H2025B}}]\label{t:sopfailure}
    Let $I \subset R = \K[x_1,\dots, x_n]$ be a monomial ideal, $J = I + (x_1^{a_1}, \dots, x_n^{a_n})$, $L = x_1 + \dots + x_n$ and $A = R/J$. Then a multiplication map
    $$
        \times L: A_{i - 1} \to A_i
    $$
    fails to be surjective if and only if there exists a sequence of elements $g_1, \dots, g_k$ such that $I + (g_1, \dots, g_k, L)$ is an artinian ideal, and a polynomial $G \in (I + (L, g_1, \dots, g_k))^{-1}_i$ such that 
    $$
        G = \sum_j c_j x_1^{b_{j_1}}\dots x_n^{b_{j_n}},
    $$
    where $c_j \in \K$ and $b_{j_s} < a_s$ for every $j, s$.
\end{theorem}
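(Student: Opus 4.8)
The plan is to translate the statement about surjectivity of the multiplication map into a dual statement about the contraction action on the inverse system, where the hypotheses become transparent. The key observation is that the apolarity pairing $R_j \times S_{-j} \to \K$, $(f,F) \mapsto f \circ F$, is perfect and satisfies the adjunction
$$
    (Lf) \circ F = f \circ (L \circ F) \qforall f \in R_{i-1},\ F \in S_{-i},
$$
since contraction is an $R$-module action and $R$ is commutative. Because $J^{-1}$ is precisely the annihilator of $J$ under $\circ$, this pairing descends to a perfect pairing $A_j \times (J^{-1})_{-j} \to \K$ (the two spaces having equal dimension by \cref{t:macaulay}), and $J^{-1}$ is an $R$-submodule of $S$, so $\circ L$ maps $(J^{-1})_{-i}$ into $(J^{-1})_{-i+1}$. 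The adjunction then shows that the transpose of $\times L \colon A_{i-1} \to A_i$ is exactly $\circ L \colon (J^{-1})_{-i} \to (J^{-1})_{-i+1}$. Hence $\times L$ fails to be surjective in degree $i$ if and only if there is a nonzero $G \in (J^{-1})_{-i}$ with $L \circ G = 0$. I would isolate this equivalence as the first step.

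The second step is to read off the shape of such a $G$. Since $J = I + (x_1^{a_1}, \dots, x_n^{a_n})$ is a monomial ideal, $J^{-1}$ is spanned by monomials, and a monomial $y^{\bb}$ lies in $J^{-1}$ exactly when $x_s^{a_s} \circ y^{\bb} = 0$ for all $s$ --- that is, $b_s < a_s$ for every $s$ --- and $m \circ y^{\bb} = 0$ for every monomial generator $m$ of $I$, equivalently $I \circ y^{\bb} = 0$. Thus membership $G \in (J^{-1})_{-i}$ is precisely the conjunction of the bounded-exponent condition $b_{j_s} < a_s$ appearing in the statement together with $I \circ G = 0$, while the extra condition $L \circ G = 0$ records that $G$ lies in the kernel of contraction by $L$.

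It remains to match the ideal $I + (L, g_1, \dots, g_k)$ to these conditions. For the forward direction I would, given a nonzero $G$ as above, set $g_1, \dots, g_k$ to be a generating set of the apolar ideal $\{g \in R : g \circ G = 0\}$, which is artinian with Macaulay dual generator $G$ by \cref{t:macaulay}. Since $I \circ G = 0$ and $L \circ G = 0$, we have $I + (L) \subseteq \{g : g\circ G = 0\}$, so $I + (L, g_1, \dots, g_k)$ equals this apolar ideal; it is therefore artinian and $G$ belongs to its inverse system, which is the bounded-exponent polynomial demanded. For the converse, any $G \in (I + (L, g_1, \dots, g_k))^{-1}_i$ satisfies $I \circ G = 0$ and $L \circ G = 0$ because that ideal contains both $I$ and $L$, while the bounded-exponent hypothesis $b_{j_s} < a_s$ forces $x_s^{a_s} \circ G = 0$; together these give a nonzero $G \in (J^{-1})_{-i}$ with $L \circ G = 0$, and the equivalence of the first step closes the argument.

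I expect the main obstacle to be the careful setup of the adjunction and the descent of the pairing to $A$: one must verify that $\circ L$ genuinely is the transpose of $\times L$ under the (graded, oppositely-shifted) apolarity pairing, keeping track of the opposite gradings on $R$ and $S$. The role of the auxiliary elements $g_1, \dots, g_k$ is purely to cut the possibly non-artinian ideal $I + (L)$ down to an artinian one --- namely the apolar ideal of $G$ --- so that \cref{t:macaulay} applies and the phrase ``$G$ lies in the inverse system of an artinian ideal'' is literally correct; recognizing that the apolar ideal of the single form $G$ supplies these elements is the crux of the forward direction.
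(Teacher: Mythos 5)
Your proof is correct, and it takes essentially the route the paper's own machinery encodes: your duality step is exactly \cref{l:contraction} (contraction by $L$ is the transpose of $\times L$, descending to $A$ via the perfect pairing of \cref{t:macaulay}), and producing $g_1,\dots,g_k$ as generators of the apolar ideal $\{g : g \circ G = 0\}$ of the nonzero kernel element $G$ is the standard mechanism the cited result relies on. The paper cites the proof to [H2025B] rather than reproducing it, and your argument is a complete and faithful reconstruction of that approach.
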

 
\cref{t:sopfailure} is the connection between Lefschetz properties and inverse systems that we will constantly use in our setting, instead of the results in~\cite{EI1995}, which are extremely useful in the context of algebras defined by powers of linear forms.

We are now ready to define unexpected systems of parameters.
 
\begin{definition}[\emph{Unexpected systems of parameters}]\label{d:unexpected}
    Let $\Delta$ be a $d$-dimensional Cohen-Macaulay simplicial complex over some field $\K$ on vertex set $[n]$, $f \in R = \K[x_1, \dots, x_n]$ a homogeneous polynomial, $t \geq \deg h_\Delta(x)$ an integer and $\ba = (a_1, \dots, a_n) \in \N^n$. We say a sequence of elements $\theta_1, \dots, \theta_{d+1} \in R$ is an \emph{$(f, \ba)$--unexpected system of parameters of total degree $t$} if:
    \begin{enumerate}[label=({U}{{\arabic*}})]
        \item\label{i:1} $\theta_1, \dots, \theta_{d+1}$ is a homogeneous system of parameters of $I_\Delta$,
        \item\label{i:2} $\deg \theta_1 + \dots + \deg \theta_{d+1} + \deg h_\Delta(x) - d - 1 = t$,
        \item\label{i:3} $f \in I_\Delta + (\theta_1, \dots, \theta_{d+1})$,
        \item\label{i:4} $x_i^{a_i} \in I_\Delta + (\theta_1, \dots, \theta_{d+1})$ for every $i$, and 
        \item\label{i:5} $\HF(t, R/J) \leq \HF(t - \deg f, R/J)$, where $J = I_\Delta + (x_1^{a_1}, \dots, x_n^{a_n})$.
    \end{enumerate}
    When $f$ is the sum of variables of $R$ and $\ba = (a, \dots, a)$, we simply say $\theta_1, \dots, \theta_{d+1}$ is an \emph{$a$-unexpected system of parameters of total degree $t$}.
\end{definition}

The proposition below makes the connection between failure of WLP and existence of unexpected sops concrete in our setting.

\begin{proposition}\label{p:unexpectedfailure}
    Let $\Delta$ be a $d$-dimensional Cohen-Macaulay simplicial complex over a field $\K$ on vertex set $[n]$ and let $L = x_1 + \dots + x_n$. Assume $\theta_1, \dots, \theta_{d+1}$ is an $(L, \ba)$--unexpected sop of $\Delta$ of total degree $t$, where $\ba = (a_1, \dots, a_n)$. Then the monomial algebra 
    $$
        A = \frac{R}{I_\Delta + (x_1^{a_1}, \dots, x_n^{a_n})}
    $$
    fails the WLP due to surjectivity in degree $t-1$.
\end{proposition}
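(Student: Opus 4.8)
The plan is to apply \cref{t:sopfailure} with $I = I_\Delta$ and $J = I_\Delta + (x_1^{a_1}, \dots, x_n^{a_n})$, choosing the auxiliary elements $g_1, \dots, g_k$ of that theorem to be the sop $\theta_1, \dots, \theta_{d+1}$ themselves and examining the multiplication map in degree $i = t$. By~\ref{i:1} the ideal $I_\Delta + (\theta_1, \dots, \theta_{d+1})$ is artinian, and by~\ref{i:3} we have $L \in I_\Delta + (\theta_1, \dots, \theta_{d+1})$, whence $I_\Delta + (\theta_1, \dots, \theta_{d+1}, L) = I_\Delta + (\theta_1, \dots, \theta_{d+1})$ is artinian. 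This verifies the standing hypothesis of \cref{t:sopfailure}, so it remains only to produce a nonzero element $G$ of degree $t$ in $\left(I_\Delta + (\theta_1, \dots, \theta_{d+1})\right)^{-1}$ all of whose monomials $x_1^{b_1}\cdots x_n^{b_n}$ satisfy $b_s < a_s$ for every $s$.

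To build $G$, set $B = R/(I_\Delta + (\theta_1, \dots, \theta_{d+1}))$. Since $\Delta$ is Cohen-Macaulay, $R/I_\Delta$ is a Cohen-Macaulay ring of dimension $d+1$, so the sop $\theta_1, \dots, \theta_{d+1}$ is a regular sequence on it and quotienting multiplies the Hilbert series by $\prod_i (1 - x^{\deg \theta_i})$. Combined with $\HS(x, R/I_\Delta) = h_\Delta(x)/(1-x)^{d+1}$ this gives
$$
    \HS(x, B) = h_\Delta(x)\prod_{i=1}^{d+1}\left(1 + x + \cdots + x^{\deg \theta_i - 1}\right),
$$
a polynomial whose degree is $\deg h_\Delta(x) + \sum_{i}(\deg \theta_i - 1) = t$ by~\ref{i:2}, and whose leading coefficient is the positive top coefficient of $h_\Delta(x)$. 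Hence $\dim_\K B_t > 0$, and the dimension part of \cref{t:macaulay} yields $\dim_\K \left(I_\Delta + (\theta_1, \dots, \theta_{d+1})\right)^{-1}_t = \dim_\K B_t > 0$; I take $G$ to be any nonzero element of this space.

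It remains to check the exponent bound and assemble the conclusion. By~\ref{i:4} each $x_s^{a_s}$ lies in $I_\Delta + (\theta_1, \dots, \theta_{d+1})$, so $x_s^{a_s} \circ G = 0$. Writing $G = \sum_{\bb} c_{\bb} x^{\bb}$, the contraction $x_s^{a_s} \circ G = \sum_{b_s \geq a_s} c_{\bb} x^{\bb - a_s \be_s}$ is a sum of distinct monomials, so its vanishing forces $c_{\bb} = 0$ whenever $b_s \geq a_s$; as $s$ is arbitrary, every monomial of $G$ has all exponents bounded as required. \cref{t:sopfailure} now gives that $\times L \colon A_{t-1} \to A_t$ is not surjective. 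Finally,~\ref{i:5} reads $\HF(t, A) \leq \HF(t-1, A)$ (recall $\deg L = 1$), so a full-rank map $A_{t-1} \to A_t$ would be surjective; the failure of surjectivity is therefore a genuine failure of the WLP in degree $t-1$, caused precisely by $G$.

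The only step with real content is the Hilbert-series computation pinning the top nonzero degree of $B$ at exactly $t$: this is what guarantees a socle element, equivalently a degree-$t$ inverse-system element, to feed into \cref{t:sopfailure}, and it is precisely here that the degree-balancing condition~\ref{i:2} is used. Everything else --- the passage to the inverse system and the contraction bound on exponents --- is a direct unwinding of the definitions, while condition~\ref{i:5} serves only to certify that the non-surjectivity is the relevant obstruction to the WLP.
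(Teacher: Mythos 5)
Your proof is correct and follows essentially the same route as the paper: the paper likewise notes via \ref{i:2} that the socle degree of $R/(I_\Delta + (\theta_1,\dots,\theta_{d+1}))$ is $t$, takes a degree-$t$ element of its inverse system, uses \ref{i:3} and \ref{i:4} to apply \cref{t:sopfailure}, and invokes \ref{i:5} to conclude. The only difference is that you make explicit two steps the paper leaves implicit --- the Hilbert-series computation showing $\dim_\K B_t > 0$ and the contraction argument deriving the exponent bound $b_s < a_s$ from $x_s^{a_s} \circ G = 0$ --- both of which are correct.
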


\begin{proof}
    By~\ref{i:2} the socle degree of the algebra $\frac{R}{I_\Delta + (\theta_1, \dots, \theta_{d+1})}$ is $t$. Let $F$ be an element of the inverse system of $I_\Delta + (\theta_1, \dots, \theta_{d+1})$ of degree $t$. By~\ref{i:3} and~\ref{i:4} we know $L \circ F = x_i^{a_i} \circ F = 0$ and so by~\cref{t:sopfailure} we conclude the map 
    $$
        \times L^T: A_{t} \to A_{t - 1}
    $$
    is not injective, where $A = \frac{R}{I_\Delta + (x_1^{a_1},\dots, x_n^{a_n})}$. The result then follows by~\ref{i:5}.
\end{proof}

The example below shows an interesting consequence of the study of unexpected sops: failure of WLP of monomial artinian reductions of monomial complete intersections can be used to find homogeneous polynomials $F$ that are the Macaulay dual generator of complete intersections. 

\begin{example}[\emph{Unexpected sops and Macaulay dual generators of complete intersections}]
    Let $\Delta$ be the boundary of the $4$-dimensional crosspolytope, so that 
    $$
        I_\Delta = (x_1 x_2, x_3 x_4, x_5 x_6, x_7 x_8) \subset \K[x_1,\dots, x_8] = R.
    $$
    Let $L = x_1 + \dots + x_8$ and $K = \ker \times L^T: A_\Delta(3)_6 \to A_\Delta(3)_5$. A Macaulay2~\cite{M2} computation shows $\HF(5, A_\Delta(3)) = 160 \leq 128 = \HF(6, A_\Delta(3))$ and $\dim K = 2$. Moreover, the following two polynomials form a basis of $K$:

    \begin{enumerate}
        \item $F_1 = (x_1 - x_2)(x_3 - x_4)(x_5 - x_6)(x_7 - x_8)(x_1 + x_2 - x_3 - x_4)(x_5 + x_6 - x_7 - x_8)$, and
        \item $F_1 = (x_1 - x_2)(x_3 - x_4)(x_5 - x_6)(x_7 - x_8)(x_1 + x_2 - x_5 - x_6)(x_3 + x_4 - x_7 - x_8)$.
    \end{enumerate}
    
    Again with Macaulay$2$ we see that there are ideals $I_1, I_2$ such that

    \begin{enumerate}
        \item $I_1 = I_\Delta + (L, x_1 + x_2 + x_3 + x_4,x_3^2 + x_4^2, x_7^2 + x_8^2)$, and $I_1^{-1} = (F_1)$
        \item $I_2 = I_\Delta + (L, x_1 + x_2 + x_5 + x_6,x_5^2 + x_6^2, x_7^2 + x_8^2)$, and $I_2^{-1} = (F_2)$,
    \end{enumerate}
    where $F_1$ and $F_2$ are viewed as elements in $R$ instead of $A_\Delta(3)$.

    In particular, one can check that both $L, x_1 + x_2 + x_3 + x_4, x_3^2 + x_4^2, x_7^2 + x_8^2$ and $L, x_1 + x_2 + x_5 + x_6, x_5^2 + x_6^2, x_7^2 + x_8^2$ are $3$-unexpected sops of $\Delta$. Note that since $I_\Delta$ is a monomial complete intersection, both $F_1$ and $F_2$ are the Macaulay dual generators of complete intersections.
\end{example}

We now state the main results of~\cite{H2025B} in the context of unexpected sops. Recall that the $i$-th \emph{elementary symmetric polynomial} $e_i$ of $R = \K[x_1,\dots, x_n]$ is the sum of every squarefree monomail of degree $i$ of $R$.

\begin{theorem}[{\cite[Corollary 4.5 and Theorem 6.6]{H2025B}}]\label{t:universalunexpected}
    Let $\Delta$ be a $d$-dimensional $\K$-homology sphere. Then the elementary symmetric polynomials $e_1, \dots, e_{d+1}$ form an $(d + 2)$--unexpected system of parameters of total degree $\binom{d + 2}{2}$ of $\Delta$.
\end{theorem}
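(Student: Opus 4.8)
The plan is to verify the five conditions \ref{i:1}--\ref{i:5} of \cref{d:unexpected} for $\theta_k = e_k$, $f = L = x_1 + \dots + x_n$, $\ba = (d+2, \dots, d+2)$ and $t = \binom{d+2}{2}$. Condition \ref{i:3} is immediate since $L = e_1 \in (e_1, \dots, e_{d+1})$, and \ref{i:2} is a short degree count: $\sum_{k=1}^{d+1}\deg e_k = \sum_{k=1}^{d+1}k = \binom{d+2}{2}$, while a $\K$-homology sphere has $R/I_\Delta$ Gorenstein, so its $h$-vector is symmetric with $h_{d+1}=1$ by the Dehn--Sommerville relations and hence $\deg h_\Delta = d+1$; substituting gives $t = \binom{d+2}{2} + (d+1) - (d+1) = \binom{d+2}{2}$. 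The real work is \ref{i:1}, \ref{i:4} and \ref{i:5}, and I would obtain \ref{i:1} and \ref{i:4} simultaneously from one generating-function identity.

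For \ref{i:4}, fix a vertex $i$ and let $e_k^{(i)}$ denote the $k$-th elementary symmetric polynomial in the variables $\{x_j : j \neq i\}$. Multiplying $\prod_{j=1}^n(1 - x_j T) = \sum_k (-1)^k e_k T^k$ by $(1-x_iT)^{-1} = \sum_{m\ge 0}x_i^mT^m$, so that the left side becomes $\prod_{j\ne i}(1-x_jT) = \sum_N(-1)^Ne_N^{(i)}T^N$, and comparing the coefficient of $T^{N}$ yields the identity $\sum_{k=0}^{N}(-1)^k e_k x_i^{N-k} = (-1)^N e_N^{(i)}$. Taking $N = d+2$, separating the $k = d+2$ term, and using the deletion relation $e_{d+2} = e_{d+2}^{(i)} + x_i e_{d+1}^{(i)}$ gives
\begin{align*}
x_i^{d+2} + \sum_{k=1}^{d+1}(-1)^k e_k x_i^{d+2-k} = (-1)^{d+1} x_i\, e_{d+1}^{(i)}.
\end{align*}
Now $x_i\, e_{d+1}^{(i)} = \sum_{T \ni i,\ |T| = d+2} \prod_{j\in T} x_j$ is a sum of squarefree monomials whose supports have $d+2 > \dim\Delta + 1$ elements, hence all lie in $I_\Delta$. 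Therefore $x_i^{d+2} \in I_\Delta + (e_1, \dots, e_{d+1})$, which is \ref{i:4}. This argument uses only $\dim\Delta = d$; in particular every variable is nilpotent in $R/(I_\Delta + (e_1,\dots,e_{d+1}))$, so this quotient is artinian, and since $\dim R/I_\Delta = d+1$ equals the number of forms, $e_1,\dots,e_{d+1}$ is a system of parameters, giving \ref{i:1}.

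The main obstacle is \ref{i:5}, the inequality $\HF(t, A_\Delta(d+2)) \le \HF(t-1, A_\Delta(d+2))$. Here I would pass to the monomial basis: grouping standard monomials by their support $F \in \Delta$ gives
\begin{align*}
\HF(m, A_\Delta(d+2)) = \sum_{j=0}^{d+1} f_{j-1}(\Delta)\, c_j(m), \qquad c_j(m) = [x^m]\,(x + x^2 + \dots + x^{d+1})^j,
\end{align*}
and each $c_j$ is the unimodal, palindromic coefficient sequence of a polynomial centered at $\tfrac{j(d+2)}{2}$. Since $t = \tfrac{(d+1)(d+2)}{2} \ge \tfrac{j(d+2)}{2}$ for all $j \le d+1$, with equality exactly at $j = d+1$, every term with $j \le d$ already satisfies $c_j(t) \le c_j(t-1)$ (both arguments lie on the non-increasing half), while the single term $j = d+1$ has its peak exactly at $t$ and pushes the wrong way. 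The crux is therefore to show that this one positive contribution $f_d(\Delta)\,(c_{d+1}(t) - c_{d+1}(t-1))$ is dominated by the negative ones; I expect this to require feeding the symmetry $h_i = h_{d+1-i}$ of the $h$-vector of the homology sphere into the $f$-to-$h$ conversion so that the resulting alternating sum collapses. This balancing is exactly the content of the cited \cite[Theorem 6.6]{H2025B}, and is where essentially all the difficulty of the theorem is concentrated; conditions \ref{i:1}--\ref{i:4} are, by contrast, formal.
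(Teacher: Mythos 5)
Your proposal is correct and follows the same overall skeleton as the paper's proof: verify \ref{i:1}--\ref{i:5} of \cref{d:unexpected} one by one, with \ref{i:2} and \ref{i:3} immediate and the Hilbert-function inequality \ref{i:5} delegated to \cite[Theorem 6.6]{H2025B} --- which is exactly what the paper itself does, so deferring there is legitimate. Where you genuinely diverge is in \ref{i:1} and \ref{i:4}: the paper disposes of these by bare citation (to \cite{HM2021} and \cite[Corollary 4.5]{H2025B} respectively), whereas you give a self-contained two-line argument: the coefficient identity $\sum_{k=0}^{N}(-1)^k e_k x_i^{N-k} = (-1)^N e_N^{(i)}$ extracted from $\prod_j(1-x_jT)$, which at $N = d+2$ yields $x_i^{d+2} \equiv (-1)^{d+1} x_i e_{d+1}^{(i)} \pmod{(e_1,\dots,e_{d+1})}$, and $x_i e_{d+1}^{(i)}$ is a sum of squarefree monomials with supports of cardinality $d+2$, hence lies in $I_\Delta$ for \emph{any} $d$-dimensional complex; artinianness of the quotient plus $\dim R/I_\Delta = d+1$ then gives \ref{i:1} with no Cohen--Macaulay input. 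This buys transparency and shows \ref{i:1}, \ref{i:3}, \ref{i:4} need nothing about spheres (the homology-sphere hypothesis enters only through $\deg h_\Delta = d+1$ in \ref{i:2} and through \ref{i:5}). Your analysis of \ref{i:5} --- decomposing $\HF(m, A_\Delta(d+2)) = \sum_j f_{j-1}\,c_j(m)$ with $c_j$ the palindromic unimodal coefficients of $(x + \dots + x^{d+1})^j$, and observing that only the $j = d+1$ term, whose peak sits exactly at $t = \binom{d+2}{2}$, pushes against the inequality --- is a correct reduction that accurately localizes where all the difficulty of the cited \cite[Theorem 6.6]{H2025B} lives, even though you (rightly) do not attempt to redo that balancing argument here.
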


\begin{proof}
    Condition \ref{i:1} is a well-known fact (see for example~\cite{HM2021}), \ref{i:2} and \ref{i:3} follow directly from the definition. Condition \ref{i:4} follows from~\cite[Corollary 4.5]{H2025B} and \ref{i:5} is shown in~\cite[Theorem 6.6]{H2025B}.
\end{proof}

In view of~\cref{p:unexpectedfailure}, the following simple observation from~\cite{H2025B} is the main tool we use for finding unexpected systems of parameters.

\begin{lemma}[{\cite[Lemma 5.3]{H2025B}}]\label{l:contraction}
    Let $f \in R = \K[x_1, \dots, x_n]$, $S = \K[y_1, \dots, y_n]$ and $J \subset R$ a monomial ideal. Then the matrix $M$ that represents the map 
    $$
        \times f^T: R_{i} \to R_{i - \deg f}
    $$
    is the same matrix that represents the map 
    $$
        f \circ: S_{-i} \to S_{\deg f - i}.
    $$ 
    In particular, the matrix that represents the map 
    $$
        \times f^T: (R/J)_{i} \to (R/J)_{i - \deg f}
    $$
    is obtained from $M$ by deleting rows and columns corresponding to monomials in $J$.
\end{lemma}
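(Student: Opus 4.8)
The plan is to verify directly, in the monomial bases, that both maps are represented by the \emph{same} coefficient matrix of $f$, and then to deduce the $R/J$ statement by recognizing the quotient maps as submatrices indexed by standard monomials. Write $f = \sum_{\ba} c_{\ba} x^{\ba}$, where the sum runs over monomials of degree $\deg f$ and $c_{\ba} \in \K$, with the convention $c_{\ba} = 0$ whenever $\ba$ has a negative entry. First I would record the matrix of $\times f \colon R_{i - \deg f} \to R_i$ in the monomial bases: for a monomial $x^{\bu}$ with $|\bu| = i - \deg f$ one has $f x^{\bu} = \sum_{\ba} c_{\ba} x^{\ba + \bu}$, so the coefficient of a degree-$i$ monomial $x^{\mathbf{w}}$ is $c_{\mathbf{w} - \bu}$. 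Transposing, $\times f^T \colon R_i \to R_{i - \deg f}$ is represented by the matrix $M$ with rows indexed by the $x^{\bu}$ (with $|\bu| = i - \deg f$), columns indexed by the $x^{\mathbf{w}}$ (with $|\mathbf{w}| = i$), and entry $M_{\bu, \mathbf{w}} = c_{\mathbf{w} - \bu}$.

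Next I would compute $f \circ \colon S_{-i} \to S_{\deg f - i}$ in the monomial bases, tracking the grading $\deg y_j = -1$: a monomial $y^{\bb}$ lies in $S_{-i}$ exactly when $b_1 + \dots + b_n = i$, and $y^{\mathbf{c}}$ lies in $S_{\deg f - i}$ exactly when $c_1 + \dots + c_n = i - \deg f$. From the definition of contraction, $f \circ y^{\bb} = \sum_{\ba} c_{\ba}\, y^{\bb - \ba}$ with the same vanishing convention, so the coefficient of $y^{\mathbf{c}}$ is $c_{\bb - \mathbf{c}}$; that is, $f \circ$ is represented by the matrix with rows $y^{\mathbf{c}}$, columns $y^{\bb}$, and entry $c_{\bb - \mathbf{c}}$. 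Under the tautological identification $x^{\be} \leftrightarrow y^{\be}$ — which matches $R_i$ with $S_{-i}$ and $R_{i - \deg f}$ with $S_{\deg f - i}$ source-to-source and target-to-target — the two entry formulas $c_{\mathbf{w} - \bu}$ and $c_{\bb - \mathbf{c}}$ coincide, so the matrices are literally equal. This settles the first assertion.

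For the second assertion I would use that the standard monomials (those not in $J$) form a $\K$-basis of each graded piece of $R/J$. The map $\times f \colon (R/J)_{i - \deg f} \to (R/J)_i$ sends a standard monomial $x^{\bu}$ to $f x^{\bu}$ reduced modulo $J$; since the coefficient of a standard target monomial $x^{\mathbf{w}}$ is unchanged by this reduction, its matrix is exactly the submatrix of the $R$-level matrix keeping only rows and columns indexed by standard monomials. Transposing, the matrix of $\times f^T \colon (R/J)_i \to (R/J)_{i - \deg f}$ is the corresponding submatrix of $M$, namely $M$ with the rows and columns indexed by monomials in $J$ deleted, as claimed. The only thing to watch here — and the main (mild) obstacle — is the bookkeeping in this last step: one must check that transposition carries the standard-monomial submatrix of $\times f$ to the standard-monomial submatrix of $\times f^T$ rather than to some larger matrix, and that the negative grading on $S$ lines up correctly under the monomial identification. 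Once the index conventions are fixed, the statement reduces to a direct comparison of coefficient formulas.
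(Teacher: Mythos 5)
Your proof is correct. Note that the paper itself gives no argument for this lemma (it is quoted verbatim from \cite[Lemma 5.3]{H2025B}), but your direct monomial-basis computation — identifying both matrices entrywise as $c_{\mathbf{w}-\bu}$ under the tautological correspondence $x^{\be}\leftrightarrow y^{\be}$, and then observing that reduction modulo a monomial ideal leaves the coefficients on standard monomials untouched, so that passing to $R/J$ amounts to deleting the rows and columns indexed by monomials in $J$ — is precisely the standard argument such a statement receives, and it is complete as written.
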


Throughout the next sections, we will apply the results above to explore the existence (and nonexistence) of unexpected systems of parameters.

\section{$2$-Unexpected systems of parameters and balanced spheres}

We begin by showing that unexpected systems of parameters have appeared before under different contexts in the literature, and some of the questions arising from them (see~\cref{s:questions}) have even been studied before in special cases.

In 1979, Stanley~\cite{S1979} began the study of (Cohen-Macaulay) balanced complexes. One important theme in the study of these complexes is that a coloring $\rho: V \to [d + 1]$ of a balanced $d$-dimensional complex $\Delta$ on vertex set $V$ can be used to study special invariants of $\Delta$, which often correspond to invariants of a finer grading of the ring $R/I_\Delta$, where $\deg x_v = \be_{\rho(v)}$, the $\rho(v)$-canonical basis element of $\R^{d+1}$. The first example of this strategy of using a coloring of $\Delta$ to induce a finer grading on $R/I_\Delta$ can be found in~\cite{S1979}, where Stanley showed the following.

\begin{proposition}[{\cite[Corollary 4.2]{S1979}}]\label{p:coloredsop}
    Let $\Delta$ be a $d$-dimensional Cohen-Macaulay balanced complex on vertex set $V$ and let $\rho: V \to [d+1]$ be a coloring of $\Delta$. Then the set of linear forms 
    $$
        \theta_{i} = \sum_{v \in \rho^{-1}(i)} x_v \qforevery i \in [d+1]
    $$
    is a system of parameters of $I_\Delta$.
\end{proposition}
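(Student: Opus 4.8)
The plan is to show directly that the quotient $R/(I_\Delta + (\theta_1, \dots, \theta_{d+1}))$ is artinian. Since $\Delta$ is $d$-dimensional we have $\dim R/I_\Delta = d+1$, and we are adjoining exactly $d+1$ homogeneous (indeed linear) forms; thus the $\theta_i$ form a system of parameters precisely when this quotient has Krull dimension zero, i.e. when the only homogeneous prime of $R$ containing $I_\Delta + (\theta_1, \dots, \theta_{d+1})$ is the graded maximal ideal $\m$.

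First I would reduce the problem to a single facet at a time using the primary decomposition of the Stanley--Reisner ideal. Writing $P_F = (x_v \st v \notin F) \subset R$ for each facet $F$ of $\Delta$, we have $I_\Delta = \bigcap_F P_F$, and since $\prod_F P_F \subseteq I_\Delta$, any prime $P \supseteq I_\Delta$ must contain some factor $P_F$. Consequently, if $P \supseteq I_\Delta + (\theta_1, \dots, \theta_{d+1})$ then $P \supseteq P_F + (\theta_1, \dots, \theta_{d+1})$ for some facet $F$. Hence it suffices to prove that
$$
    P_F + (\theta_1, \dots, \theta_{d+1}) = \m \qforevery \text{facet } F,
$$
for then every such $P$ contains $\m$ and therefore equals $\m$.

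The combinatorial heart of the argument is the computation of $\theta_i$ modulo $P_F$. Since $\Delta$ is Cohen--Macaulay it is pure, so every facet $F$ has exactly $d+1$ vertices. Any two vertices of $F$ span an edge of $\Delta$, so the coloring $\rho$ assigns them distinct colors; as only $d+1$ colors are available, $F$ contains exactly one vertex of each color. Thus for each $i$ the set $\rho^{-1}(i) \cap F$ is a single vertex $v_i$, and working modulo $P_F$ (that is, setting $x_v = 0$ for $v \notin F$) we get $\theta_i \equiv x_{v_i}$. Since $\{v_1, \dots, v_{d+1}\} = F$, the forms $\theta_1, \dots, \theta_{d+1}$ recover all variables indexed by $F$, so $P_F + (\theta_1, \dots, \theta_{d+1})$ contains $(x_v \st v \notin F) + (x_v \st v \in F) = \m$, giving the desired equality.

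The only real subtlety is the passage from ``$\Delta$ is balanced'' to ``each facet meets each color class exactly once,'' which is where purity (and hence the Cohen--Macaulay hypothesis) is essential: without purity a lower-dimensional facet could be missed by some $\theta_i$, and then $P_F + (\theta_1, \dots, \theta_{d+1})$ might fail to equal $\m$. Everything else is the standard equivalence, for graded algebras, between artinianness and Krull dimension zero, together with the prime-avoidance step applied to the minimal primes of $I_\Delta$.
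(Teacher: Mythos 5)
Your proof is correct. There is actually nothing in the paper to compare it against: the proposition is stated with the citation [Corollary 4.2, S1979] and no proof is reproduced, so the result is imported from Stanley rather than proved in the paper. Your argument --- decompose $I_\Delta = \bigcap_F P_F$ over the facets, note that any prime containing $I_\Delta$ must contain some $P_F$ since it contains the product $\prod_F P_F$, then use purity (which, as you say, follows from Cohen--Macaulayness) and the coloring condition to conclude that each facet contains exactly one vertex of each color, whence $P_F + (\theta_1,\dots,\theta_{d+1}) = \m$ --- is the standard direct verification of Stanley's result and is complete. Two small cosmetic remarks: the step you label ``prime avoidance'' is really the complementary fact that a prime containing a product of ideals contains one of the factors; and your restriction to homogeneous primes is legitimate because the minimal primes of a homogeneous ideal are themselves homogeneous, a point worth making explicit since the whole argument runs through it.
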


The system of parameters in~\cref{p:coloredsop} is called the \emph{colored sop} of $\Delta$, with respect to the coloring $\rho$. For a balanced $d$-sphere $\Delta$, the Lefschetz properties of the ring $A = \frac{R}{I_\Delta + (\theta_1, \dots, \theta_{d+1})}$ were studied in~\cite{JKM2018,CJKMN2018}. In particular, in~\cite{CJKMN2018} the authors conjecture that the ring $A$ satisfies the SLP, which they call the \emph{colored SLP} of $\Delta$.

More recently in a different context, in 2024 Dao and Nair~\cite{DN2024} studied of the WLP of algebras of the form $A_\Delta(2) = \frac{R}{I_\Delta + (x_1^2, \dots, x_n^2)}$ from the perspective of Stanley-Reisner theory. They showed the following.

\begin{theorem}[{\cite[Section 4]{DN2024}}]\label{t:daonair}
    Let $\Delta$ be a $d$-dimensional pseudomanifold without boundary, $A = A_\Delta(2)$ and $L = x_1 + \dots + x_n$. Then the multiplication map 
    $$
        \times L: A_{d} \to A_{d + 1}
    $$
    is not surjective if and only if $G(\Delta)$ is bipartite. In particular, the map fails to have full rank if and only if $G(\Delta)$ is bipartite.
\end{theorem}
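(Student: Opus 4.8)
The plan is to make the map $\times L$ completely explicit on the monomial basis of $A$ and to recognize it as the incidence map of the facet-ridge graph $G(\Delta)$. Since $A = A_\Delta(2)$ is the quotient of $R$ by $I_\Delta + (x_1^2, \dots, x_n^2)$, its standard monomials are exactly the squarefree monomials $x_F = \prod_{i \in F} x_i$ with $F \in \Delta$. As $\Delta$ is a pure $d$-dimensional complex, this yields the basis $\{x_G\}$ of $A_d$ indexed by the ridges $G$ of $\Delta$, and the basis $\{x_F\}$ of $A_{d+1}$ indexed by the facets $F$. I would work over a field of characteristic $\neq 2$ (in particular characteristic zero), which is where the dichotomy below genuinely uses $2 \neq 0$.

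First I would compute $L \cdot x_G$ for a ridge $G$. The terms $x_i x_G$ with $i \in G$ vanish since $x_i^2 = 0$ in $A$, while for $i \notin G$ the term $x_{G \cup \{i\}}$ survives precisely when $G \cup \{i\}$ is a face, i.e. a facet. Thus $L \cdot x_G = \sum x_F$, the sum over facets $F \supseteq G$. Here the hypothesis that $\Delta$ is a pseudomanifold without boundary is decisive: every ridge lies in exactly two facets (purity gives at least one, condition (3) gives at most two, and $\partial \Delta = \emptyset$ rules out exactly one), so $L \cdot x_G = x_{F_1} + x_{F_2}$, the sum of the two facets sharing $G$. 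Consequently the matrix of $\times L: A_d \to A_{d+1}$ in these bases is exactly the unsigned vertex-edge incidence matrix $B$ of $G(\Delta)$, whose vertices are the facets and whose edges are the ridges. I would also note that $G(\Delta)$ is connected (this is precisely strong connectedness of $\Delta$), and that it is simple and $(d+1)$-regular (each facet contains $d+1$ ridges, each shared with a unique other facet).

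The core of the proof is then a classical rank computation for $B$, which I would carry out directly. The map $\times L$ is surjective iff $B$ has full row rank iff its left null space vanishes. A vector $c = (c_F)$ satisfies $cB = 0$ iff $c_{F_1} + c_{F_2} = 0$ for every edge $\{F_1, F_2\}$ of $G(\Delta)$. On the connected graph $G(\Delta)$ a nonzero such $c$ exists iff $G(\Delta)$ is bipartite: if it is, put $c \equiv 1$ on one color class and $-1$ on the other; if it is not, following the relation $c_{F'} = -c_F$ around an odd closed walk forces $2c_F = 0$, hence $c_F = 0$ for all $F$ because the characteristic is not $2$. Therefore $\times L$ fails to be surjective iff $G(\Delta)$ is bipartite. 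This is the step I expect to be the main obstacle, since it is where the topological and combinatorial hypotheses (no boundary, strong connectedness) get converted into the purely linear-algebraic bipartiteness criterion, and where the characteristic assumption is essential.

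Finally, for the statement about full rank I would compare dimensions. Being connected and $(d+1)$-regular, $G(\Delta)$ has $f_d$ vertices and $\tfrac{(d+1)f_d}{2} = f_{d-1}$ edges, so $\dim_\K A_d = f_{d-1} \geq f_d = \dim_\K A_{d+1}$ whenever $d \geq 1$. Hence full rank of $\times L: A_d \to A_{d+1}$ is the same as surjectivity, and ``fails to have full rank'' coincides with ``is not surjective''; the equivalence with bipartiteness is then immediate from the previous paragraph.
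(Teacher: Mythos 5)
Your proof is correct and takes essentially the same route the paper relies on: the paper cites \cite[Section 4]{DN2024} rather than reproving the theorem, but both there and in the paper's own later arguments the key steps are exactly yours --- identifying the matrix of $\times L \colon A_d \to A_{d+1}$ with the signless incidence matrix of the connected graph $G(\Delta)$ (compare \cite[Lemma 23]{H2024}, invoked in the proof of Lemma~\ref{l:macaulaydualcolored}) and then applying the classical rank formula for incidence matrices \cite[Theorem 8.2.1]{G2001}. Your explicit characteristic-$\neq 2$ hypothesis is the correct reading of the (implicit) assumption, since in characteristic $2$ the all-ones vector always lies in the left kernel and the ``only if'' direction fails.
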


In view of~\cref{t:bipartitebalanced}, a weaker version of~\cref{t:daonair} could be stated in terms of the balanced property of $\Delta$. We now show that this connection is in fact enough to characterize which simplicial spheres have $2$-unexpected systems of parameters.

\begin{lemma}\label{l:macaulaydualcolored}
    Let $\Delta = \tuple{\sigma_1, \dots, \sigma_s}$ be a balanced $d$-dimensional $\K$-homology sphere on vertex set $[n]$, $\rho$ a coloring of $\Delta$ and $\theta_1, \dots, \theta_{d+1}$ the corresponding colored sop. Then the Macaulay dual generator of $I_\Delta + (\theta_1, \dots, \theta_{d+1})$ is the polynomial 
    $$
        F_{\Delta, \rho} = \sum_{\sigma \in B_1} x_\sigma - \sum_{\sigma \in B_2} x_\sigma, 
    $$
    where $x_\sigma = \prod_{i \in \sigma} x_i$ and $B_1 \cup B_2$ is a bipartition of $G(\Delta)$.
\end{lemma}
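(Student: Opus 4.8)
The plan is to use Macaulay duality (\cref{t:macaulay}) to turn the statement into a single contraction computation. First I would record that $R/I_\Delta$ is Gorenstein, since a $\K$-homology sphere has Gorenstein face ring (\cite[Theorem II.5.1]{S1996B}); as $\theta_1, \dots, \theta_{d+1}$ is the colored linear system of parameters of \cref{p:coloredsop}, the quotient $A = R/(I_\Delta + (\theta_1, \dots, \theta_{d+1}))$ is an artinian reduction of a Gorenstein ring and hence again artinian Gorenstein. Its socle degree equals $\deg h_\Delta(x) = d+1$, because the $h$-vector of a homology sphere is symmetric with $h_{d+1} = h_0 = 1$. By \cref{t:macaulay} the inverse system $(I_\Delta + (\theta_1, \dots, \theta_{d+1}))^{-1}$ is then cyclic, generated by a homogeneous polynomial which, viewed in $R$, has degree $d+1$; moreover $\dim (I_\Delta + (\theta_1, \dots, \theta_{d+1}))^{-1}_{-(d+1)} = \dim A_{d+1} = 1$. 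Consequently it suffices to prove that $F_{\Delta, \rho}$ is a \emph{nonzero} element of this inverse system sitting in degree $d+1$; any such element is the Macaulay dual generator up to a scalar.

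\textbf{Nonvanishing and degree.} These are immediate. Each facet $\sigma$ of the $d$-sphere $\Delta$ has $d+1$ vertices, so $x_\sigma$ is a squarefree monomial of degree $d+1$, and distinct facets give distinct monomials. Since $B_1 \cup B_2$ is a partition of the facet set, $F_{\Delta, \rho}$ is a $\{0, \pm 1\}$-combination of distinct monomials, hence nonzero. The bipartition exists because $G(\Delta)$ is bipartite by \cref{t:bipartitebalanced}, and it is unique up to swapping $B_1$ and $B_2$ as $G(\Delta)$ is connected; the swap only changes $F_{\Delta, \rho}$ by a global sign, which is irrelevant.

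\textbf{Membership for the monomial generators.} Since contraction makes $S$ an $R$-module, it is enough to check $g \circ F_{\Delta, \rho} = 0$ on a generating set of the ideal. For a monomial generator $x_\tau$ of $I_\Delta$ the set $\tau$ is a non-face of $\Delta$, so by downward-closure $\tau$ is contained in no facet $\sigma$; therefore $x_\tau \circ x_\sigma = 0$ for every facet $\sigma$, and $x_\tau \circ F_{\Delta, \rho} = 0$. For a colored form $\theta_i = \sum_{\rho(v) = i} x_v$, balancedness forces every facet (a $d$-simplex, hence a clique in the $1$-skeleton) to contain exactly one vertex $v_i(\sigma)$ of each color $i$, so $\theta_i \circ x_\sigma = x_{\sigma \setminus v_i(\sigma)}$, a monomial supported on the ridge $\sigma \setminus v_i(\sigma)$, which uses every color except $i$.

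\textbf{The crux.} The main obstacle is to show these ridge monomials cancel in pairs in $\theta_i \circ F_{\Delta, \rho} = \sum_{\sigma \in B_1} x_{\sigma \setminus v_i(\sigma)} - \sum_{\sigma \in B_2} x_{\sigma \setminus v_i(\sigma)}$. Here I would invoke that a $\K$-homology sphere is a pseudomanifold without boundary, so every ridge lies in exactly two facets. A ridge $\tau$ missing color $i$ has its two facets of the form $\tau \cup \{v\}$ and $\tau \cup \{v'\}$ with $\rho(v) = \rho(v') = i$, and each of these produces precisely the monomial $x_\tau$ upon contraction by $\theta_i$. These two facets share the ridge $\tau$, so they are adjacent in $G(\Delta)$ and hence lie in opposite parts of the bipartition; thus $x_\tau$ occurs once with coefficient $+1$ and once with $-1$. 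Since every monomial appearing in the sum arises exactly in this way, the sum vanishes and $\theta_i \circ F_{\Delta, \rho} = 0$ for all $i$. Combined with the previous paragraph this exhibits $F_{\Delta, \rho}$ as a nonzero degree-$(d+1)$ element of $(I_\Delta + (\theta_1, \dots, \theta_{d+1}))^{-1}$, which by the first paragraph is exactly the Macaulay dual generator.
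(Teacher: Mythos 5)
Your proof is correct, and while the outer frame matches the paper's, your key step is genuinely different. Both arguments use Macaulay duality to reduce the lemma to exhibiting a nonzero degree-$(d+1)$ form killed by the generators of $I_\Delta$ (immediate, as you say) and by each $\theta_i$. At that point the paper works inside $A = A_\Delta(2)$: it applies \cref{l:contraction}, identifies the matrix of $\times L^T \colon A_{d+1} \to A_d$ with the signless incidence matrix of $G(\Delta)$ (citing \cite{H2024}), invokes the kernel theorem for incidence matrices of connected bipartite graphs \cite{G2001} to conclude $L \circ F_{\Delta,\rho} = 0$, and only then recovers the individual identities $\theta_i \circ F_{\Delta,\rho} = 0$ by a color-separation contradiction: a ridge monomial surviving in $\times\theta_1^T(F_{\Delta,\rho})$ misses only color $1$, so it cannot cancel against any other summand of $\times L^T(F_{\Delta,\rho}) = \sum_i \times\theta_i^T(F_{\Delta,\rho}) = 0$. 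You instead compute $\theta_i \circ F_{\Delta,\rho}$ directly in $S$: each facet contributes the ridge obtained by deleting its unique color-$i$ vertex; a ridge missing color $i$ lies in exactly two facets because a homology sphere is a pseudomanifold without boundary (equivalently, the link of a ridge is a $0$-dimensional homology sphere); and those two facets, being adjacent in $G(\Delta)$, lie in opposite parts of the bipartition, so the two occurrences of each ridge monomial cancel — and your pairing is exhaustive, since two facets in the same part can never share a ridge. Your route is more elementary and self-contained, handling all $\theta_i$ uniformly and dispensing with \cref{l:contraction}, the incidence-matrix identification, and the graph-theoretic kernel theorem; what the paper's heavier route buys is the byproduct that $\ker \times L^T \colon A_\Delta(2)_{d+1} \to A_\Delta(2)_d$ is one-dimensional and spanned by $F_{\Delta,\rho}$, which is precisely the uniqueness input reused in \cref{c:unexpectedcolored}, so your argument would not replace the paper's proof wholesale without re-deriving that fact there.
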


\begin{proof}
    First note that since $\Delta$ is a balanced homology sphere,~\cref{t:bipartitebalanced} implies $G(\Delta)$ is bipartite and hence it is posible to partition $V(G(\Delta))$ into two disjoint independent sets $B_1, B_2$. Since we know $I_\Delta + (\theta_1, \dots, \theta_{d+1})$ is an artinian Gorenstein ideal of socle degree $d+1$, we only need to find a homogeneous polynomial $F$ of degree $d+1$ such that $m \circ F = \theta_i \circ F = 0$ for every $i$ and every generator $m$ of $I_\Delta$. The result would then follow since Macaulay duality gives a bijection between artinian Gorenstein algebras of the form $R/I$ and homogeneous polynomials in $R = \K[x_1,\dots, x_n]$.

    By the definition of $F_{\Delta, \rho}$ we have $m \circ F_{\Delta, \rho} = 0$ for every generator $m$ of $I_\Delta$. Hence we only need to show that $\theta_i \circ F_{\Delta, \rho} = 0$ for every $i$. Since the support of every monomial in $F_{\Delta, \rho}$ is a face of $\Delta$, we may view $F_{\Delta, \rho}$ as an element of $A = \frac{R}{I_\Delta + (x_1^2, \dots, x_n^2)}$. Finally, by~\cref{l:contraction} we only need to show $F_{\Delta, \rho}$ is in the kernel of the map $\times \theta_i^T: A_{d + 1} \to A_{d}$ for every $i$.
    
    By the definition of the colored sop of $\Delta$ we have $L = x_1 + \dots + x_n = \theta_1 + \dots + \theta_{d+1}$.
    It is known (see for example~\cite[Lemma 23]{H2024}) that the matrix $M$ representing the map $\times L^T: A_{d+1} \to A_d$ is equal to the signless incidence matrix of $G(\Delta)$. As a consequence, since $G(\Delta)$ is a connected bipartite graph we conclude the kernel of $M$ is spanned exactly by $F_{\Delta, \rho}$ (see for example~\cite[Theorem 8.2.1]{G2001}). Since every facet of $\Delta$ contains exactly one vertex of each color, we conclude for every ridge $\tau$ there exists a unique color $c_\tau$ such that $\tau \cap \rho^{-1}(c_\tau) = \emptyset$. 
    
    Next, if $\times \theta_i^T(F_{\Delta, \rho}) = 0$ for all $i$ we are done. Assume without loss of generality that $\times \theta_1^T(F_{\Delta, \rho}) \neq 0$, and let $\tau$ be a ridge of $\Delta$ such that $x_\tau = \prod_{j \in \tau} x_j$ is a monomial of $\times \theta_1^T(F_{\Delta, \rho})$ with a nonzero coefficient. Since $\tau$ contains vertices of every color other than $1$, we conclude the coefficient of $x_\tau$ in $\times \theta_i^T(F_{\Delta, \rho})$ is zero for every $i \neq 1$. This is a contradiction since
    $$
        \times \theta_1^T(F_{\Delta, \rho}) + \dots + \times \theta_{d+1}^T(F_{\Delta, \rho}) = \times L^T(F_{\Delta, \rho}) = 0
    $$
    and $\times \theta_1^{T}(F_{\Delta, \rho})$ cannot cancel with any other term in the sum above.
\end{proof}

\begin{remark}\label{r:boundary}
    Although we prove~\cref{l:macaulaydualcolored} without mentioning the last boundary map of $\Delta$, similarly to the proof of~\cite[Theorem 4.2]{H2025B} (see also~\cite[Remark 4.3]{H2025B}), the key point of~\cref{l:macaulaydualcolored} is the fact that $\times L^T: A_{d+1} \to A_d$ is essentially the last boundary map of $\Delta$. In other words, when $\Delta$ is a balanced $\K$-homology sphere, the map $L \circ$ mimics the boundary map of $\Delta$ when it is applied to monomials of the form $x_{\sigma_i} = \prod_{j \in \sigma_i} x_j$.
\end{remark}

We are now ready to show the main result of this section.

\begin{theorem}[\emph{Unexpected systems of parameters and balanced spheres}]\label{t:coloredunexpected}
    Let $\Delta$ be a $d$-dimensional simplicial sphere with $d > 0$. Then $\Delta$ has an $2$-unexpected system of parameters if and only if $\Delta$ is balanced. In this case, the colored sop is an example of an $2$-unexpected sop of $\Delta$.
\end{theorem}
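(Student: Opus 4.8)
\subsection*{Proof proposal}

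The plan is to translate both implications into statements about the facet–ridge graph $G(\Delta)$ and then feed them through the dictionary already built in \cref{l:macaulaydualcolored,t:daonair,t:bipartitebalanced}. Two preliminary observations organize everything. First, since $\Delta$ is a simplicial sphere, $R/I_\Delta$ is Gorenstein, so its $h$-vector is symmetric with $h_0 = h_{d+1} = 1$; hence $\deg h_\Delta(x) = d+1$, and \ref{i:2} reads $t = \deg\theta_1 + \dots + \deg\theta_{d+1}$, which is at least $d+1$ because a system of parameters consists of forms of positive degree. Second, the standard monomials of $A_\Delta(2)$ are exactly the squarefree monomials $x_\sigma$ with $\sigma \in \Delta$, so $\HF(i, A_\Delta(2)) = f_{i-1}(\Delta)$. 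Throughout set $A = A_\Delta(2)$.

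For the ``if'' direction I take $\Delta$ balanced with a coloring $\rho$ and verify that its colored sop $\theta_1, \dots, \theta_{d+1}$ satisfies \ref{i:1}--\ref{i:5}. Condition \ref{i:1} is \cref{p:coloredsop}; since the $\theta_i$ are linear, \ref{i:2} gives $t = d+1$; and \ref{i:3} holds because $L = \theta_1 + \dots + \theta_{d+1}$. For \ref{i:4} I invoke \cref{l:macaulaydualcolored}: the Macaulay dual generator $F_{\Delta,\rho}$ of $I_\Delta + (\theta_1, \dots, \theta_{d+1})$ is squarefree, so $x_i^2 \circ F_{\Delta,\rho} = 0$ and therefore $x_i^2 \in I_\Delta + (\theta_1, \dots, \theta_{d+1})$ by \cref{t:macaulay}. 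Finally, \ref{i:5} with $t = d+1$ asks that $f_d(\Delta) \le f_{d-1}(\Delta)$; this follows from the pseudomanifold identity $(d+1)f_d = 2 f_{d-1}$ (each facet has $d+1$ ridges, each ridge lies in exactly two facets), which yields $f_{d-1} = \tfrac{d+1}{2} f_d \ge f_d$ for $d \ge 1$.

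For the ``only if'' direction, suppose $\theta_1, \dots, \theta_{d+1}$ is a $2$-unexpected sop of total degree $t$. Because $\theta_1, \dots, \theta_{d+1}$ is a system of parameters on the Gorenstein, hence Cohen--Macaulay, ring $R/I_\Delta$, it is a regular sequence, so $R/(I_\Delta + (\theta_1,\dots,\theta_{d+1}))$ is artinian Gorenstein, and a Hilbert-series computation shows its socle degree equals $t$; by \cref{t:macaulay} its inverse system is generated by a single nonzero form $F$ of degree $t$. Conditions \ref{i:3} and \ref{i:4} force $L \circ F = 0$ and $x_i^2 \circ F = 0$, so $F$ is multilinear, while $m \circ F = 0$ for the monomial generators $m$ of $I_\Delta$ forces the support of every monomial of $F$ to be a face of $\Delta$ (contraction by a single squarefree monomial $x_\tau$ separates cleanly, so $c_\sigma \ne 0$ implies $\sigma$ contains no minimal non-face). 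Thus $F$ is a nonzero combination of squarefree monomials $x_\sigma$ with $\sigma \in \Delta$ and $|\sigma| = t$, whence $\Delta$ has a face of cardinality $t$ and $t \le d+1$. Together with $t \ge d+1$ this gives $t = d+1$, so the sop is linear and $F$ is supported on the facets of $\Delta$.

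It remains to read off balancedness. The nonzero form $F$ lies in the kernel of $\times L^T\colon A_{d+1} \to A_d$, so this map is not injective; transposing via \cref{l:contraction} shows $\times L\colon A_d \to A_{d+1}$ is not surjective, whence $G(\Delta)$ is bipartite by \cref{t:daonair}, and \cref{t:bipartitebalanced} then gives that $\Delta$ is balanced. I expect the only real obstacle to be the degree bookkeeping that pins $t$ to $d+1$: the lower bound $t \ge d+1$ is immediate from \ref{i:2}, but the matching upper bound $t \le d+1$ is precisely what forces a sphere's $2$-unexpected sop to be linear, and it rests on the multilinear-and-face-supported description of the inverse-system generator extracted from \ref{i:3}--\ref{i:4}. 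Once linearity is secured, everything reduces to the already-established equivalence between bipartiteness of $G(\Delta)$, balancedness, and the colored sop.
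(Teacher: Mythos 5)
Your proof is correct and follows essentially the same route as the paper: the colored sop is verified against \ref{i:1}--\ref{i:5} using \cref{p:coloredsop}, \cref{l:macaulaydualcolored} and the pseudomanifold double count, while the converse pins $t = d+1$ (hence linearity of the sop) and then passes through \cref{t:daonair} and \cref{t:bipartitebalanced}. Your only deviation is cosmetic: you derive $t \le d+1$ from the squarefree, face-supported description of the Macaulay dual generator $F$, whereas the paper phrases the same fact as $F$ being a nonzero element of $A_\Delta(2)$, whose socle degree is $d+1$.
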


\begin{proof}
    Let $\theta_1, \dots, \theta_{d+1}$ be the colored sop of $\Delta$ given by a coloring $\rho$ of $\Delta$ and let $L$ be the linear form given by the sum of variables of $R = \K[x_1, \dots, x_n] \supset I_\Delta$.
    Assume first that $\Delta$ is balanced. Note that $L = \theta_1 + \dots + \theta_{d+1}$, hence $L \in I_\Delta + (\theta_1, \dots \theta_{d+1})$. \cref{l:macaulaydualcolored} directly shows $x_i^2 \in I_\Delta + (\theta_1, \dots, \theta_{d+1})$ for every $i$ since $x_i^2 \circ F_{\Delta, \rho} = 0$. The only missing property to conclude $\theta_1, \dots, \theta_{d+1}$ is an $2$-unexpected sop is that $\HF(d + 1, R/J) \leq \HF(d, R/J)$, where $J = I_\Delta + (x_1^2, \dots, x_n^2)$. This follows since $\HF(i, R/J) = f_{i-1}(\Delta)$ and since $\Delta$ is a $d$-dimensional orientable pseudomanifold without boundary and $d > 0$, a double counting argument gives $f_{d+1} = \frac{2}{d+1}f_d \leq f_{d}$. 

    Assume now that $\Delta$ has an $2$-unexpected system of parameters $\theta_1, \dots, \theta_{d+1}$ of total degree $t \geq d + 1$. Then by definition we know:
    \begin{enumerate}
        \item[\ref{i:1}] $I_\Delta + (\theta_1, \dots, \theta_{d+1})$ is an artinian Gorenstein ideal and hence has a Macaulay dual generator $F$, 
        \item[\ref{i:3}] $L \in I_\Delta + (\theta_1, \dots, \theta_{d+1})$, 
        \item[\ref{i:4}] $x_i^2 \in I_\Delta + (\theta_1, \dots, \theta_{d+1})$, and
        \item[\ref{i:5}] $\HF(t, R/J) \leq \HF(t - 1, R/J)$, where $J = I_\Delta + (x_1^2, \dots, x_n^2)$.
    \end{enumerate}
    By \ref{i:5} above and since the socle degree of $R/J$ is $d+1$ we conclude $t \leq d + 1$, but since by definition we have $t \geq d + 1$, we must have $t = d + 1$.
    Since $\theta_1, \dots, \theta_{d+1}$ is a sop of $I_\Delta$, it is a well-known fact (see for example~\cite[Lemma 3.5]{H2025B}) the Hilbert series of $\frac{R}{I_\Delta + (\theta_1, \dots, \theta_{d+1})}$ is equal to 
    $$
        h_\Delta(x) \prod_{i = 1}^{d+1}(1 + x + \dots + x^{\deg \theta_i - 1}).
    $$ 
    In particular, $\deg h_\Delta(x) + \deg \theta_1 + \dots + \deg \theta_{d+1} - d - 1 = d + 1$, and since $\deg h_\Delta(x) = d + 1$ we conclude $\theta_1, \dots, \theta_{d+1}$ is a linear system of parameters of $I_\Delta$. By~\cref{t:sopfailure} we know the existence of the polynomial $F$ says the algebra $R/J$ fails the WLP in degree $\deg F = d + 1$ due to surjectivity. By~\cref{t:daonair} we conclude the facet-ridge graph of $\Delta$ is bipartite and finally by~\cref{t:bipartitebalanced} we conclude $\Delta$ is balanced.
\end{proof}

Note that a consequence of~\cref{t:coloredunexpected} is that an $2$-unexpected sop must be linear. We can prove the following stronger statement.

\begin{corollary}[\emph{A characterization of unexpected linear systems of parameters}]\label{c:unexpectedcolored}
    Let $\Delta$ be a $d$-dimensional balanced simplicial sphere and $\theta_1', \dots, \theta_{d+1}'$, $\theta_1'', \dots, \theta_{d+1}''$ be two $2$-unexpected sops of $\Delta$. Then 
    $$
        I_\Delta + (\theta_1', \dots, \theta_{d+1}') = I_\Delta + (\theta_1'', \dots, \theta_{d+1}'').
    $$
    In particular, we can take $\theta_1', \dots, \theta_{d+1}'$ to be the colored sop of $\Delta$.
\end{corollary}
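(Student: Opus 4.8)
The plan is to show that \emph{every} $2$-unexpected sop of $\Delta$ produces the same Macaulay dual generator up to a nonzero scalar, namely the polynomial $F_{\Delta,\rho}$ of \cref{l:macaulaydualcolored}. Since by \cref{t:macaulay}(3) a homogeneous polynomial determines the associated artinian Gorenstein ideal uniquely, this immediately forces $I_\Delta + (\theta_1', \dots, \theta_{d+1}')$ and $I_\Delta + (\theta_1'', \dots, \theta_{d+1}'')$ to coincide. So the entire corollary reduces to pinning down the dual generator of an arbitrary $2$-unexpected sop.

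First I would invoke the proof of \cref{t:coloredunexpected}: any $2$-unexpected sop $\theta_1, \dots, \theta_{d+1}$ of $\Delta$ is a linear sop of total degree exactly $d+1$, so $J := I_\Delta + (\theta_1, \dots, \theta_{d+1})$ is artinian Gorenstein of socle degree $d+1$ and has a Macaulay dual generator $F$ of degree $d+1$. I then extract the constraints on $F$ coming from the defining conditions of unexpectedness. Because $I_\Delta \subset J$, every generator $m$ of $I_\Delta$ gives $m \circ F = 0$, so each monomial of $F$ is supported on a face of $\Delta$; condition \ref{i:4} gives $x_i^2 \in J$, hence $x_i^2 \circ F = 0$, so $F$ is squarefree; being squarefree of degree $d+1$ with support a face of the pure $d$-complex $\Delta$, each monomial of $F$ equals $x_\sigma$ for a facet $\sigma$. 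Thus $F$ lies in the span of the facet monomials and may be regarded as a top-degree element of $A = A_\Delta(2)$. Finally condition \ref{i:3} gives $L \in J$, so $L \circ F = 0$, and by \cref{l:contraction} the coefficient vector of $F$ lies in the kernel of the matrix representing $\times L^T \colon A_{d+1} \to A_d$.

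The decisive step is the identification of that kernel, and this is exactly where balancedness enters. As in \cref{l:macaulaydualcolored}, the matrix of $\times L^T$ on facet monomials is the signless incidence matrix of the facet-ridge graph $G(\Delta)$; since $\Delta$ is a balanced simplicial sphere, $G(\Delta)$ is connected (strong connectivity of the pseudomanifold) and bipartite (\cref{t:bipartitebalanced}), so its signless incidence matrix has a one-dimensional kernel spanned by $F_{\Delta,\rho}$. Hence $F = c\,F_{\Delta,\rho}$ for some nonzero $c \in \K$, independently of which $2$-unexpected sop we started from. Applying this to both $\theta'$ and $\theta''$ and quoting \cref{t:macaulay}(3) gives the equality of ideals, and \cref{l:macaulaydualcolored} shows the colored sop realizes the same dual generator $F_{\Delta,\rho}$, which yields the final clause. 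I expect the genuinely new bookkeeping here to be the reduction of an \emph{arbitrary} unexpected dual generator to the facet span (ruling out variable squares and off-$\Delta$ support); the one-dimensionality of the kernel itself is borrowed wholesale from \cref{l:macaulaydualcolored} and is not the obstacle.
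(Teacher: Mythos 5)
Your proposal is correct and follows essentially the same route as the paper: both arguments identify the Macaulay dual generator of any $2$-unexpected sop with an element of the one-dimensional kernel of $\times L^T\colon A_\Delta(2)_{d+1}\to A_\Delta(2)_d$ (the signless incidence matrix of the connected bipartite graph $G(\Delta)$), conclude $F = c\,F_{\Delta,\rho}$, and finish by Macaulay duality. The only difference is cosmetic: you spell out explicitly the reduction of an arbitrary dual generator to the span of facet monomials (via $I_\Delta \subset J$, $x_i^2 \in J$, and purity of $\Delta$), a step the paper's proof leaves implicit in its appeal to \cref{t:sopfailure}.
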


\begin{proof}
    Let $A = A_\Delta(2)$ and $L = x_1 + \dots + x_n$. By~\cref{t:daonair} and since the multiplication map $\times L: A_{d} \to A_{d+1}$ is represented by the transpose of the (signless) incidence matrix of the connected graph $G(\Delta)$ (see for example~\cite[Lemma 23]{H2024}), we know the rank of this map is exactly one less than the maximal possible rank~\cite[Theorem 8.2.1]{G2001}. In other words, we have $\dim \ker \times L^T: A_{d+1} \to A_d = 1$. Let $\rho$ be a coloring of the $1$-skeleton of $\Delta$, $\theta_1, \dots, \theta_{d+1}$ the corresponding colored sop and $F_{\Delta, \rho}$ the Macaulay dual generator of $I_\Delta + (\theta_1, \dots, \theta_{d+1})$. By~\cref{t:coloredunexpected} we know this colored sop is an $2$-unexpected sop of $\Delta$, and by~\cref{t:sopfailure} we know $F_\rho$ generates the kernel of $\times L^T: A_{d+1} \to A_d$. Moreover, for every other $2$-unexpected sop of $\Delta$, the Macaulay dual generator $G$ of the corresponding ideal $I$ is also in the same kernel, and as the dimension of the kernel is $1$ we conclude $ G = c F_\rho$ for some constant $c$. Macaulay duality then implies $I_\Delta + (\theta_1, \dots, \theta_{d+1}) = I$.
\end{proof}

\section{Analytic spread, collapsibility and the nonexistence of unexpected sops}\label{s:collapsible}

Our next goal is to develop techniques that will allow us to prove the nonexistence of unexpected sops for special classes of complexes. Some of our results have consequences to other areas of commutative algebra, namely to the theory of Rees algebras of monomial ideals. This connection is in fact one more aspect of the analogies between the geometric study of unexpectedness, and the notion of unexpected systems of parameters for simplicial complexes introduced earlier. 

Given a finite set of points $X$ in the projective space $\Pp_\K^n$, the interpolation problem consists of studying the vector space dimension of polynomials passing through the points in $X$ with prescribed orders. As is mentioned in~\cite{HMN2024}, the concept of unexpectedness originated from studying the interpolation problem and its variations.

In view of the well known Zariski-Nagata theorem, symbolic powers play a key role in understanding the interpolation problem. In a different context, for a monomial ideal $I$, it is known that the Rees algebra of $I$ is an extremely useful tool for understanding the symbolic powers of $I$. In particular, a sufficient condition that guarantees the symbolic powers of $I$ are not equal to the ordinary powers of $I$ involves the \emph{analytic spread} of $I$ (see~\cite[Theorem 4.34, Proposition 4.39]{MV2012}). For the corresponding theory for ideals of points relating symbolic powers and analytic spread, see for example~\cite{HKZ2022}.

In~\cite{H2024, H2025} it is shown that the analytic spread of special squarefree monomial ideals can be used to detect whether an algebra of the form $A_\Delta(2) = \frac{\K[x_1,\dots, x_n]}{I_\Delta + (x_1^2, \dots, x_n^2)}$ either has or fails the WLP. In this section we use combinatorial constructions arising from the edgewise subdivision of a complex, in order to generalize some of the results from~\cite{H2024, H2025}. Along the way, we also show connections between the topological notion of collapsibility, and the theory of Rees algebras of facet ideals.

\subsection{Analytic spread}
We begin by recalling the necessary notions. Let $I$ be an ideal of $S = \K[x_1, \dots, x_n]$. The \emph{Rees ring} of $I$ is the ring $S[It] = \oplus_{n \in \N} I^n t^n$. Let $\m = (x_1, \dots, x_n)$ be the maximal homogeneous ideal of $S$. The \emph{analytic spread} of $I$, denoted by $\ell(I)$ is the Krull dimension of $S[It]/\m S[It]$. The analytic spread of an ideal $I$ determines whether several important invariants of $I$, such as various notions of multiplicities (see for example~\cite{T2001}) are positive. When $I$ is a squarefree monomial ideal, further connections to symbolic powers are known~\cite{MV2012}. In the monomial setting, there is a concrete method for computing the analytic spread of $I$ using the following matrix.
 
\begin{definition}[\emph{Log matrices}]
    Let $I = (m_1, \dots, m_s)$ be a monomial ideal where $m_i = x_1^{a_{i1}} \dots x_n^{a_{in}}$, with $a_{ij} \geq 0$. The \emph{log matrix} of $I$ is denoted by $\log(I)$ and its entries are given by $\log(I)_{ij} = a_{ij}$. Note that the sum of the $i$-th row is equal to $\deg m_i$.
\end{definition}

\begin{lemma}[{\cite[Lemma 3.2]{MMV2012}}]\label{l:analyticspreadrank}
    Let $I$ be a monomial ideal generated by elements of degree $d$. Then $\ell(I) = \rk \log(I)$.
\end{lemma}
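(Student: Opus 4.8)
The plan is to reduce the computation of $\ell(I)$ to the Krull dimension of a monomial subalgebra of $S$, and then to compute that dimension by a transcendence-degree argument. Recall that by definition $\ell(I) = \dim \mathcal{F}(I)$, where $\mathcal{F}(I) = S[It]/\m S[It] = \bigoplus_{j \geq 0} I^j/\m I^j$ is the special fiber ring. Writing $I = (m_1, \dots, m_s)$ with each $m_i = x^{\ba_i}$ of degree $d$, I would first show that $\mathcal{F}(I)$ is isomorphic, as a graded $\K$-algebra, to the monomial subalgebra $\K[m_1, \dots, m_s] \subset S$ generated by the $m_i$.

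To establish this isomorphism, note that there is a natural surjective $\K$-algebra homomorphism $\varphi \colon \K[m_1, \dots, m_s] \to \mathcal{F}(I)$ induced by $m_i \mapsto \overline{m_i t}$, and the point is that equigeneration forces $\varphi$ to be injective. In $t$-degree $j$, the space $I^j/\m I^j$ has a $\K$-basis given by the minimal monomial generators of $I^j$; since $I^j$ is generated by the products $m_{i_1}\cdots m_{i_j}$, which all have the same degree $jd$, no such product divides another unless they are equal, so the minimal generators are precisely the distinct products. On the other hand, the degree-$j$ piece of $\K[m_1, \dots, m_s]$ has as a basis these same distinct products, viewed as honest monomials in $S$. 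Thus $\varphi$ carries a basis to a basis in each degree and is a graded isomorphism.

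It then remains to compute $\dim \K[m_1, \dots, m_s]$. This ring is a finitely generated $\K$-domain, so its Krull dimension equals the transcendence degree over $\K$ of its fraction field $\K(x^{\ba_1}, \dots, x^{\ba_s})$. That transcendence degree is exactly the rank of the subgroup of $\Z^n$ generated by the exponent vectors $\ba_1, \dots, \ba_s$, equivalently the dimension of their $\Q$-span, since monomials $x^{\ba_{i_1}}, \dots, x^{\ba_{i_r}}$ are algebraically independent over $\K$ if and only if $\ba_{i_1}, \dots, \ba_{i_r}$ are linearly independent over $\Q$. As the rows of $\log(I)$ are precisely the $\ba_i$, this rank equals $\rk \log(I)$. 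Combining the two steps yields $\ell(I) = \dim \mathcal{F}(I) = \dim \K[m_1, \dots, m_s] = \rk \log(I)$.

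The step I expect to require the most care is the injectivity of $\varphi$, that is, the identification $\mathcal{F}(I) \cong \K[m_1, \dots, m_s]$: this is exactly where the equigeneration hypothesis enters (it guarantees the minimal generators of each $I^j$ are the distinct degree-$jd$ products, with none dividing another), and it is precisely what fails for non-equigenerated ideals. By contrast, the final transcendence-degree computation is standard toric algebra and presents no real obstacle.
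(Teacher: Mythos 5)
The paper does not actually prove this lemma; it is quoted directly from the cited reference (Lemma 3.2 there), so there is no internal proof to compare against. Your argument is correct, and it is the standard proof of this fact: identify the special fiber $S[It]/\m S[It]$ of an equigenerated monomial ideal with the toric algebra $\K[m_1,\dots,m_s]$, then compute its Krull dimension as a transcendence degree, which equals the rational rank of the exponent matrix. Two small points are worth making explicit. First, equigeneration is needed not only for injectivity of $\varphi$ but already for its well-definedness: a monomial $w\in\K[m_1,\dots,m_s]$ must be assigned the unique $t$-degree $\deg_S(w)/d$, and the binomial relations among the $m_i$ (which generate all relations of the toric algebra) are homogeneous in that degree precisely because all $m_i$ have the same $S$-degree; alternatively, one can bypass the isomorphism altogether, since your degree-by-degree count shows both graded algebras have the same Hilbert function, which already forces equal Krull dimension. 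Second, your transcendence-degree step shows the rank is to be taken over $\Q$ (the rational rank of the exponent vectors), which is the correct reading of $\rk \log(I)$ even when $\K$ has positive characteristic, e.g. for $I=(x^p,y^p)$ in characteristic $p$, where the rank of the log matrix reduced mod $p$ would give the wrong answer.
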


In the setting of~\cref{l:analyticspreadrank}, we say $I$ has \emph{maximal analytic spread} if $\log(I)$ has full rank. As we will see, these log matrices will help us determine whether monomial algebras satisfy the WLP, and in view of~\cref{p:unexpectedfailure}, will be used as a tool for proving the nonexistence of unexpected sops of given total degree. 

In~\cite{H2024,H2025} the author introduced the following class of simplicial complexes that will play a key role in this and later sections.

\begin{definition}[\emph{Incidence complexes~\cite{H2024,H2025}}]
    Let $\Delta$ be a simplicial complex of dimension $d$ and $1 \leq i < d$. The $i$-th \emph{incidence complex} of $\Delta$, denoted by $\Delta(i)$ has as its vertex set the set of $(i-1)$--faces of $\Delta$, and a subset $\sigma$ of vertices is a facet of $\Delta(i)$ if and only if the vertices of $\sigma$ are the $(i-1)$--faces of an $i$-face of $\Delta$.
\end{definition}
 
We note that if $F_i, F_j$ are two facets of an incidence complex, then $|F_i \cap F_j| \leq 1$ (see~\cite[Proposition 20.2]{H2024}).
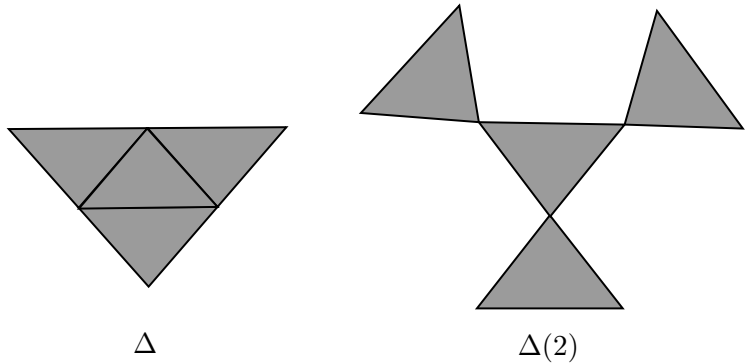
\begin{figure}[h!]

        \tikzset{every picture/.style={line width=0.75pt}} 

        \begin{tikzpicture}[x=0.75pt,y=0.75pt,yscale=-1,xscale=1]
        
        \draw  [fill={rgb, 255:red, 155; green, 155; blue, 155 }  ,fill opacity=1 ] (175.01,131.77) -- (139.74,92.01) -- (209.74,91.54) -- cycle ;
        \draw  [fill={rgb, 255:red, 155; green, 155; blue, 155 }  ,fill opacity=1 ] (105.01,132.24) -- (69.74,92.47) -- (139.74,92.01) -- cycle ;
        \draw  [fill={rgb, 255:red, 155; green, 155; blue, 155 }  ,fill opacity=1 ] (140.27,172.01) -- (105.01,132.24) -- (175.01,131.77) -- cycle ;
        \draw  [fill={rgb, 255:red, 155; green, 155; blue, 155 }  ,fill opacity=1 ] (139.57,92.2) -- (175.01,131.77) -- (105.12,132.63) -- cycle ;
        \draw  [fill={rgb, 255:red, 155; green, 155; blue, 155 }  ,fill opacity=1 ] (342.63,136) -- (379.39,183) -- (305.86,183) -- cycle ;
        \draw  [fill={rgb, 255:red, 155; green, 155; blue, 155 }  ,fill opacity=1 ] (342.81,136.6) -- (306.83,89) -- (380.34,90.22) -- cycle ;
        \draw  [fill={rgb, 255:red, 155; green, 155; blue, 155 }  ,fill opacity=1 ] (380.34,90.22) -- (396.66,32.82) -- (439.98,92.23) -- cycle ;
        \draw  [fill={rgb, 255:red, 155; green, 155; blue, 155 }  ,fill opacity=1 ] (306.83,89) -- (247.33,84.36) -- (296.98,30.14) -- cycle ;
        
        \draw (131,194) node [anchor=north west][inner sep=0.75pt]   [align=left] {$\displaystyle \Delta $};
        \draw (325,194) node [anchor=north west][inner sep=0.75pt]   [align=left] {$\displaystyle \Delta ( 2)$};
        \end{tikzpicture}
        \caption{A complex $\Delta$ on the left, and its incidence complex $\Delta(2)$ on the right.}
\end{figure}

\subsection{Collapsibility} A face $\sigma$ of a simplicial complex $\Delta$ is said to be \emph{free} if it is only contained in one other face $\tau$ of $\Delta$. An \emph{elementary collapse} of $\Delta$ by a free face $\sigma$, is the process of deleting the faces $\sigma, \tau$ from $\Delta$, resulting in a new complex $\Delta'$. In this case we say $\Delta'$ is obtained from $\Delta$ via an elementary collapse. A complex $\Delta$ is said to be \emph{collapsible} if the complex with a single vertex can be obtained from $\Delta$ via a sequence of elementary collapses. 

It is clear that if $\Delta'$ is obtained from $\Delta$ via an elementary collapse, then $\Delta$ is homotopy equivalent to $\Delta'$. This connection between the combinatorial notion of an elementary collapse, and the topological notion of homotopy equivalence is one of the many reasons collapsilibity has attracted interest from researchers in many different aspects of combinatorial topology. It follows for example that a collapsible simplicial complex is contractible. The converse however is far from true.

\begin{example}[Collapsible vs contractible complexes~\cite{Z1964}]
    The complex $\Delta$ below is a triangulation of the \emph{Dunce hat}, one of the earliest examples of a contractible but non collapsible complex.

\begin{center}
\tikzset{every picture/.style={line width=0.75pt}} 

\begin{tikzpicture}[x=0.75pt,y=0.75pt,yscale=-1,xscale=1]

\draw  [fill={rgb, 255:red, 155; green, 155; blue, 155 }  ,fill opacity=1 ] (115.17,54) -- (189,188) -- (41.33,188) -- cycle ;
\draw    (41.33,188) -- (107.33,162) ;
\draw    (67.33,139) -- (107.33,162) ;
\draw    (67.33,139) -- (109.33,137) ;
\draw    (87.33,104) -- (109.33,137) ;
\draw    (109.33,137) -- (107.33,162) ;
\draw    (107.33,162) -- (107.33,188) ;
\draw    (107.33,188) -- (133.33,162) ;
\draw    (107.33,162) -- (133.33,162) ;
\draw    (109.33,137) -- (131.33,130) ;
\draw    (131.33,130) -- (150.33,150) ;
\draw    (133.33,162) -- (150.33,150) ;
\draw    (131.33,130) -- (107.33,162) ;
\draw    (131.33,130) -- (133.33,162) ;
\draw    (131.33,130) -- (151.33,121) ;
\draw    (150.33,150) -- (151.33,121) ;
\draw    (131.33,130) -- (115.17,54) ;
\draw    (109.33,137) -- (115.17,54) ;
\draw    (133.33,162) -- (155.33,188) ;
\draw    (133.33,162) -- (189,188) ;
\draw    (150.33,150) -- (189,188) ;
\draw    (150.33,150) -- (165.33,144) ;

\draw (111,34) node [anchor=north west][inner sep=0.75pt]   [align=left] {$\displaystyle 1$};
\draw (30,188) node [anchor=north west][inner sep=0.75pt]   [align=left] {$\displaystyle 1$};
\draw (191,191) node [anchor=north west][inner sep=0.75pt]   [align=left] {$\displaystyle 1$};
\draw (51,126) node [anchor=north west][inner sep=0.75pt]   [align=left] {$\displaystyle 2$};
\draw (150,190) node [anchor=north west][inner sep=0.75pt]   [align=left] {$\displaystyle 2$};
\draw (168,130) node [anchor=north west][inner sep=0.75pt]   [align=left] {$\displaystyle 2$};
\draw (76,88) node [anchor=north west][inner sep=0.75pt]   [align=left] {$\displaystyle 3$};
\draw (152,102) node [anchor=north west][inner sep=0.75pt]   [align=left] {$\displaystyle 3$};
\draw (102,190) node [anchor=north west][inner sep=0.75pt]   [align=left] {$\displaystyle 3$};
\draw (97,162) node [anchor=north west][inner sep=0.75pt]   [align=left] {$\displaystyle 4$};
\draw (97,136) node [anchor=north west][inner sep=0.75pt]   [align=left] {$\displaystyle 5$};
\draw (119,113) node [anchor=north west][inner sep=0.75pt]   [align=left] {$\displaystyle 6$};
\draw (144,151) node [anchor=north west][inner sep=0.75pt]   [align=left] {$\displaystyle 7$};
\draw (129,163) node [anchor=north west][inner sep=0.75pt]   [align=left] {$\displaystyle 8$};

\end{tikzpicture}
\end{center}
\end{example}

\subsection{Analytic spread vs collapsibility} From topological point of view, the key property of collapsible complexes that is useful for us is the fact that the last boundary map of a collapsible complex is upper triangular (after reordering rows and columns). In a more algebraic language, we have the following lemma that allows us to use collapsibility as a tool to compute the analytic spread of incidence complexes.

\begin{lemma} 
    Let $\Delta$ be a pure collapsible $d$-dimensional simplicial complex with $f_d$ facets. Then $\F(\Delta(d))$ has maximal analytic spread, and in particular
    $$
        \ell(\F(\Delta(d))) = f_d.
    $$
\end{lemma}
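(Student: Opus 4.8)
The plan is to reduce the statement to a rank computation for the log matrix and then use collapsibility to exhibit a triangular square submatrix of full size.

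First I would unwind the combinatorics of $\Delta(d)$. Since $\Delta$ is pure $d$-dimensional, each facet $F$ of $\Delta$ is a $d$-simplex and so has exactly $d+1$ ridges; by definition these ridges form a facet of $\Delta(d)$, and distinct facets of $\Delta$ yield distinct facets of $\Delta(d)$. Hence $\F(\Delta(d))$ is generated by exactly $f_d$ squarefree monomials, all of degree $d+1$, one for each facet $F$ of $\Delta$, namely the product of the variables indexed by the ridges contained in $F$. In particular $\F(\Delta(d))$ is equigenerated, so \cref{l:analyticspreadrank} applies and gives $\ell(\F(\Delta(d))) = \rk \log(\F(\Delta(d)))$. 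The log matrix $\log(\F(\Delta(d)))$ is precisely the (unsigned) facet--ridge incidence matrix $M$ of $\Delta$: its rows are indexed by the facets of $\Delta$, its columns by the ridges, and the $(F,\tau)$ entry is $1$ if $\tau\subset F$ and $0$ otherwise. Thus the lemma is equivalent to showing that $M$ has full row rank $f_d$.

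Next I would extract an ordering and a matching from the collapse. Because $\Delta$ is pure, no facet is itself a free face (a facet has no proper coface), so in any collapse each facet must be removed as the unique coface of a free ridge, and each such elementary collapse deletes a distinct ridge. I order the facets $F_1,\dots,F_{f_d}$ by the time at which they are removed, and let $\tau_j\subset F_j$ be the free ridge paired with $F_j$. The $\tau_j$ are pairwise distinct, so they index $f_d$ distinct columns of $M$. The crucial point is the triangularity this forces: at the moment $F_j$ is collapsed the facets $F_{j+1},\dots,F_{f_d}$ are still present, and freeness of $\tau_j$ means $F_j$ is the only surviving $d$-face containing $\tau_j$; since a surviving facet retains all of its subfaces, this gives $\tau_j\not\subset F_k$ in $\Delta$ for every $k>j$.

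Finally I would read off the rank. Let $N$ be the $f_d\times f_d$ submatrix of $M$ whose columns are $\tau_1,\dots,\tau_{f_d}$, so that $N_{jk}=1$ exactly when $\tau_k\subset F_j$. The diagonal entries are $N_{jj}=1$. If $j>k$, then applying the containment relation of the previous paragraph to the ridge $\tau_k$ (which lies in no $F_l$ with $l>k$) at the index $l=j$ shows $\tau_k\not\subset F_j$, so $N_{jk}=0$. Hence $N$ is upper triangular with unit diagonal, $\det N=1$, and therefore $\rk M=f_d$; equivalently $\F(\Delta(d))$ has maximal analytic spread and $\ell(\F(\Delta(d)))=f_d$. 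The main obstacle is the third paragraph: one must argue cleanly from the definition of an elementary collapse that collapsing a \emph{pure} complex matches every $d$-face to a free ridge, and that this matching is acyclic in the sense encoded by $\tau_j\not\subset F_k$ for $k>j$. This is essentially the discrete-Morse content of collapsibility, and the care needed is only that (i) by purity the relevant cofaces of a ridge are always facets, and (ii) containment statements in the intermediate complexes agree with those in $\Delta$ because collapsing never removes a face of a surviving facet.
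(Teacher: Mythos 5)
Your proof is correct and follows essentially the same route as the paper's: both reduce to the rank of the log matrix via \cref{l:analyticspreadrank}, identify it with the facet--ridge incidence data, and use the collapse sequence to pair each facet with a distinct free ridge, producing an $f_d \times f_d$ triangular minor with unit diagonal. Your only (harmless) variation is ordering facets by removal time within an arbitrary collapse sequence and spelling out that purity forces each facet to be removed as the coface of a free ridge, where the paper instead takes the ridge--facet collapses at the start of the sequence.
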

 
\begin{proof}
    By~\cref{l:analyticspreadrank} we only need to compute the rank of the log matrix $M$ of $\F(\Delta(d))$. Let $f_{d-1}$ be the number of ridges of $\Delta$. Let $\sigma_1, \dots, \sigma_{f_d}$ be a sequence of ridges of $\Delta$ that can be taken as the start of a sequence of elementary collapses starting from $\Delta$. Let $\tau_1$ be the unique face of $\Delta$ that strictly contains $\sigma_1$, and recursively define $\tau_i$ as the unique face of $\Delta \setminus \{\sigma_1, \dots, \sigma_{i-1}, \tau_1, \dots, \tau_{i-1}\}$ that strictly contains $\sigma_i$. Note that the map $\tau_i \mapsto \sigma_i$ is an injection between the set of $d$-faces of $\Delta$ into the set of $(d-1)$--faces of $\Delta$, hence $f_d \leq f_{d-1}$.
     
    In terms of $\Delta(d)$, $\sigma_1$ is a free vertex of $\Delta(d)$, and $\sigma_i$ is a free vertex of $\Delta(d) \setminus \{\sigma_1, \dots, \sigma_{i-1}, \tau_1', \dots, \tau_{i-1}'\}$, where $\tau_j'$ is the facet of $\Delta(d)$ that corresponds to the facet $\tau_j$ of $\Delta$. In particular, consider the minor of $\log(\F(\Delta(d)))$ given by the columns corresponding to $\{\sigma_1, \dots, \sigma_{f_d}\}$, and all the rows. By construction, after permuting rows and columns we obtain an upper triangular matrix with ones in the diagonal, hence this minor guarantees that $\rk \log(\F(\Delta(d))) \geq f_d$. Since $\log(\F(\Delta(d)))$ is a $f_{d}$ by $f_{d-1}$ matrix, we conclude it has full rank. The result then follows by~\cref{l:analyticspreadrank} since $\F(\Delta(d))$ is a monomial ideal generated by elements of degree $d + 1$.
\end{proof}

Our next goal is to show that given a collapsible complex $\Delta$ of dimension $d$, $J = I_\Delta + (x_1^{a}, \dots, x_n^{a})$ and $L = x_1 + \dots + x_n \in R = \K[x_1,\dots, x_n]$, the map $\times L: (R/J)_{d(a-1)} \to (R/J)_{d(a-1) + 1}$ is surjective. The value $(d+1)(a-1)$ is chosen so that we can guarantee that every row of the corresponding matrix adds up to the same number. This property allows us to interpret the matrix as the log-matrix of an equigenerated facet ideal, and allows us to state our results in the context of the theory of Rees algebras. Before proving the statement we need to recall a construction from hypergraph theory and combinatorial topology.

In~\cite{MV2015}, Mirzakhani and Vondr\'ak introduced a family of hypergraphs based on the edgewise subdivision of a simplex~\cite{EG2000}, in order to study problems related to hypergraph colorings. Here we define a slightly more general version of their construction, which we call the \emph{$r$-fold half-hollow edgewise subdivision} of a simplicial complex $\Delta$, denoted by $\hesd(\Delta, r)$. 

Let $r > 0$ be an integer, and denote by 
$$
    \Omega_n^r = \{(a_1, \dots, a_n) \st a_1 + \dots + a_n = r \qand a_i \geq 0\} \in \R^n
$$ 
the set of lattice points in the $r$-th dilation of the standard simplex. Moreover, for a vector $\ba = (a_1, \dots, a_n) \in \R^n$, we denote by $\supp(\ba) = \{i \st a_i \neq 0\}$ the support of $\ba$. Note that for a simplicial complex $\Delta$ on vertex set $[n] = \{1, \dots, n\}$, we may view the vertices of $\Delta$ as the canonical basis $\{\be_1, \dots, \be_n\}$ of $\R^n$. 

Let $\Delta = \tuple{F_1, \dots, F_s}$ be a simplicial complex on vertex set $[n]$ such that $|F_i \cap F_j| \leq 1$. The complex $\hesd(\Delta, r)$ has vertex set $\Omega_n^r$. A collection of vectors $A \subset \Omega_n^r$ is a face of $\hesd(\Delta, r)$ if and only if the following conditions are satisfied:
\begin{enumerate}[label=({HESD}{{\arabic*}})]
    \item\label{hesd1} $\bigcup_{\ba \in A} \supp(\ba) \in \Delta$
    \item\label{hesd2} $A = \{\ba + \be_{i} \st i \in \supp(\ba)\}$ for some fixed vector $\ba = (a_1, \dots, a_n)$, such that $a_1 + \dots + a_n = r - 1$ and $a_i \geq 0$.
\end{enumerate}

\begin{figure}[h!]
    \tikzset{every picture/.style={line width=0.75pt}} 

    \begin{tikzpicture}[x=0.75pt,y=0.75pt,yscale=-1,xscale=1]
    
    \draw  [fill={rgb, 255:red, 155; green, 155; blue, 155 }  ,fill opacity=1 ] (101.35,129.25) -- (121.71,158) -- (81,158) -- cycle ;
    \draw  [fill={rgb, 255:red, 155; green, 155; blue, 155 }  ,fill opacity=1 ] (142.06,129.25) -- (162.42,158) -- (121.71,158) -- cycle ;
    \draw  [fill={rgb, 255:red, 155; green, 155; blue, 155 }  ,fill opacity=1 ] (121.71,100.5) -- (142.06,129.25) -- (101.35,129.25) -- cycle ;
    \draw  [fill={rgb, 255:red, 155; green, 155; blue, 155 }  ,fill opacity=1 ] (182.77,129.25) -- (203.12,158) -- (162.42,158) -- cycle ;
    \draw  [fill={rgb, 255:red, 155; green, 155; blue, 155 }  ,fill opacity=1 ] (162.42,100.5) -- (182.77,129.25) -- (142.06,129.25) -- cycle ;
    \draw  [fill={rgb, 255:red, 155; green, 155; blue, 155 }  ,fill opacity=1 ] (142.06,71.75) -- (162.42,100.5) -- (121.71,100.5) -- cycle ;
    \draw  [fill={rgb, 255:red, 155; green, 155; blue, 155 }  ,fill opacity=1 ] (223.48,129.25) -- (243.83,158) -- (203.12,158) -- cycle ;
    \draw  [fill={rgb, 255:red, 155; green, 155; blue, 155 }  ,fill opacity=1 ] (203.12,100.5) -- (223.48,129.25) -- (182.77,129.25) -- cycle ;
    \draw  [fill={rgb, 255:red, 155; green, 155; blue, 155 }  ,fill opacity=1 ] (182.77,71.75) -- (203.12,100.5) -- (162.42,100.5) -- cycle ;
    \draw  [fill={rgb, 255:red, 155; green, 155; blue, 155 }  ,fill opacity=1 ] (162.42,43) -- (182.77,71.75) -- (142.06,71.75) -- cycle ;

    \end{tikzpicture}
    \caption{The $4$-fold half-hollow edgewise subdivision of a $2$-simplex}
    \end{figure}
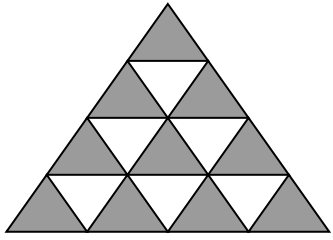

\begin{example}[The $\hesd$ construction for graphs]\label{ex:graph}
    Let $r > 0$ and $G$ be a graph with vertex set $[n]$ and edge set $E$. Condition~\ref{hesd1} implies edges of $\hesd(G, r)$ are of the form $\ba = a_i \be_i + a_j \be_j, \bb = b_i \be_i + b_j \be_j$ for $\{i, j\} \in E$. Condition~\ref{hesd2} then implies $\hesd(G, r)$ is obtained from $G$ by replacing every edge $\{i,j\}$ of $G$ by a path on $r + 1$ vertices with end points $i, j$. 
\end{example}
 
The collapsible property can be understood in terms of incidence complexes as follows.

\begin{proposition}\label{t:collapsiblespread}
    Let $\Delta$ be a $d$-dimensional pure collapsible simplicial complex and $r > 0$. Then $\F(\hesd(\Delta(d), r))$ has maximal analytic spread.
\end{proposition}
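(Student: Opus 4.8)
The plan is to reduce the statement to a rank computation and then exhibit an explicit invertible square submatrix of the log matrix, exactly as in the preceding lemma but now accounting for the subdivision parameter $r$. By~\cref{l:analyticspreadrank}, since $\F(\hesd(\Delta(d), r))$ is generated in the single degree $d+1$, it suffices to show that $M = \log(\F(\hesd(\Delta(d), r)))$ has full row rank, i.e.\ rank equal to its number of rows; as the analytic spread of an ideal never exceeds its number of generators, this is the same as having maximal analytic spread. The rows of $M$ are indexed by the facets of $\hesd(\Delta(d), r)$, which are the top-dimensional ``upward'' simplices $A_{F, \ba} = \{\ba + \be_i \st i \in F\}$, one for each facet $F$ of $\Delta(d)$ and each $\ba \in \N^n$ with $\supp(\ba) \subseteq F$ and $\sum_i a_i = r - 1$; the columns are indexed by the vertices $v \in \Omega_n^r$ occurring in some facet, and the entry of $M$ at $(A_{F,\ba}, v)$ is $1$ precisely when $v \in A_{F, \ba}$. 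I would prove full row rank by choosing, for each facet $A_{F, \ba}$, a distinguished column and ordering the facets so that the resulting square submatrix is triangular with $1$'s on the diagonal.

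First I would recall the combinatorial data produced by collapsibility in the proof of the preceding lemma: an ordering $\tau_1', \dots, \tau_{f_d}'$ of the facets of $\Delta(d)$ together with distinct vertices $\sigma_1, \dots, \sigma_{f_d}$ (the free ridges of $\Delta$) such that $\sigma_i \in \tau_i'$ while $\sigma_i \notin \tau_j'$ for every $j > i$. For the facet $A_{\tau_i', \ba}$ I would select the distinguished vertex $v(i, \ba) = \ba + \be_{\sigma_i}$, which lies in $A_{\tau_i', \ba}$ because $\sigma_i \in \tau_i'$, so every diagonal entry is $1$. I would then order the facets lexicographically, first by increasing $i$ and then, within a fixed $\tau_i'$, by \emph{decreasing} value of the $\sigma_i$-th coordinate $a_{\sigma_i}$.

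The heart of the argument is to check that this submatrix is upper triangular, which splits into a cross-facet and a within-facet verification. For the cross-facet part, if $v(i', \ba') = \ba' + \be_{\sigma_{i'}} \in A_{\tau_i', \ba}$ then its support lies in $\tau_i'$; in particular $\sigma_{i'} \in \tau_i'$, and the freeness property forces $i \le i'$, so off-diagonal incidences across facets only occur when the column's facet index is no smaller than the row's. The same support computation, combined with the incidence-complex bound $|\tau_i' \cap \tau_{i'}'| \le 1$ and the distinctness of the $\sigma_i$, shows that the map $(i, \ba) \mapsto v(i, \ba)$ is injective, so the selected columns are genuinely distinct. For the within-facet part, which is the genuinely new ingredient beyond the $r = 1$ case of the preceding lemma, I would note that $v(i, \ba') \in A_{\tau_i', \ba}$ forces $\ba = \ba' + \be_{\sigma_i} - \be_k$ for some $k \in \tau_i'$; when $k \ne \sigma_i$ this gives $a_{\sigma_i} = a'_{\sigma_i} + 1$, so the $\sigma_i$-coordinate of the row strictly exceeds that of the column, which is exactly what the secondary (decreasing) ordering requires. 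Combining the two verifications, every off-diagonal nonzero entry has its column weakly after its row in the chosen order, so the submatrix is upper triangular with unit diagonal; its determinant is $\pm 1$, hence $M$ has full row rank and $\F(\hesd(\Delta(d), r))$ has maximal analytic spread.

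I expect the within-facet triangularity — essentially the statement that the log matrix of the half-hollow edgewise subdivision of a single simplex already has full rank, governed by the potential $a_{\sigma_i}$ — to be the main obstacle, since it has no analogue in the unsubdivided ($r=1$) case; the delicate bookkeeping point is the compatibility of the two orderings (increasing facet index against decreasing coordinate), and the inequality $|F_i \cap F_j| \le 1$ for facets of incidence complexes is what simultaneously guarantees injectivity of the column selection and that distinct facets of $\Delta(d)$ cannot yield the same subdivided simplex.
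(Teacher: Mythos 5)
Your proposal is correct and follows essentially the same approach as the paper: reduce to the rank of the log matrix via \cref{l:analyticspreadrank}, use the free ridges $\sigma_1, \dots, \sigma_{f_d}$ from the collapsing sequence to select for each facet $A_{\tau_i', \ba}$ the distinguished vertex $\ba + \be_{\sigma_i}$, and order rows and columns so that the resulting square submatrix is triangular with unit diagonal. The only difference is organizational: the paper first proves the single-simplex case using the lexicographic order (whose leading comparison is exactly your $a_{\sigma_i}$-coordinate comparison) and then inducts along the collapse sequence, whereas you run one global two-level ordering; these are the same argument.
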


\begin{proof}
    By~\cref{l:analyticspreadrank} and since $\hesd(\Delta(d), r)$ is pure, the analytic spread of $\F(\hesd(\Delta(d), r))$ is equal to the rank of the log matrix $M$ of $\F(\hesd(\Delta(d), r))$. We will show that under the assumptions of the theorem, this matrix is upper triangular with nonzero diagonal entries, and hence must have full rank.
  
    We begin by showing that for a simplex $\Gamma$ on $n$ vertices, the log matrix $N$ of the facet ideal of the complex $\hesd(\Gamma, r) = \tuple{F_1, \dots, F_s}$ is upper triangular. In order to show this, we need to find an injection $\psi$ from the facets of $\hesd(\Gamma, r)$ to the vertices of $\hesd(\Gamma, r)$ such that (after reordering the facets of $\hesd(\Gamma, r)$) we have $\psi(F_i) = v_i \not \in F_j$ for $j > i$. Since by~\ref{hesd2} the set of facets of $\hesd(\Gamma, r)$ is $\Omega_n^{r-1}$ and the set of vertices is $\Omega_n^{r}$, we may define our injection $\psi = \psi_1$ as 
    \begin{equation}\label{eq:psimap}
        \psi_1(\ba) = \ba + \be_1,        
    \end{equation}
    where the ordering of facets of $\hesd(\Gamma, r)$ is the lex order on the elements of $\Omega_n^{r-1}$, in other words, $\ba <_{\mbox{lex}} \bb$ if and only if there exists $t$ such that $a_i = b_i$ for all $i < t$ and $a_t < b_t$. Under these assumptions, $\psi(\ba)$ is a vertex of a facet corresponding to $\ba'$ if and only if $\ba' = \ba + \be_j$ for some $j$. As the facets are ordered lexicographically (as elements of $\Omega_n^{r-1}$), if $\ba + \be_1 = \bb + \be_j$ for some $j > 1$, we conclude $\ba <_{\mbox{lex}} \bb$, in other words, $\psi$ satisfies the desired property.
    
    Now for an arbitrary $d$-dimensional pure collapsible complex $\Delta$, let $\sigma_1, \dots, \sigma_s$ be a sequence of $(d-1)$--faces such that after performing elementary collapses using the $\sigma_i$, the resulting complex is a $(d-1)$-dimensional complex. To see that the log matrix $M$ of $\F(\hesd(\Delta(d), r))$ is upper triangular (after reordering rows and columns), for a facet $T$ of $\Delta(d)$, let 
    $$
        C_T = \{\tau \st \cup_{\ba \in \tau}\supp(\ba) = T  \mbox{ and } \tau \mbox{ a facet of $\hesd(\Delta(d), r)$}\}.
    $$
    Then the previous arguments show that for $T_1$, the unique facet of $\Delta$ containing $\sigma_1$, there exists an injection $\psi_1$ from facets in $C_{T_1}$, to vertices of $\hesd(\Delta(d), r)$, where we reorder the vertices to guarantee that $\be_1$ is the coordinate corresponding to $\sigma_1$. After deleting every face of $\hesd(\Delta(d), r)$ that is contained only in facets of $C_{T_1}$, we obtain the complex $\hesd(\Delta'(d), r)$, where $\Delta'$ is the complex obtained after the first elementary collapse involving $\sigma_1$. We may now repeat the argument for the unique facet $T_2$ of $\Delta'$ that contains $\sigma_2$. Inductively, since the ridges $\sigma_1, \dots, \sigma_s$ were chosen such that the sequence of elementary collapses using them yields a $(d-1)$--dimensional complex, we conclude the matrix is upper triangular.     
\end{proof}

\begin{figure}[h]

    \tikzset{every picture/.style={line width=0.75pt}} 

    \begin{tikzpicture}[x=0.75pt,y=0.75pt,yscale=-1,xscale=1]
    
    \draw  [fill={rgb, 255:red, 155; green, 155; blue, 155 }  ,fill opacity=1 ] (171,131.05) -- (136.06,91) -- (206.06,91.11) -- cycle ;
    \draw  [fill={rgb, 255:red, 155; green, 155; blue, 155 }  ,fill opacity=1 ] (101,130.95) -- (66.06,90.89) -- (136.06,91) -- cycle ;
    \draw  [fill={rgb, 255:red, 155; green, 155; blue, 155 }  ,fill opacity=1 ] (135.94,171) -- (101,130.95) -- (171,131.05) -- cycle ;
    \draw  [fill={rgb, 255:red, 155; green, 155; blue, 155 }  ,fill opacity=1 ] (135.9,91.2) -- (171,131.05) -- (101.11,131.34) -- cycle ;
    \draw  [fill={rgb, 255:red, 155; green, 155; blue, 155 }  ,fill opacity=1 ] (249.21,56.5) -- (266.42,80) -- (232,80) -- cycle ;
    \draw  [fill={rgb, 255:red, 155; green, 155; blue, 155 }  ,fill opacity=1 ] (283.62,56.5) -- (300.83,80) -- (266.42,80) -- cycle ;
    \draw  [fill={rgb, 255:red, 155; green, 155; blue, 155 }  ,fill opacity=1 ] (266.42,33) -- (283.62,56.5) -- (249.21,56.5) -- cycle ;
    \draw  [fill={rgb, 255:red, 155; green, 155; blue, 155 }  ,fill opacity=1 ] (353.62,103.35) -- (336.21,80) -- (370.62,79.69) -- cycle ;
    \draw  [fill={rgb, 255:red, 155; green, 155; blue, 155 }  ,fill opacity=1 ] (319.21,103.65) -- (301.79,80.31) -- (336.21,80) -- cycle ;
    \draw  [fill={rgb, 255:red, 155; green, 155; blue, 155 }  ,fill opacity=1 ] (336.63,127) -- (319.21,103.65) -- (353.62,103.35) -- cycle ;
    \draw  [fill={rgb, 255:red, 155; green, 155; blue, 155 }  ,fill opacity=1 ] (388.21,55.5) -- (405.42,79) -- (371,79) -- cycle ;
    \draw  [fill={rgb, 255:red, 155; green, 155; blue, 155 }  ,fill opacity=1 ] (422.62,55.5) -- (439.83,79) -- (405.42,79) -- cycle ;
    \draw  [fill={rgb, 255:red, 155; green, 155; blue, 155 }  ,fill opacity=1 ] (405.42,32) -- (422.62,55.5) -- (388.21,55.5) -- cycle ;
    \draw  [fill={rgb, 255:red, 155; green, 155; blue, 155 }  ,fill opacity=1 ] (319.21,150.5) -- (336.42,174) -- (302,174) -- cycle ;
    \draw  [fill={rgb, 255:red, 155; green, 155; blue, 155 }  ,fill opacity=1 ] (353.62,150.5) -- (370.83,174) -- (336.42,174) -- cycle ;
    \draw  [fill={rgb, 255:red, 155; green, 155; blue, 155 }  ,fill opacity=1 ] (336.42,127) -- (353.62,150.5) -- (319.21,150.5) -- cycle ;
    \draw    (266.42,33) -- (266.83,50) ;
    \draw [shift={(266.83,50)}, rotate = 88.6] [color={rgb, 255:red, 0; green, 0; blue, 0 }  ][fill={rgb, 255:red, 0; green, 0; blue, 0 }  ][line width=0.75]      (0, 0) circle [x radius= 3.35, y radius= 3.35]   ;
    \draw [shift={(266.42,33)}, rotate = 88.6] [color={rgb, 255:red, 0; green, 0; blue, 0 }  ][fill={rgb, 255:red, 0; green, 0; blue, 0 }  ][line width=0.75]      (0, 0) circle [x radius= 3.35, y radius= 3.35]   ;
    \draw    (283.62,56.5) -- (284.04,73.5) ;
    \draw [shift={(284.04,73.5)}, rotate = 88.6] [color={rgb, 255:red, 0; green, 0; blue, 0 }  ][fill={rgb, 255:red, 0; green, 0; blue, 0 }  ][line width=0.75]      (0, 0) circle [x radius= 3.35, y radius= 3.35]   ;
    \draw [shift={(283.62,56.5)}, rotate = 88.6] [color={rgb, 255:red, 0; green, 0; blue, 0 }  ][fill={rgb, 255:red, 0; green, 0; blue, 0 }  ][line width=0.75]      (0, 0) circle [x radius= 3.35, y radius= 3.35]   ;
    \draw    (232,80) -- (249.62,73.5) ;
    \draw [shift={(249.62,73.5)}, rotate = 339.76] [color={rgb, 255:red, 0; green, 0; blue, 0 }  ][fill={rgb, 255:red, 0; green, 0; blue, 0 }  ][line width=0.75]      (0, 0) circle [x radius= 3.35, y radius= 3.35]   ;
    \draw [shift={(232,80)}, rotate = 339.76] [color={rgb, 255:red, 0; green, 0; blue, 0 }  ][fill={rgb, 255:red, 0; green, 0; blue, 0 }  ][line width=0.75]      (0, 0) circle [x radius= 3.35, y radius= 3.35]   ;
    \draw    (321.83,87) -- (301.79,80.31) ;
    \draw [shift={(301.79,80.31)}, rotate = 198.47] [color={rgb, 255:red, 0; green, 0; blue, 0 }  ][fill={rgb, 255:red, 0; green, 0; blue, 0 }  ][line width=0.75]      (0, 0) circle [x radius= 3.35, y radius= 3.35]   ;
    \draw [shift={(321.83,87)}, rotate = 198.47] [color={rgb, 255:red, 0; green, 0; blue, 0 }  ][fill={rgb, 255:red, 0; green, 0; blue, 0 }  ][line width=0.75]      (0, 0) circle [x radius= 3.35, y radius= 3.35]   ;
    \draw    (336.21,80) -- (355.83,87) ;
    \draw [shift={(355.83,87)}, rotate = 19.63] [color={rgb, 255:red, 0; green, 0; blue, 0 }  ][fill={rgb, 255:red, 0; green, 0; blue, 0 }  ][line width=0.75]      (0, 0) circle [x radius= 3.35, y radius= 3.35]   ;
    \draw [shift={(336.21,80)}, rotate = 19.63] [color={rgb, 255:red, 0; green, 0; blue, 0 }  ][fill={rgb, 255:red, 0; green, 0; blue, 0 }  ][line width=0.75]      (0, 0) circle [x radius= 3.35, y radius= 3.35]   ;
    \draw    (340.83,110) -- (319.21,103.65) ;
    \draw [shift={(319.21,103.65)}, rotate = 196.36] [color={rgb, 255:red, 0; green, 0; blue, 0 }  ][fill={rgb, 255:red, 0; green, 0; blue, 0 }  ][line width=0.75]      (0, 0) circle [x radius= 3.35, y radius= 3.35]   ;
    \draw [shift={(340.83,110)}, rotate = 196.36] [color={rgb, 255:red, 0; green, 0; blue, 0 }  ][fill={rgb, 255:red, 0; green, 0; blue, 0 }  ][line width=0.75]      (0, 0) circle [x radius= 3.35, y radius= 3.35]   ;
    \draw    (371,79) -- (388.21,67.25) ;
    \draw [shift={(388.21,67.25)}, rotate = 325.67] [color={rgb, 255:red, 0; green, 0; blue, 0 }  ][fill={rgb, 255:red, 0; green, 0; blue, 0 }  ][line width=0.75]      (0, 0) circle [x radius= 3.35, y radius= 3.35]   ;
    \draw [shift={(371,79)}, rotate = 325.67] [color={rgb, 255:red, 0; green, 0; blue, 0 }  ][fill={rgb, 255:red, 0; green, 0; blue, 0 }  ][line width=0.75]      (0, 0) circle [x radius= 3.35, y radius= 3.35]   ;
    \draw    (388.21,55.5) -- (405.42,43.75) ;
    \draw [shift={(405.42,43.75)}, rotate = 325.67] [color={rgb, 255:red, 0; green, 0; blue, 0 }  ][fill={rgb, 255:red, 0; green, 0; blue, 0 }  ][line width=0.75]      (0, 0) circle [x radius= 3.35, y radius= 3.35]   ;
    \draw [shift={(388.21,55.5)}, rotate = 325.67] [color={rgb, 255:red, 0; green, 0; blue, 0 }  ][fill={rgb, 255:red, 0; green, 0; blue, 0 }  ][line width=0.75]      (0, 0) circle [x radius= 3.35, y radius= 3.35]   ;
    \draw    (405.42,79) -- (422.62,67.25) ;
    \draw [shift={(422.62,67.25)}, rotate = 325.67] [color={rgb, 255:red, 0; green, 0; blue, 0 }  ][fill={rgb, 255:red, 0; green, 0; blue, 0 }  ][line width=0.75]      (0, 0) circle [x radius= 3.35, y radius= 3.35]   ;
    \draw [shift={(405.42,79)}, rotate = 325.67] [color={rgb, 255:red, 0; green, 0; blue, 0 }  ][fill={rgb, 255:red, 0; green, 0; blue, 0 }  ][line width=0.75]      (0, 0) circle [x radius= 3.35, y radius= 3.35]   ;
    \draw    (319.21,150.5) -- (319.62,167.5) ;
    \draw [shift={(319.62,167.5)}, rotate = 88.6] [color={rgb, 255:red, 0; green, 0; blue, 0 }  ][fill={rgb, 255:red, 0; green, 0; blue, 0 }  ][line width=0.75]      (0, 0) circle [x radius= 3.35, y radius= 3.35]   ;
    \draw [shift={(319.21,150.5)}, rotate = 88.6] [color={rgb, 255:red, 0; green, 0; blue, 0 }  ][fill={rgb, 255:red, 0; green, 0; blue, 0 }  ][line width=0.75]      (0, 0) circle [x radius= 3.35, y radius= 3.35]   ;
    \draw    (353.62,150.5) -- (354.04,167.5) ;
    \draw [shift={(354.04,167.5)}, rotate = 88.6] [color={rgb, 255:red, 0; green, 0; blue, 0 }  ][fill={rgb, 255:red, 0; green, 0; blue, 0 }  ][line width=0.75]      (0, 0) circle [x radius= 3.35, y radius= 3.35]   ;
    \draw [shift={(353.62,150.5)}, rotate = 88.6] [color={rgb, 255:red, 0; green, 0; blue, 0 }  ][fill={rgb, 255:red, 0; green, 0; blue, 0 }  ][line width=0.75]      (0, 0) circle [x radius= 3.35, y radius= 3.35]   ;
    \draw    (336.63,127) -- (337.04,144) ;
    \draw [shift={(337.04,144)}, rotate = 88.6] [color={rgb, 255:red, 0; green, 0; blue, 0 }  ][fill={rgb, 255:red, 0; green, 0; blue, 0 }  ][line width=0.75]      (0, 0) circle [x radius= 3.35, y radius= 3.35]   ;
    \draw [shift={(336.63,127)}, rotate = 88.6] [color={rgb, 255:red, 0; green, 0; blue, 0 }  ][fill={rgb, 255:red, 0; green, 0; blue, 0 }  ][line width=0.75]      (0, 0) circle [x radius= 3.35, y radius= 3.35]   ;
    
    \draw (131,191) node [anchor=north west][inner sep=0.75pt]   [align=left] {$\displaystyle \Delta $};
    \draw (310,189) node [anchor=north west][inner sep=0.75pt]   [align=left] {$\displaystyle \hesd(\Delta ( d) ,\ 2)$};
    \draw (261,11) node [anchor=north west][inner sep=0.75pt]   [align=left] {$\displaystyle 1$};
    \draw (219,84) node [anchor=north west][inner sep=0.75pt]   [align=left] {$\displaystyle 2$};
    \draw (287,38) node [anchor=north west][inner sep=0.75pt]   [align=left] {$\displaystyle 3$};
    \draw (304,58) node [anchor=north west][inner sep=0.75pt]   [align=left] {$\displaystyle 4$};
    \draw (338,57) node [anchor=north west][inner sep=0.75pt]   [align=left] {$\displaystyle 5$};
    \draw (306,106) node [anchor=north west][inner sep=0.75pt]   [align=left] {$\displaystyle 6$};
    \draw (364,57) node [anchor=north west][inner sep=0.75pt]   [align=left] {$\displaystyle 7$};
    \draw (377,33) node [anchor=north west][inner sep=0.75pt]   [align=left] {$\displaystyle 8$};
    \draw (399,83) node [anchor=north west][inner sep=0.75pt]   [align=left] {$\displaystyle 9$};
    \draw (344.83,118) node [anchor=north west][inner sep=0.75pt]   [align=left] {$\displaystyle 10$};
    \draw (298,132) node [anchor=north west][inner sep=0.75pt]   [align=left] {$\displaystyle 11$};
    \draw (362,144) node [anchor=north west][inner sep=0.75pt]   [align=left] {$\displaystyle 12$};

    \end{tikzpicture}
    \caption{The complex $\Delta$ on the left, and the complex $\hesd(\Delta(2), 2)$ on the right. The lines from vertices to facets correspond to the injection from~\cref{t:collapsiblespread} that shows the log matrix of $\F(\hesd(\Delta(2), 2))$ is upper triangular. Note that for every line, the vertex $i$ is contained only in facets corresponding to lower values of $i$, for example, the vertex $3$ is also contained in the facet containing the vertex $1$.}    
    \label{f:1}
\end{figure}
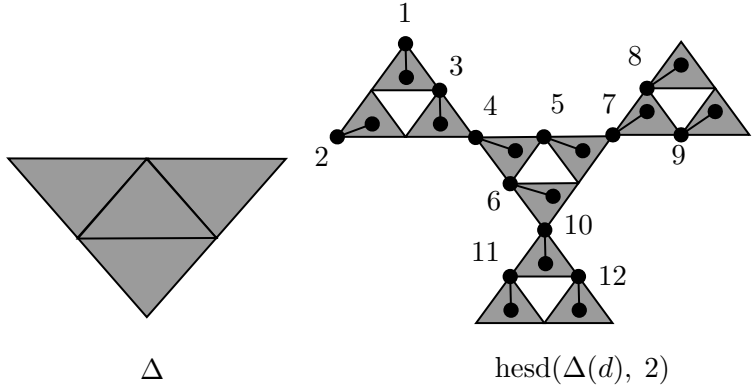

Our next goal is to show that given a $d$-dimensional complex $\Delta$, the log matrix of $\F(\hesd(\Delta(d), a - 1))$ represents multiplication maps in algebras of the form $A_\Delta(a) = \frac{\K[x_1,\dots, x_n]}{I_\Delta + (x_1^a, \dots, x_n^a)}$.

\begin{theorem}\label{p:highermatrices}
    Let $\Delta$ be a $d$-dimensional pure simplicial complex, $a > 1$ an integer, $L = x_1 + \dots + x_n \in R = \K[x_1,\dots, x_n]$ and $J = I_\Delta + (x_1^a, \dots, x_n^a)$. Then the matrix representing the map $\times L: (R/J)_{d(a-1)} \to (R/J)_{d(a-1) + 1}$ is the log matrix of $\F(\hesd(\Delta(d), a-1))$.
\end{theorem}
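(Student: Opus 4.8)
The plan is to exhibit explicit monomial bases for the source and target of $\times L$ together with explicit orderings of the vertices and facets of $\hesd(\Delta(d),a-1)$, so that the two matrices coincide entry by entry. First I would record the bases. A monomial $x^\bb$ survives in $R/J$ precisely when $\supp(\bb)\in\Delta$ and every exponent is at most $a-1$; since $\Delta$ is pure of dimension $d$, a standard monomial of degree $d(a-1)$ must have support of size $d$ or $d+1$, so it is supported either on a ridge (forcing $x^\bb=\prod_{i\in\sigma}x_i^{a-1}$) or on a facet with all exponents in $\{1,\dots,a-1\}$, while a standard monomial of degree $d(a-1)+1$ is always supported on a full facet with all exponents in $\{1,\dots,a-1\}$. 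Because $L=\sum_i x_i$, the coefficient of a target monomial $m'$ in $L\cdot m$ is the number of $i$ with $m'=x_i m$; as $\deg m'-\deg m=1$ this equals $1$ when $m\mid m'$ and $0$ otherwise. Hence the matrix of $\times L$ is the $0/1$ matrix recording, for each target monomial $m'$ and source monomial $m$, whether $m\mid m'$, and I must match this with the facet--vertex incidence matrix $\log(\F(\hesd(\Delta(d),a-1)))$, which is also $0/1$ since a facet ideal is squarefree.

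Next I would set up the two bijections. Writing $\tau_j=F\setminus\{j\}$ for the ridge of a facet $F$ opposite the vertex $j$, I identify a vertex $\bu$ of $\hesd(\Delta(d),a-1)$ (a lattice point with coordinates indexed by the $f_{d-1}$ ridges of $\Delta$, nonnegative, summing to $a-1$, and with $\supp(\bu)\in\Delta(d)$) with the source monomial $\prod_\tau x_\tau^{u_\tau}$, where $x_\tau=\prod_{i\in\tau}x_i$; this has degree $\sum_\tau d\,u_\tau=d(a-1)$ and every exponent $\sum_{\tau\ni j}u_\tau\le a-1$, so it is standard. The inverse sends a facet-supported monomial with exponents $b_j$ to the weights $u_{\tau_j}=(a-1)-b_j$ and a ridge power $x_\sigma^{a-1}$ to $(a-1)\be_\sigma$; here I would check that the support constraint $\supp(\bu)\subseteq\{\tau_j:j\in F\}$ forces this assignment, so the map is a bijection onto the source basis. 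Dually, after unwinding the construction, the facets of $\hesd(\Delta(d),a-1)$ are exactly the sets $\{\mathbf{c}+\be_{\tau_j}:j\in F\}$ for a facet $F$ of $\Delta$ and a weight $\mathbf{c}$ supported on the ridges of $F$ with $\sum_\tau c_\tau=a-2$, and I identify such a facet with the target monomial $\prod_{j\in F}x_j^{(a-1)-c_{\tau_j}}$, again checking this is a bijection onto the target basis.

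The heart of the argument is then a short exponent computation matching incidence with divisibility. For a vertex $\bu=\mathbf{c}+\be_{\tau_k}$ of the facet indexed by $(\mathbf{c},F)$, the exponent of $x_j$ in its source monomial is $\beta_j=\sum_{\tau\ni j}u_\tau=(a-2)-c_{\tau_j}+[\,j\ne k\,]$, while the exponent in the target monomial is $b_j=(a-1)-c_{\tau_j}$; subtracting gives $b_j-\beta_j=[\,j=k\,]\ge 0$, so the source monomial divides the target, with quotient $x_k$. A counting argument then closes the converse: every target monomial, being supported on a facet of size $d+1$, is divisible by exactly $d+1$ source monomials, which is precisely the number of vertices of the corresponding facet, so those $d+1$ divisors are exactly the source monomials of its vertices. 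I expect the main obstacle to be the bookkeeping for low-dimensional supports: a ridge power $x_\sigma^{a-1}$ corresponds to a vertex $(a-1)\be_\sigma$ that may lie in several facets of $\hesd(\Delta(d),a-1)$ because $\sigma$ lies in several facets of $\Delta$, and one must verify that this multiplicity is mirrored exactly by the several target monomials $x_k\,x_\sigma^{a-1}$ (one for each facet $F=\sigma\cup\{k\}$) that $x_\sigma^{a-1}$ divides. Making precise that a facet of $\hesd(\Delta(d),a-1)$ is pinned down not by $\mathbf{c}$ alone but by $\mathbf{c}$ together with the facet of $\Delta$ housing $\supp(\mathbf{c})$ — using that $|F_i\cap F_j|\le 1$ in $\Delta(d)$ so that this facet is unique whenever $|\supp(\mathbf{c})|\ge 2$ — is the delicate point, and it is exactly what makes the facet--target correspondence a genuine bijection.
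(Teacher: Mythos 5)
Your proposal is correct and takes essentially the same approach as the paper's proof: your vertex correspondence $\bu \mapsto \prod_{\tau} x_\tau^{u_\tau}$ is exactly the inverse of the paper's map $\varphi$ (which sends a standard monomial $\omega_F/(x_{j_1}^{c_1}\cdots x_{j_s}^{c_s})$ of degree $d(a-1)$ to $c_1\be_{F\setminus j_1}+\dots+c_s\be_{F\setminus j_s}$), and your facet correspondence $(\mathbf{c},F)\leftrightarrow \prod_{j\in F}x_j^{(a-1)-c_{\tau_j}}$ together with the divisibility-equals-incidence check reproduces the paper's identification of the rows of the multiplication matrix with the facets of $\hesd(\Delta(d),a-1)$. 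The differences are purely organizational: the paper first observes that each row of the matrix has exactly $d+1$ entries equal to $1$, hence is the log matrix of some pure $d$-dimensional complex, and then matches that complex with $\hesd(\Delta(d),a-1)$, whereas you compare entries directly and close the converse by counting the $d+1$ standard divisors of a target monomial; both treatments must handle the same delicate case of ridge powers $x_\sigma^{a-1}$.
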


\begin{proof}
    Let $A_\Delta(a) = R/J$, $t = d(a-1)$ and $M$ be the matrix representing the map $\times L: A_\Delta(a)_{t} \to A_\Delta(a)_{t + 1}$. Note that nonzero monomials of $A_\Delta(a)_{t + 1}$ are of the form $m = x_{i_1}^{b_1}\dots x_{i_{d+1}}^{b_{d+1}}$, where $\{i_1, \dots, i_{d+1}\} \in \Delta$, $b_1 + \dots + b_{d+1} = t + 1$ and $b_j < a$ for all $j$. In particular, both conditions together imply $b_j > 0$ for every $j$. We then conclude $L\circ m$ is a sum of $d+1$ monomials of degree $t$ for every nonzero monomial $m \in A_\Delta(a)_{t+1}$. In other words, the row of $M$ corresponding to the monomial $m$ has $d+1$ nonzero entries, and they are all $1$. This implies $M$ is the log matrix of the facet ideal of a pure complex $\Gamma$ of dimension $d$.

    Our next goal is to show that $\Gamma = \hesd(\Delta(d), a - 1)$. Note that vertices of $\Gamma$ correspond to columns of $M$ and hence are labeled by nonzero monomials in $A_\Delta(a)_{t}$, and the facets of $\Gamma$ correspond to nonzero monomials in $A_\Delta(a)_{t + 1}$.

    In order to show the equality $\Gamma = \hesd(\Delta(d), a - 1)$, we first need to show a correspondence between vertices of $\Gamma$ and vertices of $\hesd(\Delta(d), a - 1)$ which by definition is $\Omega_{f_{d-1}}^{a-1} \subset \R^{f_{d-1}}$. For a facet $F$ of $\Delta$, set $\omega_F = \prod_{i \in F} x_i^{a-1}$. For every nonzero monomial $m \in A_\Delta(a)_t$, there exists a facet $F$ such that we can write $m$ as $m = \frac{\omega_F}{x_{j_1}^{c_1}\dots x_{j_s}^{c_s}}$, where $c_1 + \dots + c_s = a - 1$. Now, there is a bijection between ridges of $F$ and vertices of $F$ given by $v \mapsto F \setminus v$. Since ridges of $F$ are also ridges of $\Delta$, we define a function from the vertices of $\Gamma$ (which are monomials in $A_\Delta(a)_t$) to the vertices of $\hesd(\Delta(d), a - 1)$ given by 
    $$
        \varphi: V(\Gamma) \to \Omega_{f_{d - 1}}^{a-1}, \quad  \varphi\Big(\frac{\omega_F}{x_{j_1}^{c_1}\dots x_{j_s}^{c_s}}\Big) = c_1\be_{F \setminus j_1} + \dots + c_s \be_{F \setminus j_s},
    $$
    where $\be_{\tau}$ is the element in the canonical basis of $\R^{f_{d-1}}$ corresponding to the ridge $\tau$ of $\Delta$. We note that even though it is possible for a monomial $m$ to have two different representations $\frac{\omega_{F'}}{x_{j_1}^{c_1}\dots x_{j_s}^{c_s}} = \frac{\omega_{F''}}{x_{k_1}^{q_1}\dots x_{k_\ell}^{q_\ell}} = m \in A_\Delta(a)_t$, since the degree of the denominator is $a-1$ and the support of $m$ is a face of $\Delta$, this only happens if $m = \prod_{i \in \tau} x_i^{a-1}$ for some ridge $\tau = F' \setminus v' = F'' \setminus v''$ of $\Delta$. In this case, $\varphi(m) = (a-1)\be_{\tau}$ and so $\varphi$ is well defined. The map $\varphi$ is a surjection since for every vector $\ba = c_1 \be_{F \setminus j_1} + \dots + c_s \be_{F \setminus j_s} \in \Omega_{f_{d-1}}^{a-1}$, the monomial $m = \frac{\omega}{x_{j_1}^{c_1}\dots x_{j_s}^{c_s}} \in A_\Delta(a)_t$ is such that $\varphi(m) = \ba$.    
    To see that $\varphi$ is an injection, note that if $\varphi(m) = \varphi(m') = \ba = c_1 \be_{F \setminus j_1} + \dots + c_s \be_{F \setminus F_{j_s}}$, then we have $\supp(m) = \supp(m')$ and in particular $m = \frac{\omega_F}{x_{j_1}^{c_1}\dots x_{j_s}^{c_s}} = m'$.

    For a correspondence between facets of $\Gamma$ (which are monomials in $A_\Delta(a)_{t+1}$) and facets of $\hesd(\Delta(d), a - 1)$, note that every facet of $\Gamma$ is given by the set of monomials $\{\frac{m}{x_{i}} \st x_i \mid m \}$, where $m \in A_\Delta(a)_{t+1}$. Similarly to the case of monomials of degree $t$ described above, every monomial $m$ of degree $t + 1$ can be written as $m = \frac{\omega_F}{x_{j_1}^{c_1}\dots x_{j_s}^{c_s}}$ for some facet $F$ of $\Delta$, where $c_1 + \dots + c_s = a - 2$. Denoting by $\ba_m$ the vector associated to the monomial $m$ of degree $t + 1$ we see that facets of $\Gamma$ correspond to sets of the form $\{\ba_m + \be_{F \setminus i} \st i \in F\}$, and since $\ba_m$ is a vector with positive entries adding up to $a -2$, and $\supp(a)$ by definition is the set of ridges of a facet of $\Delta$, we conclude facets of $\Gamma$ correspond exactly to the sets of vectors satisfying~\ref{hesd1},~\ref{hesd2}.
    
    We have shown that there is a bijection between vertices of $\Gamma$ and $\hesd(\Delta(d), a - 1)$ that preserves facets, in particular both complexes have the same log matrices and so the result follows.
\end{proof}

\begin{figure}
    \begin{center}

\tikzset{every picture/.style={line width=0.75pt}} 

\begin{tikzpicture}[x=0.75pt,y=0.75pt,yscale=-1,xscale=1]

\draw  [fill={rgb, 255:red, 155; green, 155; blue, 155 }  ,fill opacity=1 ] (244.02,69.62) -- (277.05,110.49) -- (211,110.49) -- cycle ;
\draw  [fill={rgb, 255:red, 155; green, 155; blue, 155 }  ,fill opacity=1 ] (310.07,69.62) -- (343.09,110.49) -- (277.05,110.49) -- cycle ;
\draw  [fill={rgb, 255:red, 155; green, 155; blue, 155 }  ,fill opacity=1 ] (277.05,28.74) -- (310.07,69.62) -- (244.02,69.62) -- cycle ;
\draw  [fill={rgb, 255:red, 155; green, 155; blue, 155 }  ,fill opacity=1 ] (444.4,151.11) -- (410.97,110.49) -- (477.01,109.95) -- cycle ;
\draw  [fill={rgb, 255:red, 155; green, 155; blue, 155 }  ,fill opacity=1 ] (378.36,151.64) -- (344.93,111.03) -- (410.97,110.49) -- cycle ;
\draw  [fill={rgb, 255:red, 155; green, 155; blue, 155 }  ,fill opacity=1 ] (411.78,192.25) -- (378.35,151.64) -- (444.39,151.09) -- cycle ;
\draw  [fill={rgb, 255:red, 155; green, 155; blue, 155 }  ,fill opacity=1 ] (510.76,67.88) -- (543.79,108.75) -- (477.74,108.75) -- cycle ;
\draw  [fill={rgb, 255:red, 155; green, 155; blue, 155 }  ,fill opacity=1 ] (576.81,67.88) -- (609.83,108.75) -- (543.79,108.75) -- cycle ;
\draw  [fill={rgb, 255:red, 155; green, 155; blue, 155 }  ,fill opacity=1 ] (543.79,27) -- (576.81,67.88) -- (510.76,67.88) -- cycle ;
\draw  [fill={rgb, 255:red, 155; green, 155; blue, 155 }  ,fill opacity=1 ] (378.35,233.12) -- (411.38,274) -- (345.33,274) -- cycle ;
\draw  [fill={rgb, 255:red, 155; green, 155; blue, 155 }  ,fill opacity=1 ] (444.4,233.12) -- (477.42,274) -- (411.38,274) -- cycle ;
\draw  [fill={rgb, 255:red, 155; green, 155; blue, 155 }  ,fill opacity=1 ] (411.38,192.25) -- (444.4,233.12) -- (378.35,233.12) -- cycle ;

\draw (257.8,47.99) node [anchor=north west][inner sep=0.75pt]  [font=\small,color={rgb, 255:red, 0; green, 0; blue, 255 }  ,opacity=1 ] [align=left] {$\displaystyle a^{2} b^{2} d$};
\draw (224.8,88.99) node [anchor=north west][inner sep=0.75pt]  [font=\small,color={rgb, 255:red, 0; green, 0; blue, 255 }  ,opacity=1 ] [align=left] {$\displaystyle a^{2} bd^{2}$};
\draw (289.8,89.99) node [anchor=north west][inner sep=0.75pt]  [font=\small,color={rgb, 255:red, 0; green, 0; blue, 255 }  ,opacity=1 ] [align=left] {$\displaystyle ab^{2} d^{2}$};
\draw (359.8,114.99) node [anchor=north west][inner sep=0.75pt]  [font=\small,color={rgb, 255:red, 0; green, 0; blue, 255 }  ,opacity=1 ] [align=left] {$\displaystyle b^{2} d^{2} e$};
\draw (426.8,114.99) node [anchor=north west][inner sep=0.75pt]  [font=\small,color={rgb, 255:red, 0; green, 0; blue, 255 }  ,opacity=1 ] [align=left] {$\displaystyle b^{2} de^{2}$};
\draw (393.8,154.99) node [anchor=north west][inner sep=0.75pt]  [font=\small,color={rgb, 255:red, 0; green, 0; blue, 255 }  ,opacity=1 ] [align=left] {$\displaystyle bd^{2} e^{2}$};
\draw (493.8,87.99) node [anchor=north west][inner sep=0.75pt]  [font=\small,color={rgb, 255:red, 0; green, 0; blue, 255 }  ,opacity=1 ] [align=left] {$\displaystyle b^{2} ce^{2}$};
\draw (526.8,47.99) node [anchor=north west][inner sep=0.75pt]  [font=\small,color={rgb, 255:red, 0; green, 0; blue, 255 }  ,opacity=1 ] [align=left] {$\displaystyle b^{2} c^{2} e$};
\draw (560.8,88.99) node [anchor=north west][inner sep=0.75pt]  [font=\small,color={rgb, 255:red, 0; green, 0; blue, 255 }  ,opacity=1 ] [align=left] {$\displaystyle bc^{2} e^{2}$};
\draw (390.8,210.99) node [anchor=north west][inner sep=0.75pt]  [font=\small,color={rgb, 255:red, 0; green, 0; blue, 255 }  ,opacity=1 ] [align=left] {$\displaystyle d^{2} e^{2} f$};
\draw (358.8,251.99) node [anchor=north west][inner sep=0.75pt]  [font=\small,color={rgb, 255:red, 0; green, 0; blue, 255 }  ,opacity=1 ] [align=left] {$\displaystyle d^{2} ef^{2}$};
\draw (425.8,253.99) node [anchor=north west][inner sep=0.75pt]  [font=\small,color={rgb, 255:red, 0; green, 0; blue, 255 }  ,opacity=1 ] [align=left] {$\displaystyle de^{2} f^{2}$};
\draw (265,15) node [anchor=north west][inner sep=0.75pt]  [font=\scriptsize,color={rgb, 255:red, 208; green, 2; blue, 27 }  ,opacity=1 ] [align=left] {$\displaystyle a^{2} b^{2}$};
\draw (183,113) node [anchor=north west][inner sep=0.75pt]  [font=\scriptsize,color={rgb, 255:red, 208; green, 2; blue, 27 }  ,opacity=1 ] [align=left] {$\displaystyle a^{2} d^{2}$};
\draw (345,97) node [anchor=north west][inner sep=0.75pt]  [font=\scriptsize,color={rgb, 255:red, 208; green, 2; blue, 27 }  ,opacity=1 ] [align=left] {$\displaystyle b^{2} d^{2}$};
\draw (218,55) node [anchor=north west][inner sep=0.75pt]  [font=\scriptsize,color={rgb, 255:red, 208; green, 2; blue, 27 }  ,opacity=1 ] [align=left] {$\displaystyle a^{2} bd$};
\draw (312,55) node [anchor=north west][inner sep=0.75pt]  [font=\scriptsize,color={rgb, 255:red, 208; green, 2; blue, 27 }  ,opacity=1 ] [align=left] {$\displaystyle ab^{2} d$};
\draw (262,113) node [anchor=north west][inner sep=0.75pt]  [font=\scriptsize,color={rgb, 255:red, 208; green, 2; blue, 27 }  ,opacity=1 ] [align=left] {$\displaystyle abd^{2}$};
\draw (397,96) node [anchor=north west][inner sep=0.75pt]  [font=\scriptsize,color={rgb, 255:red, 208; green, 2; blue, 27 }  ,opacity=1 ] [align=left] {$\displaystyle b^{2} de$};
\draw (446.39,154.09) node [anchor=north west][inner sep=0.75pt]  [font=\scriptsize,color={rgb, 255:red, 208; green, 2; blue, 27 }  ,opacity=1 ] [align=left] {$\displaystyle bde^{2}$};
\draw (354,154) node [anchor=north west][inner sep=0.75pt]  [font=\scriptsize,color={rgb, 255:red, 208; green, 2; blue, 27 }  ,opacity=1 ] [align=left] {$\displaystyle bd^{2} e$};
\draw (352,219) node [anchor=north west][inner sep=0.75pt]  [font=\scriptsize,color={rgb, 255:red, 208; green, 2; blue, 27 }  ,opacity=1 ] [align=left] {$\displaystyle d^{2} ef$};
\draw (446,219) node [anchor=north west][inner sep=0.75pt]  [font=\scriptsize,color={rgb, 255:red, 208; green, 2; blue, 27 }  ,opacity=1 ] [align=left] {$\displaystyle de^{2} f$};
\draw (400,277) node [anchor=north west][inner sep=0.75pt]  [font=\scriptsize,color={rgb, 255:red, 208; green, 2; blue, 27 }  ,opacity=1 ] [align=left] {$\displaystyle def^{2}$};
\draw (487,54) node [anchor=north west][inner sep=0.75pt]  [font=\scriptsize,color={rgb, 255:red, 208; green, 2; blue, 27 }  ,opacity=1 ] [align=left] {$\displaystyle b^{2} ce$};
\draw (578,53) node [anchor=north west][inner sep=0.75pt]  [font=\scriptsize,color={rgb, 255:red, 208; green, 2; blue, 27 }  ,opacity=1 ] [align=left] {$\displaystyle bc^{2} e$};
\draw (533,111) node [anchor=north west][inner sep=0.75pt]  [font=\scriptsize,color={rgb, 255:red, 208; green, 2; blue, 27 }  ,opacity=1 ] [align=left] {$\displaystyle bce^{2}$};
\draw (533,13) node [anchor=north west][inner sep=0.75pt]  [font=\scriptsize,color={rgb, 255:red, 208; green, 2; blue, 27 }  ,opacity=1 ] [align=left] {$\displaystyle b^{2} c^{2}$};
\draw (611.83,111.75) node [anchor=north west][inner sep=0.75pt]  [font=\scriptsize,color={rgb, 255:red, 208; green, 2; blue, 27 }  ,opacity=1 ] [align=left] {$\displaystyle c^{2} e^{2}$};
\draw (454,95) node [anchor=north west][inner sep=0.75pt]  [font=\scriptsize,color={rgb, 255:red, 208; green, 2; blue, 27 }  ,opacity=1 ] [align=left] {$\displaystyle b^{2} e^{2}$};
\draw (418,186) node [anchor=north west][inner sep=0.75pt]  [font=\scriptsize,color={rgb, 255:red, 208; green, 2; blue, 27 }  ,opacity=1 ] [align=left] {$\displaystyle d^{2} e^{2}$};
\draw (479,260) node [anchor=north west][inner sep=0.75pt]  [font=\scriptsize,color={rgb, 255:red, 208; green, 2; blue, 27 }  ,opacity=1 ] [align=left] {$\displaystyle e^{2} f^{2}$};
\draw (318,259) node [anchor=north west][inner sep=0.75pt]  [font=\scriptsize,color={rgb, 255:red, 208; green, 2; blue, 27 }  ,opacity=1 ] [align=left] {$\displaystyle d^{2} f^{2}$};

\end{tikzpicture}
    \end{center}
    \caption{A labeling of the vertices and facets of $\hesd(\Delta(2), 2)$, where $\Delta$ is the complex from~\cref{f:1} and $\F(\Delta) = (abd, bde, bce, def)$, that shows the log-matrix of $\F(\hesd(\Delta(2), 2))$ is the multiplication map $\times L: A_\Delta(3)_{4} \to A_\Delta(3)_5$. Note that applying $\times L^T$ to the monomial label of a triangle yields the sum of the labels of its vertices. As an example, the triangle labeled by $a^2 b^2 d$ corresponds to the vector $\ba = \be_{\{a,b\}}$ and its vertices labeled by $a^2 b^2$, $a^2 b d$ and $a b^2 d$ correspond to the vectors $2\be_{\{a,b\}}$, $\be_{\{a,b\}} + \be_{\{a,d\}}$ and $\be_{\{a,b\}} + \be_{\{b,d\}}$.}
\end{figure}
\cref{t:collapsiblespread,p:highermatrices} imply the following.

\begin{corollary}[\emph{A setting where unexpected sops do not exist}]\label{c:nonexistence}
    Let $\Delta$ be a $d$-dimensional Cohen-Macaulay collapsible complex. Then for a fixed $a > 0$, there does not exist an $a$-unexpected system of parameters of $\Delta$ of total degree $t > d(a-1)$.
\end{corollary}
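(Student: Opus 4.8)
The plan is to argue by contradiction, extracting from \cref{p:highermatrices} and \cref{t:collapsiblespread} the statement that for a collapsible complex multiplication by $L = x_1 + \dots + x_n$ is \emph{surjective} in every degree $\geq d(a-1)$, which directly contradicts the failure of surjectivity forced by an unexpected sop through \cref{p:unexpectedfailure}. Since a Cohen-Macaulay complex is pure, the hypotheses of both \cref{p:highermatrices} and \cref{t:collapsiblespread} are met. The case $a = 1$ is degenerate and should be dispatched first: there $A_\Delta(1) = R/\m$, so condition~\ref{i:4} would force $\m \subseteq I_\Delta + (\theta_1, \dots, \theta_{d+1})$, giving socle degree $0$ and contradicting~\ref{i:2} for any $t > 0 = d(a-1)$. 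So I would assume $a > 1$. Now suppose $\theta_1, \dots, \theta_{d+1}$ is an $a$-unexpected sop of $\Delta$ of total degree $t > d(a-1)$, and write $A = A_\Delta(a)$. Applying \cref{p:unexpectedfailure} with $\ba = (a, \dots, a)$ shows that $A$ fails the WLP due to surjectivity in degree $t-1$; that is, $\times L \colon A_{t-1} \to A_t$ is not surjective. The goal is to contradict this.

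Next I would establish surjectivity at the bottom of the range. By \cref{p:highermatrices} the matrix representing $\times L \colon A_{d(a-1)} \to A_{d(a-1)+1}$ is the log matrix of $\F(\hesd(\Delta(d), a-1))$; its rows are indexed by the facets of $\hesd(\Delta(d), a-1)$, which form a basis of the target $A_{d(a-1)+1}$, and its columns are indexed by the vertices, which form a basis of the source $A_{d(a-1)}$. By \cref{t:collapsiblespread} this ideal has maximal analytic spread, so by \cref{l:analyticspreadrank} the log matrix has full rank. The injection from facets to vertices built in the proof of \cref{t:collapsiblespread} shows that the number of rows is at most the number of columns, so full rank here is full \emph{row} rank, i.e.\ the rank equals $\dim_\K A_{d(a-1)+1}$. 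Therefore $\times L \colon A_{d(a-1)} \to A_{d(a-1)+1}$ is surjective.

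Finally I would propagate surjectivity upward and close the contradiction. Since $A$ is standard graded, so is $A/(L)$, and surjectivity of $\times L \colon A_{d(a-1)} \to A_{d(a-1)+1}$ is equivalent to $(A/(L))_{d(a-1)+1} = 0$. As $(A/(L))_{m+1} = (A/(L))_1 \cdot (A/(L))_m$ in any standard graded algebra, one vanishing graded piece forces $(A/(L))_m = 0$ for all $m \geq d(a-1)+1$, equivalently $\times L \colon A_{m-1} \to A_m$ is surjective for every $m \geq d(a-1)+1$. Since $t > d(a-1)$ gives $t-1 \geq d(a-1)$, taking $m = t$ shows $\times L \colon A_{t-1} \to A_t$ is surjective, contradicting the first paragraph.

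The main obstacle is not a deep difficulty but a matter of bookkeeping: one must confirm that ``maximal analytic spread'' translates into \emph{surjectivity} rather than injectivity of $\times L$, which rests on the inequality $\dim_\K A_{d(a-1)+1} \leq \dim_\K A_{d(a-1)}$ supplied by the collapsing injection, together with the fact that $A/(L)$ is generated in degree one so that surjectivity at a single degree propagates to all higher degrees. All of the genuinely technical content has already been absorbed into \cref{p:highermatrices} and \cref{t:collapsiblespread}, so the corollary itself is essentially an assembly of these two results with \cref{p:unexpectedfailure}.
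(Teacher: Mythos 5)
Your proof is correct and follows essentially the same route as the paper's: it combines \cref{p:highermatrices}, \cref{l:analyticspreadrank} and \cref{t:collapsiblespread} to get surjectivity of $\times L$ in degree $d(a-1)$, invokes \cref{p:unexpectedfailure} for the failure of surjectivity in degree $t-1$, and propagates surjectivity upward (where the paper cites \cite[Proposition 2.1]{MMN2011}, you prove the same fact inline via $A/(L)$ being standard graded). Your explicit dispatch of the degenerate case $a=1$ and your verification that full rank of the log matrix means full \emph{row} rank are careful touches the paper leaves implicit, but they do not change the argument.
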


\begin{proof}
    Let $J = I_\Delta + (x_1^a, \dots, x_n^a), \subset R = \K[x_1,\dots, x_n]$ and $L = x_1 + \dots + x_n$. By~\cref{p:highermatrices,l:analyticspreadrank} we have 
    $$
        \rk \times L: (R/J)_{d(a-1)} \to (R/J)_{d(a-1) + 1} = \ell(\F(\hesd(\Delta(d), a - 1))).
    $$
    \cref{t:collapsiblespread} then implies the map is surjective.
 
    If $\theta_1, \dots, \theta_{d+1}$ is an $a$-unexpected sop of $\Delta$ of total degree $t$, the socle degree of the algebra $\frac{R}{I_\Delta + (\theta_1, \dots, \theta_{d+1})}$ is $t$ and so by~\cref{p:unexpectedfailure} a top degree generator $F$ in the inverse system of $I_\Delta + (\theta_1, \dots, \theta_{d+1})$ corresponds to a nonzero element in the kernel of $\times L^T: (R/J)_{t} \to (R/J)_{t - 1}$, which in turn implies the map $\times L: (R/J)_{t-1} \to (R/J)_{t}$ is not surjective. This is a contradiction since once a map $\times L: (R/J)_{i} \to (R/J)_{i+1}$ is surjective, every multiplication map $\times L: (R/J)_j \to (R/J)_{j+1}$ for $j \geq i$ is also surjective (see for example~\cite[Proposition 2.1]{MMN2011}).
\end{proof}

\begin{remark}
    We note that even though~\cref{c:nonexistence} can be stated (and proved) without the notion of analytic spread, the advantage of emphasizing this connection is that certain relationships between Lefschetz properties and other topics in commutative algebra such as symbolic powers and ideals of linear type become more apparent. Such connections are made explicit for example in~\cite{H2025}.
\end{remark}

\cref{c:nonexistence} does not guarantee that a collapsible complex does not have unexpected sops, but we do not know of a collapsible complex that has one. 

The following example is motivated by~\cite[Example 3.7]{HN2025}. We do not include the definition of shellable complexes here, since the only property we need is the well-known fact that a contractible shellable complex is collapsible (see for example~\cite[Section 1.1.3]{BZ2011} and~\cite[Section 11]{B1995}).

\begin{example}[\emph{Can a collapsible complex have an unexpected sop?}]
    Let 
    $$
        I = (x_1 x_4, x_1 x_5, x_2 x_4, x_2 x_5, x_3 x_4, x_3 x_5, x_4 x_5) + (x_1 y_1, \dots, x_5 y_5) \in \K[x_1,\dots, x_5, y_1, \dots, y_5]
    $$
    and $\Delta$ the Stanley-Reisner complex of $I$. It is known that $\Delta$ is a shellable~\cite[Theorem 1.5]{DE2009} simplicial ball~\cite[Section 3]{M2011}, and hence it must also be collapsible. Macaulay2~\cite{M2} computations show that the Hilbert functions of $J = I + (x_1^4,\dots, x_5^4, y_1^4, \dots, y_5^4)$ and $J + L$, where $L = x_1 + \dots + x_5 + y_1 + \dots + y_5$ attain the following values:
    $$
    \begin{array}{c|cccccccccccc} 
        i & 0 & 1 & 2 &3 &4 & 5 & 6 & 7 &8 & 9 & 10 & \\
        \hline
        \HF(i, \frac{R}{J}) &1&10 & 43& 126& 285& 520& 793& 1026& 1134 & 1076 & 870 \\
        \HF(i, \frac{R}{J + L}) & 1 & 9 & 33& 83& 159& 235& 273& 233& 108 & 7 & 0
        \end{array}
    $$
    In particular, we conclude $\dim K = 7$, where $K = \ker \times L^T: (R/J)_{9} \to (R/J)_{8}$. Note that~\cref{p:highermatrices,t:collapsiblespread} imply the map $\times L: (R/J)_{12} \to (R/J)_{13}$ is surjective.

    Let $F_1, \dots, F_7$ be a basis of $K$. If $\Delta$ has an $4$-unexpected sop $\theta_1, \dots, \theta_{5}$, it must have total degree $9$. Since $\Delta$ is not a $\K$-homology sphere, previous arguments involving the unique Macaulay dual generator do not apply. Instead, the only information we have is that one of the generators of the inverse system of $I + (\theta_1, \dots, \theta_5)$ (if it exists) is a linear combination of $F_1, \dots, F_7$. The other generators could have lower degree, and hence a priori do not imply failure of Lefschetz, as they are in the kernel of a map that can be surjective (the values of the Hilbert function are increasing). 
\end{example}

\begin{remark}
    Throughout this section we dealt with collapsible complexes. Every result in this section holds if we assume that $\Delta$ is a $d$-dimensional complex that collapses down to a $(d-1)$--dimensional complex.
\end{remark}

Finally, we note that the questions we consider about WLP of monomial algebras can be viewed as more general cases of questions about Gorenstein ideals and of polynomials $F$ such that 
$$
    \nabla \cdot  F = \frac{\partial F}{\partial x_1} + \dots + \frac{\partial F}{\partial x_n} = 0.
$$
Below we state one version of the results implied by our perspective in these special cases.
\begin{corollary}\label{c:basecase}
    Let $\K$ be a field of characteristic $0$, $a > 0$ an integer, $F \in R = \K[x_1,\dots,x_n]$ be a polynomial such that $\nabla \cdot F = 0$ and $I \subset R$ an artinian Gorenstein ideal. 
    \begin{enumerate}
        \item If $F$ is a sum of monomials not divisible by $x_i^a$ for any $i$, then $\deg F \leq \frac{n(a - 1)}{2}$.
        \item If $x_i^a \in I$ for every $i$ and $L = x_1 + \dots + x_n \in I$, the socle degree of $I$ is at most $\frac{n(a-1)}{2}$.
    \end{enumerate}
\end{corollary}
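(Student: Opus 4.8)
The plan is to deduce both statements from the characteristic-zero weak Lefschetz property of the monomial complete intersection $A = R/(x_1^a, \dots, x_n^a)$, after translating the hypotheses on $F$ into a statement about the multiplication map $\times L \colon A_{t-1} \to A_t$.

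First I would reduce (2) to (1). Since $R/I$ is artinian Gorenstein, \cref{t:macaulay} gives $I^\perp = (F)$ for a single homogeneous $F$ whose degree equals the socle degree of $R/I$. Because $x_i^a \in I$ and $L \in I$, every element of $I$ annihilates $F$ under $\bullet$, so $x_i^a \bullet F = 0$ for all $i$ and $L \bullet F = \nabla \cdot F = 0$. The first family of equalities says precisely that no monomial of $F$ is divisible by any $x_i^a$, so $F$ meets the hypotheses of (1); hence the socle degree $\deg F$ is at most $\frac{n(a-1)}{2}$, which is exactly the claim of (2).

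For (1) I may assume $F$ is homogeneous of degree $t$: the condition $\nabla \cdot F = 0$ is linear and degree preserving, and being supported on monomials with all exponents $< a$ passes to homogeneous components, so it suffices to bound the top nonzero component. The two conditions on $F$, namely $\nabla \cdot F = 0$ together with each exponent $< a$, say exactly that $F$ is a nonzero degree-$t$ element supported on the standard monomials of $A = R/(x_1^a,\dots,x_n^a)$ with $L \bullet F = 0$. By the differentiation analogue of \cref{l:contraction}, valid in characteristic zero, the matrix of $L \bullet$ on these standard monomials is the transpose of the matrix of $\times L \colon A_{t-1} \to A_t$; in other words $F$ is a nonzero element of $\ker(\times L^T \colon A_t \to A_{t-1})$. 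Since a linear map is surjective if and only if its transpose is injective, the existence of such an $F$ shows that $\times L \colon A_{t-1} \to A_t$ fails to be surjective (this is precisely the mechanism underlying \cref{t:sopfailure}).

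It then remains to invoke that $A = R/(x_1^a,\dots,x_n^a)$ satisfies the WLP over a field of characteristic zero, together with the fact that for monomial algebras $L = x_1 + \dots + x_n$ is itself a weak Lefschetz element~\cite[Proposition 2.2]{MMN2011}. The Hilbert series of $A$ is $(1 + x + \dots + x^{a-1})^n$, which is palindromic and unimodal of degree $n(a-1)$, so $\HF(i,A)$ is symmetric and unimodal about $c = \frac{n(a-1)}{2}$. Consequently, if $t > c$ then $\HF(t-1,A) \ge \HF(t,A)$, and full rank of $\times L \colon A_{t-1} \to A_t$ would force surjectivity, contradicting the previous paragraph; hence $t \le \frac{n(a-1)}{2}$. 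The only substantial external input is the characteristic-zero WLP (in fact the SLP) of the monomial complete intersection, which is where the real weight of the argument lies; the remaining steps are the apolarity bookkeeping of \cref{l:contraction,t:macaulay} and the palindromic unimodality of $(1 + x + \dots + x^{a-1})^n$, where the only care needed is to treat the two cases according to the parity of $n(a-1)$.
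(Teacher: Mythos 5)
Your proof is correct and takes essentially the same route as the paper's: apolarity turns $F$ into a nonzero element of $\ker\bigl(\times L^T \colon A_t \to A_{t-1}\bigr)$ for $A = R/(x_1^a,\dots,x_n^a)$, and Stanley's characteristic-zero Lefschetz property of the monomial complete intersection together with the symmetric unimodal Hilbert function $(1+x+\dots+x^{a-1})^n$ forces $t \le \frac{n(a-1)}{2}$; the paper merely runs the duality in the opposite order (proving (1) first and deducing (2) \say{similarly}), and handles your \say{differentiation analogue} step by passing from $I^\perp = (F)$ to the contraction dual generator $G$ with $I^{-1} = (G)$ before applying \cref{l:contraction}. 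One small point to tighten: the matrix of $L \bullet$ on standard monomials is the transpose of that of $\times L$ only up to invertible diagonal scaling by multinomial coefficients, which is harmless in characteristic zero and is precisely the reason for the paper's switch to the contraction dual.
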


\begin{proof}
    First note that it is known by a result of Stanley~\cite{S80} that monomial complete intersections satisfy the SLP.
    \begin{enumerate}
        \item Assume $\deg F > \frac{n(a - 1)}{2}$. Since $\nabla \cdot F = L \bullet F = 0$, we conclude there exists an ideal $I$ such that $I^{\perp} = (F)$, and by the assumptions on $F$ we have $L \in I$ and $x_i^a \in I$ for every $i$. Let $G$ be the polynomial such that $I^{-1} = (G)$ and $A = R/(x_1^a, \dots, x_n^a)$. By~\cref{l:contraction} we conclude $G$ is in the kernel of the map 
        $$
        \times L^T: A_{\deg F} \to A_{\deg F - 1}.
        $$
        The assumption on the degree of $F$ then gives us a contradiction since the socle degree of $A$ is $n(a-1)$ and $A$ satisfies the SLP.
        \item By Macaulay duality a similar argument as above works for the second statement.
    \end{enumerate}
\end{proof}

From the perspective of~\cref{c:basecase}, our questions have the restriction that the monomials with nonzero coefficient in $F$ are supported in a face of a simplicial complex $\Delta$.

\section{An application: WLP of $1$-dimensional complexes}\label{s:1dim}

In this section we exemplify how the results and ideas from~\cref{s:collapsible} can be used in order to determine the WLP of algebras of the form $A_\Delta(a) = \frac{R}{I_\Delta + (x_1^a, \dots, x_n^a)}$. Here we characterize the WLP of such algebras when $\Delta$ is a $1$-dimensional Cohen-Macaulay complex (a connected graph), and the underlying field $\K$ has characteristic zero.

Note that since $\Delta$ is a connected graph, its $f$-vector is given by $(1, v, e)$ where $v$ is the number of vertices of $\Delta$, and $e$ is the number of edges of $\Delta$. Moreover, there is a simple explicit formula for the Hilbert series of $A_\Delta(a)$.

\begin{lemma}\label{l:hilbert}
    Let $\Delta$ be a graph with $v$ vertices and $e$ edges, and $a > 0$. The Hilbert function of $A_{\Delta}(a)$ is 
    \begin{align*}
        \HF(t, A_\Delta(a)) = \begin{cases}
            1 \quad & t = 0 \\
            v + (t-1)e \quad & 0 < t < a \\
            e(2a - t - 1) \quad & a \leq t \leq 2a - 2 \\
            0 \quad & t > 2a - 2
        \end{cases}
    \end{align*}
    In particular, the socle degree of $A_G(a)$ is $2a - 2$, and 
    \begin{enumerate}
        \item If $v < e$, $\HF(a - 1, A_\Delta(a)) < \HF(a, A_\Delta(a))$
        \item If $v = e$, $\HF(a - 1, A_\Delta(a)) = \HF(a, A_\Delta(a))$
        \item If $v > e$, $\HF(a - 1, A_\Delta(a)) > \HF(a, A_\Delta(a))$
    \end{enumerate}
\end{lemma}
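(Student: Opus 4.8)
The plan is to compute the Hilbert function directly from the standard monomial basis of $A_\Delta(a)$ and then read off all three comparisons from the single difference $\HF(a-1, A_\Delta(a)) - \HF(a, A_\Delta(a))$.

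First I would recall that a $\K$-basis of $A_\Delta(a) = R/(I_\Delta + (x_1^a, \dots, x_n^a))$ is given by the monomials $x^{\bb}$ whose support $\supp(\bb)$ is a face of $\Delta$ and whose exponents satisfy $b_i \leq a-1$ for every $i$. Since $\Delta$ is a graph, its only faces are the empty set, the $v$ vertices, and the $e$ edges, so every basis monomial has support of size $0$, $1$, or $2$ (in particular, triangles and larger subsets of vertices contribute nothing, so the count depends only on $v$ and $e$). I would then count basis monomials of each degree $t$ by support size. The empty support contributes only the monomial $1$ in degree $0$. A vertex $\{i\}$ contributes exactly the monomial $x_i^t$, which is a valid basis element precisely when $1 \leq t \leq a-1$; hence the vertices together contribute $v$ in each degree $1 \leq t \leq a-1$ and $0$ for $t \geq a$.

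The main computation is the edge count. For an edge $\{i,j\}$, the basis monomials of degree $t$ supported on $\{i,j\}$ are the $x_i^p x_j^q$ with $p, q \geq 1$, $p + q = t$, and $p, q \leq a-1$; equivalently $p$ ranges over $\max(1, t-a+1) \leq p \leq \min(t-1, a-1)$. I would split into the two regimes: for $2 \leq t \leq a$ the constraints reduce to $1 \leq p \leq t-1$, giving $t-1$ monomials per edge, while for $a \leq t \leq 2a-2$ they reduce to $t-a+1 \leq p \leq a-1$, giving $2a-t-1$ monomials per edge (and $0$ once $t \geq 2a-1$). Combining the vertex and edge contributions yields $v + (t-1)e$ for $0 < t < a$ and $(2a-t-1)e$ for $a \leq t \leq 2a-2$, with the value at the shared point $t = a$ agreeing between the two expressions; this also identifies $2a-2$ as the socle degree.

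The only place requiring care is the bookkeeping at the boundaries of these ranges: confirming that the vertex term disappears exactly when the edge count switches formula at $t = a$, and that the endpoints $t = a-1$, $t = a$, and $t = 2a-2$ are assigned to the correct piece. With the formula in hand, the three inequalities are immediate, since substituting gives $\HF(a-1, A_\Delta(a)) = v + (a-2)e$ and $\HF(a, A_\Delta(a)) = (a-1)e$, so their difference is exactly $v - e$, whose sign distinguishes the three cases. I expect no genuine obstacle beyond this careful case analysis; the degenerate case $a = 1$, where the algebra collapses to $\K$, can be excluded or noted separately, as the comparison statements presuppose a nontrivial middle range.
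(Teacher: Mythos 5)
Your proof is correct and takes essentially the same approach as the paper: enumerate the standard monomials of $A_\Delta(a)$ (supported on the empty face, a vertex, or an edge, with exponents at most $a-1$), count them by degree, and then read the three comparisons off the single difference $\HF(a-1, A_\Delta(a)) - \HF(a, A_\Delta(a)) = v + (a-2)e - (a-1)e = v - e$. Your version is just more explicit about the boundary bookkeeping and the degenerate case $a=1$, which the paper's terser proof passes over.
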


\begin{proof}
    Nonzero monomials in $A_\Delta(a)$ have one of the following two forms:
    $$
        x_i^{k} x_j^{l} \qfor 1 \leq k,l \leq a - 1 \qand x_i^c \qfor 0 \leq c \leq a - 1,
    $$ 
    A straightforward counting argument then implies the formulas for the Hilbert function. The last part of the statement follows directly since 
    $$
    \HF(a - 1, A_\Delta(a)) - \HF(a, A_\Delta(a)) = v + (a - 2)e - e(2a - a - 1) = v - e.
    $$
\end{proof}
 
Let $L = x_1 + \dots + x_n$. \cref{l:hilbert} implies the multiplication map $\times L: A_G(a)_{a-1} \to A_G(a)_a$ plays a key role in the WLP of $A_G(a)$ for an arbitrary connected graph $G$ and $a > 0$. In view of~\cref{ex:graph} we have the following variation of~\cref{p:highermatrices}.

\begin{proposition}\label{p:matrixsubdivision}
    The matrix that represents the multiplication map $\times L: A_\Delta(a)_{a} \to A_\Delta(a)_{a+1}$ is the incidence matrix of $f_1(\Delta)$ paths with $a-1$ vertices each. In particular, $\times L: A_\Delta(a)_a \to A_\Delta(a)_{a+1}$ is always surjective.
\end{proposition}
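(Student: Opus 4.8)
The plan is to compute the map by hand rather than to quote \cref{p:highermatrices} directly: that theorem describes $\times L$ in degrees $d(a-1) \to d(a-1)+1$, which for $d=1$ is the map $A_\Delta(a)_{a-1} \to A_\Delta(a)_a$, one degree below the one at hand. The essential simplification in degrees $a$ and $a+1$ is that every single-variable monomial has already vanished. Concretely, first I would record that since $x_i^a = 0$ in $A_\Delta(a)$, each nonzero monomial of degree $a$ or $a+1$ uses at least two variables; as $\Delta$ is $1$-dimensional, its faces have at most two vertices, so the support of such a monomial is \emph{exactly} an edge $\{i,j\}$ of $\Delta$. Hence every nonzero monomial in these two degrees is of the form $x_i^k x_j^l$ with $k,l \ge 1$ and is attached to a unique edge.

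Next I would read off the block structure of the matrix. Applying $\times L$ to $x_i^k x_j^l$ with $k,l \ge 1$, each term $x_m x_i^k x_j^l$ with $m \notin \{i,j\}$ vanishes because its support $\{i,j,m\}$ is not a face of the graph $\Delta$; thus $L$ acts on edge-supported monomials exactly as $x_i + x_j$. Consequently the matrix of $\times L : A_\Delta(a)_a \to A_\Delta(a)_{a+1}$ is block diagonal with one block for each of the $f_1(\Delta)$ edges.

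For a fixed edge I would then identify the block. Writing $v_k = x_i^k x_j^{a-k}$ for $1 \le k \le a-1$ for the degree-$a$ basis and $w_k = x_i^k x_j^{a+1-k}$ for $2 \le k \le a-1$ for the degree-$(a+1)$ basis, a short calculation using $x_i^a = x_j^a = 0$ yields $\times L(v_1) = w_2$, $\times L(v_{a-1}) = w_{a-1}$, and $\times L(v_k) = w_k + w_{k+1}$ for $2 \le k \le a-2$. This is exactly the incidence matrix of the path on vertices $v_1, \dots, v_{a-1}$ with edges $w_2, \dots, w_{a-1}$, in agreement with \cref{ex:graph} applied to a single edge; running over all edges of $\Delta$ produces the $f_1(\Delta)$ disjoint paths on $a-1$ vertices each asserted in the statement.

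Surjectivity then follows at once: the incidence matrix of a path (a connected bipartite graph) on $p$ vertices has rank $p-1$, equal to its number of edges (\cite[Theorem 8.2.1]{G2001}), hence full row rank; equivalently, $w_2$ is the image of $v_1$ and then $w_{k+1} = \times L(v_k) - w_k$ lies in the image inductively. As each block is surjective, so is $\times L$. I expect the one point requiring care to be the boundary bookkeeping --- tracking how $x_i^a = x_j^a = 0$ truncates the would-be two-term images of $v_1$ and $v_{a-1}$ --- since this is precisely what makes each block a path on $a-1$ vertices rather than something longer, and what decouples the blocks at the now-absent corner monomials $x_i^{a-1}$.
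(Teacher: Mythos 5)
Your proposal is correct and takes essentially the same approach as the paper: both arguments compute the matrix of $\times L$ explicitly on edge-supported monomials, identify it as the incidence matrix of $f_1(\Delta)$ disjoint paths on $a-1$ vertices each (with the truncation at $x_i^a = x_j^a = 0$ producing the path endpoints), and deduce surjectivity from the rank formula for incidence matrices of forests in \cite[Theorem 8.2.1]{G2001}. The only cosmetic difference is that the paper computes the rows of the matrix via the contraction $L \circ m$ applied to degree-$(a+1)$ monomials, whereas you compute the columns $\times L(v_k)$ directly and make the per-edge block-diagonal structure explicit.
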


\begin{proof}
    Let $m$ be a nonzero monomial in $A_G(a)_{a}$. Then $m = x_i^{a-k} x_j^{k}$ for some edge $\{i,j\}$ of $G$ and $1 \leq k \leq a - 1$. In order to compute the matrix $M$ that represents the multiplication map $\times L: A_G(a)_{a} \to A_G(a)_{a + 1}$, we compute $\times L^T(m) = L \circ m$, for nonzero monomials of degree $a + 1$ in $A_G(a)$. For $m = x_i^{a + 1 - k}x_j^{k}$ with $2 \leq k \leq a - 1$, we have $L \circ m = x_i^{a - k} x_j^{k} + x_i^{a + 1 - k} x_j^{k - 1}$, and in particular, since $M$ is a $0,1$ matrix and every row of $M$ has exactly two nonzero entries, we conclude $M$ is the incidence matrix of a graph $H$.
 
    The vertex set of $H$ is the set \
    $$
        V(H) = \{x_i^{a - k} x_j^{k} \st \{i,j\} \in G \qand 1 \leq k \leq a - 1\},
    $$
    and the edges of $H$ are the pairs $\{x_i^{a - k} x_j^{k}, x_i^{a + 1 - k} x_j^{k - 1}\}$ where $\{i, j\}$ ranges over the edges of $G$ and $2 \leq k \leq a - 2$. The graph $H$ has a connected component for every $\{i, j\} \in G$, and inside each connected component there are $a - 1$ vertices and $a - 2$ edges. More specifically, $H$ is a forest where every connected component is a path.
    
    It is a well-known fact from graph theory that the rank of the incidence matrix of a graph $H$ is equal to the difference between the number of vertices of $H$ and the number of connected components of $H$~\cite[Theorem 8.2.1]{G2001}. This implies the matrix $M$ has full rank and the multiplication map is surjective.
\end{proof}

\cref{p:matrixsubdivision} guarantees a specific map is surjective. The following result due to Hausel~\cite{H2005} guarantees the injectivity of the first half of the maps.

\begin{theorem}[{\cite[Theorem 6.2]{H2005}}]\label{t:hausel}
    Let $R = \K[x_1,\dots, x_n]$ and $A = R/I$ be a monomial artinian level algebra of socle degree $t$, where the characteristic of $\K$ is $0$. Then for $L = x_1 + \dots + x_n$, the multiplication maps 
    $$
        \times L: A_i \to A_{i + 1}
    $$
    are injective for $0 \leq i \leq \lfloor \frac{t - 1}{2} \rfloor$.
\end{theorem}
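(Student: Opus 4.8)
The plan is to realize $A$ inside a direct sum of monomial complete intersections of pure powers, for which the strong Lefschetz property is already available, and then to transport injectivity across this embedding. I would work throughout with the differential (apolarity) inverse system, which is legitimate since $\operatorname{char}\K = 0$. Because $I$ is a monomial ideal, $I^{\perp}$ is spanned by monomials of $S = \K[y_1,\dots,y_n]$, and because $A$ is \emph{level} of socle degree $t$, the module $I^{\perp}$ is generated in degree $t$. Its degree-$t$ piece is spanned by the monomials $F_1,\dots,F_c$ dual to the standard monomials $x^{b^{(1)}},\dots,x^{b^{(c)}}$ of degree $t$ whose classes span $\Soc A = A_t$; here $F_j = y^{b^{(j)}}$ with $|b^{(j)}| = t$ for every $j$.

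For each $j$ set $I^{(j)} = \{g \in R : g \bullet F_j = 0\}$, the apolar ideal of $F_j$, and $A^{(j)} = R/I^{(j)}$. A direct apolarity computation gives $I^{(j)} = (x_1^{\,b^{(j)}_1+1},\dots,x_n^{\,b^{(j)}_n+1})$, so each $A^{(j)}$ is a monomial complete intersection of pure powers with socle degree $\sum_k b^{(j)}_k = t$. Since the annihilator of a module equals the intersection of the annihilators of a generating set, $I = \{g : g \bullet I^{\perp} = 0\} = \bigcap_{j=1}^{c} I^{(j)}$, so the canonical map $\iota\colon A = R\big/\bigcap_{j} I^{(j)} \hookrightarrow \bigoplus_{j=1}^{c} R/I^{(j)} = \bigoplus_{j=1}^{c} A^{(j)}$ is an injective, degree-preserving homomorphism of graded $R$-algebras; in particular $\iota(Lf) = L\,\iota(f)$.

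Now I would invoke Stanley's theorem \cite{S80} that a monomial complete intersection of pure powers has the strong Lefschetz property with $L = x_1 + \dots + x_n$ as a strong Lefschetz element. Since each $A^{(j)}$ is such an algebra of socle degree $t$ with symmetric, unimodal Hilbert function, the map $\times L^{\,t-2i}\colon A^{(j)}_i \to A^{(j)}_{t-i}$ is an isomorphism for $0 \le i \le \lfloor \frac{t-1}{2} \rfloor$, whence its first factor $\times L\colon A^{(j)}_i \to A^{(j)}_{i+1}$ is injective in that range. Consequently $\times L\colon B_i \to B_{i+1}$ is injective for $B = \bigoplus_j A^{(j)}$ and $2i < t$. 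Because $\iota$ commutes with multiplication by $L$, the square with horizontal arrows $\times L$ and vertical arrows $\iota$ commutes, and injectivity of the bottom arrow together with injectivity of $\iota$ forces $\times L\colon A_i \to A_{i+1}$ to be injective for $0 \le i \le \lfloor \frac{t-1}{2} \rfloor$, which is exactly the assertion.

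The difficulty here is structural rather than concentrated in one hard estimate. The points requiring care are that levelness forces every socle monomial into the single top degree $t$ (this is what makes all summands $A^{(j)}$ share the socle degree $t$, producing the uniform range $2i < t$), the apolarity identity $\{g : g \bullet y^{b} = 0\} = (x_1^{\,b_1+1},\dots,x_n^{\,b_n+1})$, and the identification $I = \bigcap_j I^{(j)}$ that yields the embedding. The only genuinely external ingredient, and the sole place where characteristic zero is essential, is the strong Lefschetz property of monomial complete intersections; once $A$ is embedded into $\bigoplus_j A^{(j)}$ everything is formal. I expect the main verification effort to lie in checking that the degree-$t$ inverse-system generators are precisely the duals of the top-degree standard monomials and that $\iota$ is compatible with $\times L$.
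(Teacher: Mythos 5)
Your proof is correct, so the main thing to flag is that the paper itself contains no proof of this statement: it is imported verbatim from Hausel's paper, so the comparison is with the cited source rather than with any internal argument. Hausel's original route is quite different in spirit — the injectivity statement there is extracted from hard Lefschetz-type results in the geometry of toric hyperk\"ahler varieties, which is where level monomial algebras enter his matroid-theoretic setting. What you give instead is the standard purely algebraic proof of Hausel's theorem, well known in the Lefschetz-properties literature: levelness of a monomial algebra forces $I^{\perp}$ to be generated by the duals $y^{b^{(j)}}$ of the degree-$t$ socle monomials (here you correctly use the two facts that, by artinianness, every standard monomial divides a maximal one, and that levelness places all maximal standard monomials in degree $t$); each apolar ideal is the pure-power complete intersection $(x_1^{b^{(j)}_1+1},\dots,x_n^{b^{(j)}_n+1})$ of socle degree $t$; the identification $I=\bigcap_j I^{(j)}$ yields a degree-preserving embedding $A\hookrightarrow \bigoplus_j A^{(j)}$ commuting with $\times L$; and Stanley's theorem \cite{S80} (which the paper itself invokes, in the proof of \cref{c:basecase}) gives injectivity of $\times L$ on each summand precisely in the range $t-2i\geq 1$, i.e.\ $0\leq i\leq \lfloor \frac{t-1}{2}\rfloor$, which transports back through the embedding. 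What each approach buys: Hausel's argument explains the geometric origin of the statement and its matroid applications, while yours is elementary and self-contained modulo Stanley's SLP for monomial complete intersections — the only place characteristic zero is used, exactly as you say.

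Three small tightenings, none of which is a gap. First, the step $I=\{g: g\bullet I^{\perp}=0\}$ is Macaulay's double-annihilator theorem, but in the monomial case it admits a one-line direct check that avoids citing it: a monomial $x^{a}$ lies in $I$ iff it divides no degree-$t$ standard monomial, iff $x^{a}\in I^{(j)}$ for every $j$, and both $I$ and $\bigcap_j I^{(j)}$ are monomial ideals. Second, you do not need unimodality of the Hilbert functions of the $A^{(j)}$, only the symmetry $\dim A^{(j)}_i=\dim A^{(j)}_{t-i}$ coming from Gorensteinness, so that full rank of $\times L^{t-2i}$ means bijectivity and hence injectivity of its first factor. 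Third, when some exponent $b^{(j)}_i=0$ the variable $x_i$ lies in $I^{(j)}$, so $A^{(j)}$ is a complete intersection in fewer variables and $L$ restricts to the sum of the surviving variables; this is still a strong Lefschetz element by Stanley's result (or by the paper's reduction \cite[Proposition 2.2]{MMN2011} for monomial algebras), so the degenerate summands are harmless.
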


We are now ready to prove the main result of this section.

\begin{theorem}[\emph{The WLP of $1$-dimensional complexes}]\label{t:wlp1dim}
    Let $\Delta$ be a connected graph with $f$-vector $(1, v, e)$ and $a > 1$ a fixed integer. Then $A_\Delta(a)$ has the WLP over a field of characteristic $0$ if and only if:
    \begin{enumerate}
        \item $v > e$ or
        \item $v \leq e$ and $\hesd(\Delta, a)$ is not bipartite.
    \end{enumerate} 
\end{theorem}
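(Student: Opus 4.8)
The plan is to reduce the weak Lefschetz property of $A := A_\Delta(a)$ to the behaviour of a single multiplication map, and then to read that map off from the combinatorics of a subdivision of $\Delta$. Write $L = x_1 + \dots + x_n$. By \cref{l:hilbert} the Hilbert function of $A$ is strictly increasing on $\{0, \dots, a-1\}$ and strictly decreasing on $\{a, \dots, 2a-2\}$, and the sign of $\HF(a,A) - \HF(a-1,A) = e - v$ tells us where the peak sits. Since $\Delta$ is pure, $A$ is a monomial level algebra of socle degree $2a-2$; as $\operatorname{char}\K = 0$, \cref{t:hausel} gives that $\times L \colon A_i \to A_{i+1}$ is injective for $0 \le i \le a-2$, and on this range the Hilbert function increases, so these maps already have full rank. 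At the other end, \cref{p:matrixsubdivision} shows $\times L \colon A_a \to A_{a+1}$ is surjective, and surjectivity propagates upward (\cite[Proposition 2.1]{MMN2011}), so every $\times L \colon A_i \to A_{i+1}$ with $a \le i \le 2a-3$ has full rank. Hence $A$ has the WLP if and only if the single map
$$
    \times L \colon A_{a-1} \longrightarrow A_a
$$
has full rank.

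Next I would identify this critical map combinatorially. For a graph $\Delta$ the incidence complex $\Delta(1)$ coincides with $\Delta$ and $d(a-1) = a-1$, so \cref{p:highermatrices} with $d=1$ shows that the matrix of $\times L \colon A_{a-1} \to A_a$ is the log matrix of $\F(\hesd(\Delta, a-1))$. By \cref{ex:graph}, $\hesd(\Delta, a-1)$ is the graph obtained from $\Delta$ by replacing each edge with a path on $a$ vertices, so its facet ideal is generated in degree $2$ and this log matrix is exactly the signless incidence matrix of the connected graph $\hesd(\Delta, a-1)$, which has $N := v + (a-2)e$ vertices and $(a-1)e$ edges. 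Over a field of characteristic $0$ the rank of such an incidence matrix is $N$ if the graph is non-bipartite and $N-1$ if it is bipartite, which is precisely the rank computation already used in \cref{l:macaulaydualcolored,c:unexpectedcolored} via \cite[Theorem 8.2.1]{G2001}.

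Finally I would split into two cases. Since $\Delta$ is a connected graph we have $e \ge v-1$, so $v > e$ holds exactly when $\Delta$ is a tree ($v = e+1$). In that case $\dim A_{a-1} = v + (a-2)e > (a-1)e = \dim A_a$, so full rank means surjectivity; moreover $\hesd(\Delta, a-1)$ is then a tree, hence bipartite, with incidence rank $N-1 = (a-1)e = \dim A_a$, so the map is surjective and $A$ has the WLP, yielding branch (1). When $v \le e$ we have $\dim A_{a-1} \le \dim A_a$, so full rank means injectivity, i.e. the incidence matrix must attain rank $N = \dim A_{a-1}$; by the rank formula this happens exactly when $\hesd(\Delta, a-1)$ is non-bipartite, yielding branch (2). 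The plateau case $v=e$ is absorbed here: then $\dim A_{a-1} = \dim A_a$ and non-bipartiteness forces the map to be bijective.

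I expect the main obstacle to be the bookkeeping that isolates $\times L \colon A_{a-1} \to A_a$ as the only map that can fail full rank: this requires pinning down the exact peak position from \cref{l:hilbert}, invoking Hausel injectivity strictly below the peak, and propagating upward the surjectivity supplied by \cref{p:matrixsubdivision}, while handling the $v=e$ plateau where bijectivity (not merely injectivity or surjectivity) is demanded. Once this reduction is secured, identifying the critical matrix through \cref{p:highermatrices} and translating its rank into bipartiteness of $\hesd(\Delta, a-1)$ is routine.
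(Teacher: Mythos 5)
Your argument is correct and is essentially the paper's own proof: the same reduction of the WLP to the single map $\times L\colon A_{a-1}\to A_a$ via \cref{t:hausel} (injectivity up to degree $a-2$, using that $A_\Delta(a)$ is level of socle degree $2a-2$) and \cref{p:matrixsubdivision} together with \cite[Proposition 2.1]{MMN2011} (surjectivity from degree $a$ onward), followed by identifying the critical matrix through \cref{p:highermatrices} and computing its rank by \cite[Theorem 8.2.1]{G2001}, with the tree case $v>e$ handled exactly as in the paper.

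One point deserves attention: you state the criterion with $\hesd(\Delta,a-1)$, while the theorem as printed says $\hesd(\Delta,a)$, and your indexing is the one that is internally consistent with the paper. With the convention of \cref{ex:graph} (where $\hesd(G,r)$ replaces each edge by a path on $r+1$ vertices, hence $r$ edges), the graph whose signless incidence matrix represents $\times L\colon A_{a-1}\to A_a$ has $v+(a-2)e$ vertices and $(a-1)e$ edges, i.e.\ it is $\hesd(\Delta,a-1)$, exactly as \cref{p:highermatrices} with $d=1$ and $d(a-1)=a-1$ asserts. The two criteria genuinely differ: a cycle of length $\ell$ in $\Delta$ subdivides to one of length $r\ell$, so $\hesd(\Delta,a-1)$ is non-bipartite precisely when $a$ is even and $\Delta$ is non-bipartite, whereas $\hesd(\Delta,a)$ is non-bipartite precisely when $a$ is odd and $\Delta$ is non-bipartite. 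A direct check settles which version is right: for $\Delta$ the triangle and $a=2$, the map $\times L\colon A_1\to A_2$ is the signless incidence matrix of the triangle itself ($=\hesd(\Delta,1)$), which is invertible in characteristic $0$, so the WLP holds; here $v=e$ and $\hesd(\Delta,2)$ is a hexagon, hence bipartite, so the printed statement would wrongly predict failure. In short, your proof is sound and matches the paper's route, and the discrepancy you would notice against the literal statement of \cref{t:wlp1dim} (and the last line of the paper's proof of it) is an off-by-one slip in the subdivision parameter relative to the paper's own \cref{ex:graph} and \cref{p:highermatrices}, not a gap in your argument.
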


\begin{proof}
    First note that the socle degree of $A_\Delta(a)$ is $2a - 2$, and since $A_\Delta(a)$ is a level algebra for every $\Delta$ and $a$,~\cref{t:hausel} implies the multiplication maps $\times L: A_\Delta(a)_{i} \to A_\Delta(a)_{i + 1}$ are injective for every $1 \leq i \leq \lfloor \frac{2a - 3}{2} \rfloor = a - 2$. \cref{p:matrixsubdivision} implies the multiplication map $\times L : A_\Delta(a)_a \to A_\Delta(a)_{a + 1}$ is surjective, and hence by~\cite[Proposition 2.1]{MMN2011} every multiplication map $\times L: A_\Delta(a)_{i} \to A_\Delta(a)_{i + 1}$ is surjective for every $i \geq a$. In particular, the only map that can fail to have full rank is $\times L: A_G(a)_{a - 1} \to A_G(a)_{a}$. By~\cref{p:highermatrices},~\cref{ex:graph} and~\cite[Theorem 8.2.1]{G2001} this map has full rank if and only if either 
    \begin{enumerate}
       \item $v > e$ or
        \item $v \leq e$ and $\hesd(\Delta, a)$ is not bipartite.
    \end{enumerate}
\end{proof}

\begin{remark} 
    Note that if $\Delta$ is not connected, then the multiplication maps $\times L: A_\Delta(a)_{i} \to A_\Delta(a)_{i + 1}$ will be block matrices for every $i$, where blocks correspond to the multiplication maps of connected components. The WLP in this case then simply says that either every connected component is a tree (that is, the graph is a forest), or no connected component of $\hesd(\Delta, a)$ is bipartite. 
\end{remark}

\begin{remark}
    It turns out that the failure of WLP of algebras of the form $A_\Delta(a)$, where $\Delta$ is a connected graph is equivalent to the existence of an unexpected sop. This should be seen as the analogue of~\cite[Theorem 7.5]{CHMN2018}, where the authors show an equivalence between failure of the SLP in a specific degree, and the existence of unexpected curves. Here, instead of restricting the number of variables, we restrict the dimension of $\Delta$ in order to obtain the equivalence. A proof of this statement, as well as an explicit formula for the elements in the unexpected sop will appear in a future paper.
\end{remark}

\section{Further directions}\label{s:questions}

In this paper we introduced the concept of unexpected systems of parameters of simplicial complexes. We then explored settings where the existence (and nonexistence) of such sops can be guaranteed. We now state some of the natural questions that arise from our work.
 
One motivation for studying artinian reductions of Stanley-Reisner rings is to be able to prove inequalities of $f$ and $h$ vectors of special classes of simplicial complexes. Some of the well-known applications are the proofs of necessity for McMullen's $g$-conjecture for simplicial polytopes by Stanley~\cite{S80} and for simplicial spheres by Adiprasito~\cite{A2018}, Papadakis and Petrotou~\cite{PP2020} and Adiprasito, Papadakis and Petrotou~\cite{APP2021}.   
 
It is often the case (especially when there is no known underlying geometry), that proofs regarding Lefschetz properties of artinian reductions of face rings of simplicial spheres (and related classes) rely heavily on arguments involving general linear forms. This phenomenon leads to the interesting question (often mentioned in the literature, see for example~\cite[Open problem 7.4]{APP2021} and~\cite[Conjecture 1.3]{CJKMN2018}) of which systems of parameters of $I_\Delta$ define algebras satisfying the SLP. Systems of parameters that are combinatorially defined are natural candidates for such a question. 

The two currently known families of unexpected systems of parameters for large classes of homology spheres of arbitrary dimension are:

\begin{enumerate}
    \item The universal sop, defined by elementary symmetric polynomials~\cite{H2025B}, and 
    \item The colored sop for balanced homology spheres~(\cref{t:coloredunexpected})
\end{enumerate}

As is mentioned in~\cite[Section 8 and Question 8.1]{H2025B}, the universal sop is known to define algebras satisfying the SLP in very special cases. The SLP of colored sops of balanced spheres was already conjectured in~\cite[Conjecture 1.3]{CJKMN2018}. These remarks, together with computational evidence from Macaulay2~\cite{M2} lead us to the following question that generalizes~\cite[Question 8.1]{H2025B} and~\cite[Conjecture 1.3]{CJKMN2018}.
 
\begin{question}[\emph{Unexpected systems of parameters and the algebraic $g$-conjecture}]
    Let $\Delta$ be a $d$-dimensional $\Q$-homology sphere on vertex set $[n]$, and $\theta_1, \dots, \theta_{d+1}$ an $a$-unexpected sop of $\Delta$. Does the algebra 
    $$
        \frac{\C[x_1,\dots, x_n]}{I_\Delta + (\theta_1, \dots, \theta_{d+1})}
    $$
    satisfy the SLP?
\end{question}

As was pointed out in earlier sections, most of the arguments we use to guarantee the existence of unexpected sops for homology spheres rely heavily on the fact that the inverse system of an artinian Gorenstein ideal is generated by a single polynomial. In order to study unexpected systems of parameters of Cohen-Macaulay complexes that are not homology spheres, new techniques would have to be developed. In view of our results in~\cref{s:collapsible}, we ask the following.

\begin{question}\label{q:nonexistence}
    Let $\Delta$ be a $d$-dimensional contractible (or acyclic) complex. Can $\Delta$ have an unexpected sop?
\end{question}

It should be noted that a negative answer to~\cref{q:nonexistence} could have interesting consequences to the study of Lefschetz properties of monomial artinian reductions of squarefree monomial ideals. Every current proof that shows a given sop is unexpected can be understood from the fact that some multiplication map mimics the boundary map of a given simplicial complex $\Gamma$ defined in terms of $\Delta$, when applied to a specific set of linearly independent polynomials (see~\cite[Remark 4.3]{H2025B},~\cref{r:boundary,p:highermatrices}). A negative answer to~\cref{q:nonexistence} could then be seen as a consequence of a more general phenomenon relating multiplication and boundary maps. The first time such a connection was noticed was in~\cite{MNS2020}.

The notion of unexpectedness introduced earlier in this paper was designed to detect failure of WLP for monomial ideals due to surjectivity (as can be seen from~\cref{p:unexpectedfailure}). A natural question then is whether one could modify~\cref{d:unexpected} to be able to detect failure due to injectivity. Such a modified definition would probably involve notions of "expected" and "actual" dimensions, just as in the geometry setting~\cite{CHMN2018}. It is well known that monomial artinian reductions of Gorenstein squarefree monomial ideals can fail the WLP due to injectivity. Below we give an example where following the naive translation of the notion of unexpectedness, we can still say an unexpected sop exists.

\begin{example}[\emph{Unexpectedness beyond surjectivity}]\label{ex:injectivity}
    Let $I_\Delta = (x_1 x_2, x_3 x_4, x_5 x_6) \subset R = \K[x_1,\dots, x_6]$ where $\Delta$ is the boundary of an octahedron, $J = I_\Delta + (x_1^5, \dots, x_6^5)$ and $L = x_1 + \dots + x_6$. Then $A_\Delta(5) = \frac{R}{J}$ fails the WLP due to the map $\times L: A_\Delta(5)_{6} \to A_\Delta(5)_7$ not being injective: $\dim A_\Delta(5)_6 = 116 \leq 120 = \dim A_{\Delta}(5)_7$, and the kernel of the transpose is $5$-dimensional. A Macaulay2 computation gives us a basis $F_1, \dots, F_5$ for this kernel. Let $I_i$ denote the ideal such that $(I_i)^{-1} = (F_i)$, where $F_i$ is viewed as an element of $R$. We have the following information regarding the inverse systems of $F_1, \dots, F_5$:
    
    \begin{enumerate}
        \item $I_1$ has $6$ generators of degrees $1,1,2,2,3,4$,
        \item $I_2$ has $6$ generators of degrees $1,2,2,2,3,3$,
        \item $I_3$ has $6$ generators of degrees $1,1,2,2,3,4$,
        \item $I_4$ has $12$ generators, and 
        \item $I_5$ has $6$ generators of degrees $1,1,2,2,3,4$.
    \end{enumerate}
    
    In particular, out of the $5$ generators of the kernel basis, we conclude $4$ are the Macaulay dual generator of a complete intersection. Again a Macaulay2~\cite{M2} computation shows
    \begin{equation}\label{eq:sop}
        I_2 = I_\Delta + (\theta_1, \theta_2, \theta_3),        
    \end{equation} 
    where $\theta_1 = L$, $\theta_2 = x_5^3 + x_6^3$ and $\theta_3 = 4\,x_{3}^{3}+4\,x_{4}^{3}+6\,x_{3}^{2}x_{5}+6\,x_{4}^{2}x_{5}+9\,x_{3}x_{5}^{2}+9\,x_{4}x_{5}^{2}+6\,x_{3}^{2}x_{6}+6\,x_{4}^{2}x_{6}+9\,x_{3}x_{6}^{2}+9\,x_{4}x_{6}^{2}$.
    The sop in~\eqref{eq:sop} satisfies~\ref{i:1},~\ref{i:3} and~\ref{i:4} in~\cref{d:unexpected}. If instead of~\ref{i:5}, we consider the inequality 
    $$
        \dim \ker \times L^{T}: A_\Delta(5)_6 \to A_\Delta(5)_7 > \max(\dim A_\Delta(5)_7 - \dim A_\Delta(5)_6, 0),
    $$
    then the sop in~\eqref{eq:sop} can be seen as an unexpected sop of $\Delta$.     
\end{example}

\cref{ex:injectivity} not only shows the concept of an unexpected sop still makes sense when dealing with failure due to injectivity, but it once again hints at an interesting application of unexpected sops for monomial ideals. Determining, or even being able to generate (nontrivial) homogeneous polynomials $F$ such that $(F) = I^{-1}$, where $I$ is a complete intersection is a very hard problem in commutative algebra. To the best of our knowledge, the only two cases where the characterization for $F$ is known are: when $F$ is a monomial, in which case $F$ is always the Macaulay dual generator of a complete intersection, and when $F$ is a binomial. A characterization for the latter was recently obtained in~\cite{ADFMMSV2025,DM2025}.

Instead of directly trying to obtain a characterization of the Macaulay dual generators of complete intersections, we ask the following.

\begin{question}\label{q:unexpectedCI}
    Let $F_1, \dots, F_s$ be linearly independent homogeneous polynomials of degree $t$. When is there a polynomial $F$ that is a linear combination of $F_1, \dots, F_s$ such that $F$ is the Macaulay dual generator of a complete intersection?

    More specifically, let $I \subset R = \K[x_1, \dots, x_n]$ be a (squarefree) monomial complete intersection such that $x_i^{a_i} \not \in I$ for every $a_i > 0$ and every $i$, where $\K$ is a field of characteristic zero. Assume $A = \frac{R}{I + (x_1^{a_1},\dots, x_n^{a_n})}$ fails the WLP in degree $t - 1$, where $a_i > 2$ for every $i$. Let $K$ be the kernel of the transpose of the multiplication map where $A$ fails the WLP. Is there a polynomial $F \in K$ such that $F$ is the Macaulay dual generator of a complete intersection of the form 
    $$
        I + (\theta_1, \dots, \theta_r),
    $$
    where $\theta_1, \dots, \theta_r$ is a sop of $I$?
\end{question}

Even though the second part of~\cref{q:unexpectedCI} is probably too strong to be true, we do not know of a counter example.

Combinatorially, the condition that a squarefree monomial ideal $I_\Delta$ is a complete intersection can be stated by saying $I_\Delta$ is the Stanley-Reisner ideal of a join of boundary of simplices (possibly of different dimensions). Since in this case $\Delta$ is a homology sphere, both~\cite[Theorem 1.1]{H2025B} and~\cref{t:coloredunexpected} can be applied in order to generate Macaulay dual generators of complete intersections.
  
Finally, our results show that one can use information on the Rees algebras of special facet ideals in order to generate special systems of parameters for squarefree monomial ideals. In particular, the study of Rees-theoretic properties of edgewise subdivisions (and related constructions, such as the one introduced in~\cref{s:collapsible}) of arbitrary simplicial complexes becomes an interesting problem, that to the best of our knowledge has not been explored yet. This direction should be contrasted with the results in~\cite{CJKW2018}, where the authors apply topological properties of edgewise subdivision in order to understand free resolutions of the Stanley-Reisner ideals of complexes obtained after iterated edgewise subdivisions.

\begin{acknowledgement}
I would like to thank Alexandra Seceleanu, Brian Harbourne and my advisor Sara Faridi for insightful conversations that improved the exposition of this paper and Yirong Yang for pointing me to~\cite{KN2016}.
\end{acknowledgement}

\bibliographystyle{plain}
\bibliography{bibliography.bib}

\end{document}